\theoremstyle{plain}
\newtheorem{theorem}{Theorem}
\newtheorem{lemma}{Lemma}%
\newtheorem{proposition}{Proposition}%
\newtheorem{corollary}{Corollary}%
\newtheorem{rem}{Remark}%
\newmdenv[topline=false,rightline=false,bottomline=false,nobreak=false]{proofaside}
\let\oldproposition\proposition
\renewcommand{\proposition}{%
  \crefalias{theorem}{nprop}%
  \oldproposition}
\let\oldrem\rem
\renewcommand{\rem}{%
  \crefalias{theorem}{nrem}%
  \oldrem}
\let\oldcorollary\corollary
\renewcommand{\corollary}{%
  \crefalias{theorem}{ncor}%
  \oldcorollary}
\let\oldlemma\lemma
\renewcommand{\lemma}{%
  \crefalias{theorem}{nlem}%
  \oldlemma}
\crefname{nrem}{Rem.}{Rems.}
\crefname{rem}{Rem.}{Rems.}
\crefname{theorem}{Thm.}{Thms.}
\crefname{lemma}{Lem.}{Lems.}
\crefname{nlem}{Lem.}{Lems.}
\crefname{corollary}{Cor.}{Cors.}
\crefname{ncor}{Cor.}{Cors.}
\crefname{proposition}{Prop.}{Props.}
\crefname{nprop}{Prop.}{Props.}
\crefname{assumption}{Assump.}{Assumps.}
\crefname{talign}{}{}
\crefname{section}{Sec.}{Secs.}
\crefname{appendix}{App.}{Apps.}
\crefname{equation}{}{}
\newcommand{\sig}{\sigma}
\newcommand{\eps}{\epsilon}
\newcommand{\lam}{\lambda}
\newcommand{\dee}{d}
\newcommand{\dt}{\,\dee t}
\newcommand{\dx}{\,\dee x}
\newcommand{\dy}{\,\dee y}
\def\balign#1\ealign{\begin{align}#1\end{align}}
\def\baligns#1\ealigns{\begin{align*}#1\end{align*}}
\def\balignat#1\ealign{\begin{alignat}#1\end{alignat}}
\def\balignats#1\ealigns{\begin{alignat*}#1\end{alignat*}}
\def\bitemize#1\eitemize{\begin{itemize}#1\end{itemize}}
\def\benumerate#1\eenumerate{\begin{enumerate}#1\end{enumerate}}
\newenvironment{talign*}
 {\let\displaystyle\textstyle\csname align*\endcsname}
 {\endalign}
\newenvironment{talign}
 {\let\displaystyle\textstyle\csname align\endcsname}
 {\endalign}
\def\balignst#1\ealignst{\begin{talign*}#1\end{talign*}}
\def\balignt#1\ealignt{\begin{talign}#1\end{talign}}
\newcommand{\qtext}[1]{\quad\text{#1}\quad} 
\newcommand{\stext}[1]{\ \text{#1}\ } 
\newcommand{\sstext}[1]{\ \ \text{#1}\ \ }
\let\originalleft\left
\let\originalright\right
\renewcommand{\left}{\mathopen{}\mathclose\bgroup\originalleft}
\renewcommand{\right}{\aftergroup\egroup\originalright}
\def\Cramer{Cram\'er\xspace}
\def\tinycitep*#1{{\tiny\citep*{#1}}}
\def\tinycitealt*#1{{\tiny\citealt*{#1}}}
\def\tinycite*#1{{\tiny\cite*{#1}}}
\def\smallcitep*#1{{\scriptsize\citep*{#1}}}
\def\smallcitealt*#1{{\scriptsize\citealt*{#1}}}
\def\smallcite*#1{{\scriptsize\cite*{#1}}}
\def\mbi#1{\boldsymbol{#1}} %
\def\mbb#1{\mathbb{#1}}
\def\mrm#1{\mathrm{#1}}
\def\tbf#1{\textbf{#1}}
\def\reals{\mathbb{R}} %
\def\naturals{\mathbb{N}} %
\def\<{\left\langle} %
\def\>{\right\rangle}
\def\defeq{\triangleq} %
\def\half{\frac{1}{2}}
\def\third{\frac{1}{3}}
\def\norm#1{\|{#1}\|} %
\newcommand{\infnorm}[1]{\norm{#1}_{\infty}} %
\def\indic#1{\mbb{I}\left({#1}\right)} %
\def\E{\mbb{E}} %
\def\P{\mbb{P}} %
\def\Parg#1{\P\left({#1}\right)}
\def\Var{\mrm{Var}} %
\newcommand{\Gsn}{\mathcal{N}}
\newcommand{\Ber}{\textnormal{Bernoulli}}
\newcommand{\Bin}{\textnormal{Binomial}}
\newcommand{\Unif}{\textnormal{Uniform}}
\newcommand{\eqdist}{\stackrel{d}{=}}
\newcommand{\toprob}{\stackrel{p}{\to}}
\newcommand{\toas}{\stackrel{a.s.}{\to}}
\newcommand{\eqas}{\stackrel{a.s.}{=}}
\newcommand{\leas}{\stackrel{a.s.}{\le}}
\newcommand{\geas}{\stackrel{a.s.}{\ge}}
\newcommand{\iid}{\textrm{i.i.d.}\@\xspace}
\newcommand{\dist}{\sim}
\newcommand{\distiid}{\overset{\textrm{\tiny\iid}}{\dist}}
\providecommand{\argmax}{\mathop\mathrm{arg max}} %
\def\support#1{\mathrm{support}({#1})}
\newtheorem{assumption}{Assumption}
\newcommand{\ncref}[1]{\cref{#1}: \nameref*{#1}} %
\newcommand{\pcref}[1]{\NoCaseChange{Proof of \ncref{#1}}} %
\newcommand{\wass}[1][p]{\mathcal{W}_{#1}}
\newcommand{\wbar}{\bar{W}_n}
\newcommand{\onetail}{\hat{Q}_n} %
\newcommand{\twotail}{\hat{Q}^d_n} %
\newcommand{\oneq}{\tilde{q}_n} %
\newcommand{\oneqhat}{\hat{q}_n} %
\newcommand{\twoq}{\tilde{q}_n^d} %
\newcommand{\twoqhat}{\hat{q}_n^d} %
\newcommand{\mnkappa}{M_{n,\kappa}}
\newcommand{\Rsig}[1][\sigma]{\hyperref[Rsig]{\color{black}{R_{#1}}}} %
\newcommand{\tRsig}[1][\sigma]{\widetilde{\Rsig}}
\newcommand{\muhat}{\hat{\mu}}
\newcommand{\mhat}{\hat{m}}
\newcommand{\wassbd}[1][\sigma]{\hyperref[cabris]{\color{black}{\omega^R_p(#1)}}} %
\newcommand{\KRsig}[1][\sigma]{\hyperref[KRsig]{\color{black}{K_{R,\sig}}}} 
\newcommand{\Rsigsqd}[1][\sigma]{\hyperref[Rsig]{\color{black}{R_{#1}^2}}} 
\newcommand{\Rsigpowfour}[1][\sigma]{\hyperref[Rsig]{\color{black}{R_{#1}^4}}}
\newcommand{\ks}{\textup{KS}} %
\newcommand{\kd}{d_{K}} %
\newcommand{\kdone}{d_{K,1}} %
\newcommand{\ksq}[1][\frac{a}{2}]{q_{n,2}^{\ks}(#1)} %
\newcommand{\ksqone}[1][\frac{a}{2}]{q_{n,1}^{\ks}(#1)} %
\newcommand{\empq}{\hat{q}^{d}_n(R,\delta,a_n)}
\newcommand{\vup}[1][]{v_{n#1}}
\newcommand{\vlow}[1][]{\tilde{v}_{n#1}}
\newcommand{\sighat}{\hat{\sig}}
\newcommand{\sigup}{\hat{\sigma}_{\mathrm{up},a}}
\newcommand{\sigupn}{\hat{\sigma}_{\mathrm{up},a_n}}
\newcommand{\siglow}{\hat{\sigma}_{\mathrm{low},a}}
\newcommand{\siglown}{\hat{\sigma}_{\mathrm{low},a_n}}
\newcommand{\sigmid}{\tilde{\sigma}_{n,2}}
\newcommand{\sigmidone}{\tilde{\sigma}_{n,1}}
\newcommand{\Sz}{S^{\star}} %
\newcommand{\Tz}{T^{\star}} %
\newcommand{\Vz}{V^{\star}} %
\newcommand{\Smid}{S'} %
\newcommand{\Tmid}{T'} %
\newcommand{\Vmid}{V'} %
\newcommand{\bsig}[1][n]{\bar\sig_{#1}}
\newcommand{\Nmin}{N_{\min}} 
\newcommand{\Nmint}{\tilde{N}_{\min}} 
\newcommand{\td}{\tilde{d}}
\newcommand{\mytitle}{Efficient Concentration with Gaussian Approximation}
\newcommand{\myshorttitle}{Efficient Concentration}
\newcommand{\myabstract}{%
Concentration inequalities for the sample mean, like those due to Bernstein, Hoeffding, and Bentkus, are valid for any sample size but overly conservative, yielding confidence intervals that are unnecessarily wide. The central limit theorem (CLT) provides asymptotic confidence intervals with optimal width, but these are invalid for all sample sizes. To resolve this tension, we develop new computable concentration inequalities for bounded variables with asymptotically optimal size, finite-sample validity, and sub-Gaussian decay. These bounds enable the construction of efficient confidence intervals with correct coverage for any sample size and efficient empirical Berry-Esseen bounds that require no prior knowledge of the population variance. We derive our inequalities by tightly bounding non-uniform Kolmogorov and Wasserstein distances to a Gaussian using zero-bias couplings and Stein's method of exchangeable pairs and demonstrate practical improvements over the Bernstein, Hoeffding, Bentkus, Berry-Esseen, Feller-Cramér, Romano-Wolf, empirical Bernstein, empirical Bentkus, and coin-betting inequalities.}
\begin{document}
\etoctocstyle{1}{Table of contents}
\etocdepthtag.toc{mtchapter}
\etocsettagdepth{mtchapter}{section}
\begin{frontmatter}

\title{\mytitle}
\runtitle{\myshorttitle}

\begin{aug}

\author[A]{\fnms{Morgane} \snm{Austern} \ead[label=e1]{morgane.austern@gmail.com}}
\and
\author[B]{\fnms{Lester} \snm{Mackey} \ead[label=e2]{lmackey@microsoft.com}}

\address[A]{Department of Statistics, Harvard University \printead[presep={ ,\ }]{e1}}%

\address[B]{Microsoft Research New England \printead[presep={ ,\ }]{e2}}
\end{aug}
\begin{abstract}
    \myabstract
\end{abstract}%
\begin{keyword}[class=MSC]
\kwd[Primary ]{60F10} %
\kwd{62G15} %
\end{keyword}

\begin{keyword}
\kwd{Efficient concentration inequality}
\kwd{Gaussian approximation}
\kwd{tail bound}
\kwd{quantile bound}
\kwd{confidence interval}
\kwd{empirical Berry-Eseen bound}
\kwd{zero-bias coupling}
\kwd{Stein's method}
\kwd{non-uniform Kolmogorov distance}
\kwd{Wasserstein distance}
\end{keyword}

\end{frontmatter}

\section{Introduction}
Concentration inequalities for the sample mean are ubiquitous in probability theory, statistics, and machine learning. 
Given $n$ observations from an infinite sequence of independent and identically distributed (\iid) random variables $(W_i)_{i=1}^\infty$, they allow us to give finite-sample and high-probability guarantees that the sample mean $\bar{W}_n\defeq \frac{1}{n}\sum_{i=1}^n W_i$ is close to the population mean $\E[W_1]$. 
More specifically, they provide upper bounds for the probability $\P\big(\bar{W}_n-\E[W_1]> t/\sqrt{n}\big)$ for each $t \ge 0$. 
Such inequalities lie at the heart of decision-making in reinforcement learning \citep{audibert2009exploration}, 
generalization guarantees in high-dimensional statistics, machine learning, and deep learning \citep{wainwright2019high,bartlett2002rademacher,zhou2018non}, and 
the design \citep{maurer2009empirical} and selection \citep{mnih2008empirical} of efficient learning procedures.

However, standard concentration inequalities are overly conservative yielding confidence intervals that are unnecessarily wide and generalization guarantees that are weaker than needed. This is notably the case for the commonly used concentration inequalities of \citet{hoeffding1963probability} and \citet{bernstein1924modification}. 
For bounded random variables $W_i \in [0,R]$ with variance $\sigma^2\defeq \Var(W_i) > 0$, the Hoeffding  and Bernstein inequalities respectively state that the scaled deviation $S_n\defeq \sqrt{n}(\wbar-\E[W_1])$ satisfies
\begin{talign}%
\P(S_n > \sig u)
\le \exp\big({-\frac{2u^2\sig^2}{R^2}}\big)
\ \sstext{and}\  %
\label{eq:bern_tails}
\P(S_n > \sig u)&\le 
\exp\big({-\frac{u^2}{2(1+Ru/( 3\sig\sqrt{n}))}}\big), \ \ 
\ \forall u \geq 0.
\end{talign} 
Meanwhile, the central limit theorem (CLT) identifies the exact limit for each tail probability:
\begin{talign}\label{eq:gaussian_tails}
\P(S_n> \sig u)\xrightarrow{n\rightarrow \infty}\Phi^c(u)
\qtext{for all} u \in \reals,
\end{talign}
where $\Phi$ is the cumulative distribution function (CDF) of a standard normal distribution and  $\Phi^c=1-\Phi$. 
As a result, standard confidence intervals based on the CLT are asymptotically exact and often much narrower than those obtained using concentration inequalities. However, these intervals are typically only asymptotically valid and provide incorrect coverage for every sample size $n$. %

The choice between loose but valid concentration inequalities and tight but invalid CLT-based approximations is very unsatisfying. In this paper we derive new bounds that offer the best of both worlds: our new concentration inequalities are both finite-sample valid and \emph{efficient}---that is, asymptotically of minimal size when scaled by $\sqrt{n}$. 
For example, our primary result, \cref{efficient-zero-tail}, 
implies that, for all $u \geq \delta_n \defeq \frac{R}{\sig\sqrt{n}}$, 
\begin{talign}\label{eq_intro}
\P(S_n > \sigma u) 
    \leq 
\Phi^c(u)
    +
\frac{R}{\sig\sqrt{n}}\varphi(u-\delta_n)
    +
\frac{R\,(4+2u)}{\sig\sqrt{n}}
e^{-\frac{\sig^2}{R^2}(u-\frac{3}{2}\delta_n)_+^2}
\end{talign}
where $\varphi(u)\defeq\frac{1}{\sqrt{2\pi}}e^{-u^2/2}$ is the Lebesgue density of a standard normal.
Underlying this implication is a new, computable concentration inequality formed by explicitly bounding a non-uniform Kolmogorov distance between the sample mean and a Gaussian.

It is informative to compare the result \cref{eq_intro} with those obtained using classical CLT corrections. 
The Berry-Esseen bound \citep{esseen1942liapunov} guarantees that 
\begin{talign}\label{eq:berryesseen}
\P(S_n > \sig u)
\le
\Phi^c(u)
+
\frac{C_{R,\sig}}{\sqrt{n}}
 \qtext{for all} u \geq 0
\end{talign}
and a constant $C_{R,\sig}$ depending only on $R$ and $\sig$. 
This yields an efficient concentration inequality, but the bound is overly conservative as the correction is independent of $u$. Non-uniform Berry-Esseen bounds  \citep{nagaev1965some,bikyalis1966estimates} ameliorate this behavior by  %
identifying a constant $\tilde C_{R,\sig}$ satisfying 
\begin{talign}\label{eq:nonuniformberryesseen}
\P(S_n > \sig u)
\le 
\Phi^c(u)
+
\frac{\tilde C_{R,\sig}}{\sqrt{n}(1+u)^3} \qtext{for all} u \geq 0.
\end{talign}
Appealingly, this non-uniformity yields tighter bounds for larger $u$. 
However, the correction has only cubic, that is, $O(u^{-3})$, decay in $u$ as the underlying argument only exploits the existence of a third moment of $\wbar$. 
Quantile coupling inequalities \citep[see, e.g.,][]{mason2012quantile} like the groundbreaking Komlós-Major-Tusnády approximations \citep[Thm.~1]{komlos1975approximation,komlos1976approximation} and the strong embedding bounds of \citet{chatterjee2012new,bhattacharjee2016strong} improve this $u$ dependence for $W_1$ with finite exponential moments but provide at best exponential decay in $u$ and $O(\log n/\sqrt{n})$ decay in $n$. 
By exploiting the boundedness of $W_1$, our new efficient correction term \cref{eq_intro} guarantees faster,  sub-Gaussian %
$e^{-\Omega(u^2)}$ decay 
and eliminates the extraneous $\log n$ factor present in prior quantile coupling inequalities. 
The aforementioned quantile coupling and strong embedding bounds are also unsuitable for practical deployment due to their unidentified constants. 
Crucially for our applications,  our \cref{efficient-zero-tail} is fully computable, allowing us to develop practical efficient confidence regions in \cref{empi,sec:numerical}.

In \cref{simple}, 
we supplement our primary result with a computational refinement that yields tighter tail and quantile bounds for larger %
deviations by carefully controlling the $p$-Wasserstein distance to Gaussianity. 
For example, our efficient Wasserstein tail bound, \cref{cpt_concentration}, improves the dependence on $u$ at the expense of a worse dependence on $n$ and provides an explicit relative error bound in the spirit of classical \Cramer-type inequalities \citep{cramer1938nouveau,feller1943generalization,petrov1975sums,chen2013stein,fang2023p}:
\begin{talign}\notag
\P(S_n > \sig u) 
    &\leq 
\Phi^c(u - \delta_{u,n}) 
    +
\frac{\varphi(u) }{\sqrt{n}}
    \qtext{for}
\delta_{u,n}\defeq\frac{e \KRsig}{\sqrt{n}} \lceil\log\big(\frac{\sqrt{n}}{\varphi(u)}\big)\rceil
    \\&\leq 
\Phi^c(u)\cdot\big(e^{ {(u+1)\delta_{u,n}}} %
    +
\frac{1}{\sqrt{n}}\frac{\varphi(u)}{\Phi^c(u)}\big)
    \qtext{for all}
u > \delta_{u,n}.
\label{eq:rel-err-wass-bound}
\end{talign} 
Here, $\KRsig$ is an explicit constant depending only on $(R,\sig)$ that we define in \cref{simple}. 
Unlike \cref{cpt_concentration}, most \Cramer-type relative error bounds, including those derived by \citet{cramer1938nouveau}, \citet{petrov1975sums}, \citet{chen2013stein}, and \citet{fang2023p}, are unsuitable for practical use due to unidentified constants. 
A notable exception is the inequality of \citet[Thm.~1]{feller1943generalization} which delivers the \Cramer-type bound
\begin{talign}
\label{eq:feller-cramer}
\P(S_n > \sig u) 
\le 
\Phi^c(u) \exp(\frac{6u^3 c_{R,\sig}}{7\sqrt{n}-84uc_{R,\sig}})\big(1 + \frac{9\sqrt{2\pi}c_{R,\sig}}{\sqrt{n}} \frac{\varphi(u)}{\Phi^c(u)}\big)
 \sstext{for all} u \in (0,\frac{\sqrt{n}}{12c_{R,\sig}})
\end{talign}
for a constant $c_{R,\sig}$ depending only on $(R,\sig)$. 
We will see in \cref{sec:numerical} that \cref{cpt_concentration} provides a substantially tighter bound than the Feller-\Cramer inequality \cref{eq:feller-cramer} in all of our experimental settings.

Another important comparison is with the  ``missing-factor'' bounds of 
\citet{eaton1970note,eaton1974probability},
\citet{talagrand1995missing}, 
Pinelis~\citep{pinelis1994extremal,pinelis2006binomial,pinelis2016optimal},
and Bentkus~\citep{bentkus2001inequality,bentkus2002inequality,bentkus2004hoeffding,bentkus2006domination},
which imply tail decay proportional to $\Phi^c(u) \sim \frac{1}{u}\varphi(u)$.  %
A representative example is the computable bound of \citet[Thm.~2.1]{bentkus2006domination},
\begin{talign}\label{eq:bentkus_tails}
\P(S_n > \sig u)
    \le 
\inf_{t \in [0,u)}
\frac{\E[(G_n - t)_+^2]}{(u - t)^2}
\qtext{for all}
u > 0
\end{talign}
where $G_n \defeq \frac{1}{\sqrt{n}}\sum_{i=1}^n \eps_i$ for \iid $(\eps_i)_{i\ge 1}$ with $\P(\eps_i = \frac{\Rsig}{\sig}) = \frac{\sig^2}{\Rsigsqd + \sig^2} = 1-\P(\eps_i = -\frac{\sig}{\Rsig})$,  
\begin{align+}\label{Rsig}\tag{$R_\sig$}
\textstyle
\Rsig \defeq \half R + \half \sqrt{R^2 - 4\sig^2},
\end{align+}
and $a_+\defeq\max(a,0)$ for $a\in\reals$. 
Like our new inequalities in \cref{efficient-zero-tail,cpt_concentration}, the Bentkus bound \cref{eq:bentkus_tails} eschews a closed-form to obtain a significantly tighter tail bound that is still straightforwardly computable \citep[App.~C]{kuchibhotla2021near}.
However, unlike our new inequalities, the Bentkus bound \cref{eq:bentkus_tails} is  inefficient with a limit at least twice as large as the ideal size $\Phi^c(u)$. 
\begin{proposition}[Inefficiency of Bentkus bound]
For each $u> 0$,
\begin{talign}
\inf_{t \in [0,u)}
\frac{\E[(G_n - t)_+^2]}{(u - t)^2}
   \ \xrightarrow{n\rightarrow \infty}\ 
\inf_{t\in[0,u)} 
\frac{\E[(Z-t)_+^2]}{(u-t)^2}
    \geq 
2\Phi^c(u).
\end{talign}
\end{proposition}
\begin{proof}
Fix any $u> 0$ and any positive $\eps < \E[(Z-u)_+^2]$, %
where $Z$ is a standard Gaussian.
By the $2$-Wasserstein CLT \citep[Thm.~1]{bonis2020stein}, there exists an integer $N$ such that
$\sup_{t\leq u}|\E[(G_n-t)_+^2] -\E[(Z-t)_+^2]| \leq \eps$
for all $n \geq N$. Now, fix any $n \geq N$ and suppose
\begin{talign}
f_n(t) \defeq \E[(G_n-t)_+^2]/(u-t)^2 \leq f_n(0) = \E[(G_n)_+^2]/u^2 \leq 1/u^2
\end{talign}
for some $t\in[0,u)$.  
Then we necessarily have
\begin{talign}
{(\E[(Z-u)_+^2]-\eps)}{/(u-t)^2}
    \leq
{(\E[(Z-t)_+^2]-\eps)}{/(u-t)^2}
    \leq
f_n(t)
    \leq
1/u^2.
\end{talign}
Parallel logic ensures $f(t) \defeq \frac{\E[(Z-t)_+^2]}{(u-t)^2} \leq f(0)$ only if $\frac{1}{(u-t)^2}\leq \frac{1}{u^2 \E[(Z-u)_+^2]}$. Therefore,
\begin{talign}
|\inf_{t\in[0,u)} f_n(t) - \inf_{t\in[0,u)} f(t)|
    \leq 
\frac{\eps}{u^2(\E[(Z-u)_+^2]-\eps)}.
\end{talign}
The result now follows from the arbitrariness of $\eps$ and the inequality of \citep[Thm.~7.1]{bentkus2006domination}.
\end{proof}

In \cref{empi}, we apply our new tools to develop practical \emph{empirical Berry-Esseen bounds} that are efficient and finite-sample valid even when the variance parameter $\sigma^2$ is unknown.
In \cref{sec:numerical}, we confirm numerically that our new bounds yield improvements over the Hoeffding, Bernstein, and Bentkus inequalities, the uniform and non-uniform Berry-Esseen corrections, the generalized \Cramer bound of Feller, the popular but inefficient empirical Bernstein \citep{mnih2008empirical,audibert2009exploration,maurer2009empirical} and empirical Bentkus \citep{kuchibhotla2021near} bounds, and the efficient $I_{n,3}$ confidence interval of \citet{romano2000finite}. 
We conclude with an application to Monte Carlo integration (\cref{sec:mc}) in which our efficient empirical Berry-Esseen bounds yield narrower confidence intervals than the state-of-the-art predictable plug-in and betting intervals of \citet{waudby2024estimating} for larger sample sizes. 
\cref{sec:discussion} presents a discussion of these results and related work.

\section{Defining Efficient Concentration}\label{def}
To match the setting of the classical Hoeffding and Bernstein inequalities \cref{eq:bern_tails}, we will focus on random variables satisfying the following distributional assumptions.
\def\theassumption{$(R,\sigma)$}
\addtocounter{assumption}{-1}
\begin{assumption}\label{assump:bounded-dev}
The scaled deviations $(S_n)_{n\geq 1}$ satisfy $S_n = \sqrt{n} (\wbar - \E[W_1])$ for %
\iid variables $(W_i)_{i=1}^\infty$ with $\wbar\defeq \frac{1}{n}\sum_{i=1}^n W_i$, $\Var(W_1) = \sigma^2 > 0$,  and $W_1 \in [0, R]$ almost surely.
\end{assumption}

 Our first inferential goal is to tightly upper bound the tail probability $\P(S_n > \sig u)$ for a given threshold $u \geq 0$.
The CLT provides an asymptotic lower bound for this problem as $\P(S_n > \sig u)$ is known to converge precisely to $\Phi^c(u)$ as $n$ increases. %
\begin{proposition}[Asymptotic lower bound for valid tail bounds]
\label{tail_lower_bound}
Fix any $u \geq 0$ 
and any sequence of candidate tail bounds $(\delta_n(u))_{n\geq 1}$. 
Under \cref{assump:bounded-dev}, if $\P(S_n > \sig u) \leq \delta_n(u)$ for all $n$, then $\Phi^c(u) \leq \liminf_{n\to\infty} \delta_n(u).$
\end{proposition}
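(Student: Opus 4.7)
The plan is to invoke the classical central limit theorem directly and note that the hypothesized tail bound must dominate the exact asymptotic tail probability at every $u$.

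First, I would specialize to any concrete sequence $(W_i)_{i\geq 1}$ satisfying Assumption $(R,\sigma)$ with $\sigma > 0$ (the edge case $\sigma = 0$ is degenerate and presumably excluded; otherwise $S_n \equiv 0$ and the statement fails at $u=0$). Since the $W_i$ are \iid with finite positive variance, the Lindeberg--L\'evy CLT gives $S_n/\sigma \xrightarrow{d} \mathcal{N}(0,1)$. Because $\Phi$ is continuous at every real number, the portmanteau theorem upgrades this to pointwise convergence of the tail probability:
\[
\lim_{n\to\infty}\P(S_n > \sigma u) \;=\; \Phi^c(u).
\]

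Second, I would combine this with the standing hypothesis $\P(S_n > \sigma u) \le \delta_n(u)$. Taking $\liminf_{n\to\infty}$ on both sides and using that the left-hand side in fact has a limit yields
\[
\Phi^c(u) \;=\; \lim_{n\to\infty}\P(S_n > \sigma u) \;\le\; \liminf_{n\to\infty}\delta_n(u),
\]
which is the desired conclusion.

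There is no real obstacle here; the proof is essentially a one-line consequence of the CLT together with continuity of $\Phi$. The only thing worth flagging is why the strict inequality $>$ in $\P(S_n > \sigma u)$ is harmless: since the normal law has no atoms, $\P(S_n > \sigma u)$ and $\P(S_n \geq \sigma u)$ share the same limit $\Phi^c(u)$, so the convergence step goes through regardless of how the boundary is treated. This proposition plays the role of a benchmark: it certifies that the efficient bounds derived later, whose asymptotic value is exactly $\Phi^c(u)$, are the best possible up to lower-order corrections.
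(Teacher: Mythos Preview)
Your proof is correct and follows essentially the same approach as the paper: both invoke the CLT to obtain $\P(S_n > \sigma u) \to \Phi^c(u)$ and then compare with the hypothesized bound. The paper phrases the final step as a contrapositive while you argue directly via $\liminf$, but the mathematical content is identical.
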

\begin{proof}
Suppose that a sequence $\delta_n(u)$ satisfies  $\liminf_{n\to\infty} \delta_n(u) < \Phi^c(u)$.  Then there exists an $\eps > 0$ such that $\delta_n(u) \leq \Phi^c(u) - \eps$ for infinitely many $n$.
However, by the CLT \citep[see, e.g.,][Thm.~3.4.1]{durrett2019probability}, there exists an $n_{\eps}$ such that, for all $n > n_{\eps}$, $\Phi^c(u) - \eps < \P(S_n > \sig u)$.  Therefore, $\delta_n(u) < \P(S_n > \sig u)$ for infinitely many $n$, confirming the claim via its contrapositive.
\end{proof}

Unfortunately, the CLT limit does not provide a suitable tail bound for any finite $n$. %
However, by tightly bounding the distance between the distribution of $S_n$ and the distribution of a Gaussian we can correct the asymptotic bound to obtain one that is both valid in finite samples and asymptotically exact. %
We will call such bounds \emph{efficient concentration inequalities}. 

Our second inferential goal is to tightly bound the quantiles of $S_n$.
That is, given a tail probability $\delta \in (0,1)$ we wish to find $q_n(R,\delta,\sigma)$ (a measurable function of $(W_i)_{i=1}^n, R,\sigma,$ and $\delta$) satisfying $\P(S_n > q_n(R,\delta,\sigma))\le \delta$. Such quantile bounds immediately deliver both one- and two-sided confidence intervals for the population mean $\E[W_1]$ as 
\begin{talign}
\P(\wbar - \frac{q_n(R,\delta,\sigma)}{\sqrt{n}} \leq \E[W_1]) 
\ \wedge \ 
\P(|\E[W_1] - \wbar| \leq \frac{q_n(R,\delta/2,\sigma)}{\sqrt{n}}) \geq 1-\delta.
\end{talign}
The interval efficiency theory of  
 \citet{romano2000finite} implies that the CLT once again provides an asymptotic lower bound for any valid sequence of quantile bounds. %
\begin{proposition}[Asymptotic lower bound for valid quantile bounds]
\label{quantile_lower_bound}
Fix any $R,\sigma > 0$, any $\delta \in (0,\half)$, and any nonnegative candidate quantile bounds $(q_n(R,\delta,\sigma))_{n\geq 1, \sigma >0}$.
Under \cref{assump:bounded-dev}, if $\P(S_n > q_n(R,\delta,\sigma)) \leq \delta$
for all $n$, 
then $(q_n(R,\delta,\sigma))_{n\geq 1}$ is not asymptotically concentrated\footnote{A sequence of nonnegative random variables $(X_n)_{n\geq 1}$ is asymptotically concentrated on $[0,a]$ if $(X_n - a)_+ \toprob 0$.} on $[0,a]$ for any $a < \sigma \Phi^{-1}(1-\delta)$. Here,  $\Phi^{-1}$ is the quantile function of a standard normal distribution.
\end{proposition}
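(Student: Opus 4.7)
The plan is to argue by contradiction: assume the quantile sequence $(q_n(R,\delta,\sigma))_{n\geq 1}$ is asymptotically concentrated on $[0,a]$ for some $a < \sigma\Phi^{-1}(1-\delta)$, and derive a violation of the validity hypothesis $\P(S_n > q_n(R,\delta,\sigma)) \leq \delta$. The key device is to insert a separating constant $b$ with $a < b < \sigma\Phi^{-1}(1-\delta)$, which exists since $\delta < \tfrac{1}{2}$ forces $\Phi^{-1}(1-\delta)>0$. The concentration hypothesis $(q_n - a)_+ \toprob 0$ then implies $\P(q_n > b) \to 0$, so the inclusion $\{S_n > b\} \subseteq \{S_n > q_n\} \cup \{q_n > b\}$ gives
\begin{talign*}
\P(S_n > b) \;\leq\; \P(S_n > q_n) + \P(q_n > b) \;\leq\; \delta + o(1).
\end{talign*}

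Next I would invoke the classical CLT, which applies under Assumption \textup{$(R,\sigma)$} because $W_1$ is bounded with variance $\sigma^2$: one has $S_n \todist \mathcal{N}(0,\sigma^2)$, and since the limit is continuous, $\P(S_n > b) \to \Phi^c(b/\sigma)$. Passing to the limit in the displayed inequality yields $\Phi^c(b/\sigma) \leq \delta$. But $b/\sigma < \Phi^{-1}(1-\delta)$ together with the strict monotonicity of $\Phi^c$ produces $\Phi^c(b/\sigma) > \Phi^c(\Phi^{-1}(1-\delta)) = \delta$, a contradiction.

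The only mild subtlety is housekeeping around Assumption \textup{$(R,\sigma)$}: when the assumption is vacuous (e.g.\ $\sigma^2 > R^2/4$) the proposition holds trivially, and otherwise every admissible distribution has finite second moment so the Lindeberg--L\'evy CLT applies unconditionally. I do not anticipate a serious obstacle; the whole argument is essentially an $\epsilon$-buffer bookkeeping exercise that converts the weak-convergence limit of $\P(S_n > b)$ into a contradiction, mirroring the route used for \cref{tail_lower_bound} but with the extra step of absorbing the randomness of $q_n$ via the in-probability bound $\P(q_n > b) \to 0$.
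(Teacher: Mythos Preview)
Your argument is correct, and it is more self-contained than the paper's. The paper's proof is a one-line citation: it applies Thm.~2.1 of \citet{romano2000finite} to the (two-sided, conservative) confidence intervals $I_n(\delta,\sigma)=\bar W_n+\frac{1}{\sqrt{n}}[-q_n(R,\delta,\sigma),\,q_n(R,\delta,\sigma)]$ and reads off the conclusion. You instead give a direct contradiction: separate $a$ from $\sigma\Phi^{-1}(1-\delta)$ by a constant $b$, use the inclusion $\{S_n>b\}\subseteq\{S_n>q_n\}\cup\{q_n>b\}$ together with $(q_n-a)_+\toprob 0$ to get $\limsup_n\P(S_n>b)\le\delta$, and then invoke the CLT to force $\Phi^c(b/\sigma)\le\delta$, contradicting $b/\sigma<\Phi^{-1}(1-\delta)$. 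Your route is essentially what Romano--Wolf's theorem does under the hood in this simple setting, so the gain is that your proof requires no external reference and makes the role of the randomness of $q_n$ explicit; the paper's route is shorter on the page and ties the result to the broader interval-efficiency framework that motivates the proposition.
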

\begin{proof}
The result follows by applying Thm.~2.1 of  \citet{romano2000finite} to the conservative  confidence intervals $I_n(\delta, \sigma) = \bar{W}_n + \frac{1}{\sqrt{n}}[-q_n(R,\delta,\sigma), q_n(R,\delta, \sigma)]$ for the unknown mean $\E[W_1]$.
\end{proof}

To construct \emph{efficient quantile bounds}, we will once again tightly bound the distance between $S_n$ and its Gaussian limit. 

\subsection{Notation} 
Hereafter, we will write $a_n\lessapprox b_n$ to indicate that two sequences $(a_n)_{n\ge1}$ and $(b_n)_{n\ge1}$ satisfy $a_n\le b_n + o_n(b_n)$ and use the shorthand
$\infnorm{X} = \sup \support{|X|}$ and $\norm{X}_p\defeq\E[|X|^p]^{1/p}$ when a random variable $X$ is bounded or has a $p$-th absolute moment for some $p\ge 1$. 
We will also make regular use of the parameter $\Rsig$ 
which provides a priori bounds on the summand mean $\E[W_1]$ and deviation $|W_1 - \E[W_1]|$:

 \begin{lemma}[Summand mean and deviation bounds]
 \label{summand-deviation-bound}
 Under \cref{assump:bounded-dev}, 
 we have 
 \begin{talign}
    R-\Rsig \leq \E[W_1] \leq \Rsig
    \ \stext{and} \ 
    |W_1 - \E[W_1]| \leq \Rsig \ \stext{almost surely}. %
 \end{talign}
 \end{lemma}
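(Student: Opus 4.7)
The plan is to reduce everything to a single quadratic inequality on $\mu \defeq \E(W_1)$ coming from the Bhatia--Davis-type bound on the variance of a bounded random variable, and then to derive the deviation bound as an immediate corollary of the mean bound together with $W_1 \in [0,R]$.

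\textbf{Step 1: control $\mu$ via the boundedness of $W_1$.} Since $W_1 \in [0,R]$ almost surely, we have $W_1(R-W_1) \geq 0$, so taking expectations yields $\E(W_1^2) \leq R\mu$. Plugging this into the variance identity $\sigma^2 = \E(W_1^2) - \mu^2$ gives the quadratic inequality
\begin{talign*}
\mu^2 - R\mu + \sigma^2 \;\le\; 0.
\end{talign*}

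\textbf{Step 2: read off the mean bound.} The roots of $x^2 - Rx + \sigma^2$ are precisely $\tfrac{1}{2}(R \pm \sqrt{R^2 - 4\sigma^2})$, i.e.\ $R_\sigma$ and $R - R_\sigma$ in the notation of \cref{Rsig}. (In particular, since $\sigma^2 \leq R\mu - \mu^2 \leq R^2/4$, the discriminant is nonnegative, so $R_\sigma$ is well defined and real.) The quadratic inequality therefore forces $\mu \in [R - R_\sigma,\, R_\sigma]$, giving the first claim.

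\textbf{Step 3: deduce the deviation bound.} Combine $W_1 \in [0,R]$ with $\mu \in [R - R_\sigma, R_\sigma]$ termwise:
\begin{talign*}
W_1 - \mu \;\leq\; R - (R - R_\sigma) \;=\; R_\sigma,
\qquad
W_1 - \mu \;\geq\; 0 - R_\sigma \;=\; -R_\sigma,
\end{talign*}
almost surely, which yields $|W_1 - \mu| \leq R_\sigma$ a.s.

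There is no real obstacle here: the only thing worth flagging is the a priori check that $R^2 \geq 4\sigma^2$ (so that $R_\sigma \in \R$), which is itself a consequence of the Popoviciu/Bhatia--Davis inequality $\sigma^2 \leq R^2/4$ for $[0,R]$-valued random variables and is automatic from Step~1.
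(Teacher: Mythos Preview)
Your proof is correct and follows essentially the same approach as the paper: both derive the quadratic inequality $\mu^2 - R\mu + \sigma^2 \le 0$ from $\E(W_1^2)\le R\mu$ and read off the mean bound from its roots. Your write-up is in fact a bit more complete, since you explicitly verify the discriminant is nonnegative and spell out Step~3 for the deviation bound, whereas the paper leaves that last part implicit.
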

 \begin{proof}
  Since $W_1\in[0,R]$, we have
  $
    \E[W_1]^2 = \E[W_1^2] - \sig^2 \leq R \E[W_1] - \sig^2.
  $
  Hence $\E[W_1]$ must lie between the roots $\half R \pm \half \sqrt{R^2-4\sig^2}$ of this quadratic inequality.
 \end{proof}

\section{Efficient Concentration with Zero-Bias Couplings}\label{zero}
To derive our initial efficient concentration inequality, we tightly bound a non-uniform Kolmogorov distance between the scaled deviation $S_n$ and a Gaussian using \emph{zero-bias couplings}.
As in \citet[Prop.~2.1]{chen2011normal}, we say that $\Sz$ has the \emph{zero-bias distribution} for a mean-zero random variable $S$ with $\sigma^2:=\Var(S) <\infty$ if the distribution of $\Sz$ is absolutely continuous with Lebesgue density $p^\star(x) = \E[S\, \indic{S > x}]/\Var(S)$ or, equivalently, if $\sig^2\E[f'(\Sz)] = \E[Sf(S)]$ for all absolutely continuous $f$ with $\E[|S f(S)|] <\infty$.
Our primary result, proved in \cref{proof-efficient-zero-tail}, uses a close coupling of $S_n$ and its zero-biased counterpart $\Sz_n$ to establish efficient concentration. 
More precisely, we construct $\Sz_n$ satisfying $|S_n-\Sz_n|\leq \frac{R}{\sqrt{n}}$ almost surely, show that the tails of $\Sz_n$ are closely bounded by those of a Gaussian, and transfer the bounds to $S_n$ via the small perturbation $\frac{R}{\sqrt{n}}$. 

\begin{theorem}[Efficient zero-bias tail bounds]\label{efficient-zero-tail}
\!Under \cref{assump:bounded-dev}, for all $u\!\geq\! 0$ and $\lam\!\in\![0,1]$,
\begin{talign}
\P(S_n > \sig u \!+\! \frac{R}{\sqrt{n}}) 
    \leq
\Phi^c(u) 
    +
\frac{R}{\sig \sqrt{n}+R\,u}
\big[h_u(\lam u) \!-\! u\Phi^c(u)
    +
(h_u(u) \!-\! h_u(\lam u))\,Q_{n}(\lam \sigma u)\big],
\end{talign}
where, for $\Delta_n \defeq \frac{\Rsig}{4\sqrt{n}}$ and all $w\leq u$,
\begin{talign}
\label{hu-def}
h_u(w) 
    &\defeq 
(w + 
\frac{(1+w^2)\Phi(w)}{\varphi(w)})\Phi^c(u) 
    \qtext{and}\\
\label{Qnu-def}
Q_{n}(u)
    &\defeq 
\min\!\big(
e^{-\frac{2(u-\Delta_n)_+^2}{R^2}}
    ,
e^{-\frac{(u-\Delta_n)_+^2}{2(\vup^2+{\Rsig (u-\Delta_n)/}{(3\sqrt{n})})}}
    ,
\Phi^c(\frac{u-\Delta_n}{\vlow})+ \frac{0.56}{\sqrt{n}}\frac{\Rsig \vup^2\, + \beta_n}{\vlow^3}\big),
\end{talign} 
for $\vup^2\defeq \sig^2 + \frac{1}{9n}(\Rsigsqd-6\sig^2)$,
$\vlow^2 
    \!\defeq
\sig^2(1-\frac{89}{144 n})$, 
and
$\beta_n\!\defeq {\min(\frac{1}{4}\Rsig, \Rsig[\frac{\sqrt{55}\sig}{12}]\!-\!\Rsig)} (\frac{\sig^2}{3n} + \frac{\Rsigsqd}{9n})$.
Moreover, under the same conditions, 
\begin{talign}
\P(&S_n > \sig u\frac{\sqrt{n+1}}{\sqrt{n}} +\frac{\Rsig}{\sqrt{n}}) 
    \\ \label{alt-zero-tail}
    &\leq 
\Phi^c(u) 
    +
\frac{R}{\sig \sqrt{n+1}+R\,u}
\big[h_u(\lam u) - u\Phi^c(u) + (h_u(u) - h_u(\lam u))\,Q_{n+1}(\lam \sigma u\frac{\sqrt{n}}{\sqrt{n+1}})\big].
\end{talign}
\end{theorem}

Like the popular concentration inequalities due to Hoeffding \cref{eq:bern_tails}, Bernstein \cref{eq:bern_tails}, and Bentkus \cref{eq:bentkus_tails}, \cref{efficient-zero-tail} is valid and computable for any sample size $n$. 
However, unlike the Hoeffding, Bernstein, and Bentkus inequalities, \cref{efficient-zero-tail} is also efficient and converges to the asymptotically exact Gaussian tail bound at a $O(1/\sqrt{n})$ rate.
In fact, as our next corollary demonstrates, the suboptimality of \cref{efficient-zero-tail} also decays at a sub-Gaussian $e^{-\Omega(u^2)}$ rate in $u$, faster than the more conservative Berry-Esseen \cref{eq:berryesseen}, non-uniform Berry-Esseen \cref{eq:nonuniformberryesseen}, and quantile coupling inequalities \citep{komlos1975approximation,komlos1976approximation,bretagnolle1989hungarian,chatterjee2012new,mason2012quantile,bhattacharjee2016strong}.

\begin{corollary}[Efficient sub-Gaussian tail bound]\label{subgaussian}
Under \cref{assump:bounded-dev}, 
\begin{talign}
\P(S_n > \sigma u) 
    \leq 
\Phi^c(u)
    +
\frac{R}{\sig\sqrt{n}}\varphi(u-\delta_n)
    +
\frac{R\,(4+2u)}{\sig\sqrt{n}}
e^{-\frac{\sig^2}{R^2}(u-\frac{3}{2}\delta_n)_+^2}
\end{talign}
for all $u \geq \delta_n \defeq \frac{R}{\sig\sqrt{n}}$.
\end{corollary}
\begin{proof}
Fix any $u \ge \delta_n$,
introduce the shifted value $v = u - \delta_n$, and instantiate the notation of \cref{efficient-zero-tail}. 
For each $w\geq 0$, we have  
$Q_n(w) \leq e^{-{2(w-\Delta_n)_+^2/}{R^2}}$ 
by \cref{Qnu-def} and 
\begin{talign}
a_v(w)
    \defeq
\frac{(1+w^2)\Phi(w)}{\varphi(w)}\Phi^c(v) 
    \leq
\frac{2(1+w^2)\Phi(w)}{v + \sqrt{v^2 + 8/\pi}}e^{-\frac{v^2-w^2}{2}}
    \leq
(2+w)\Phi(w)e^{-\frac{v^2-w^2}{2}}
\end{talign}
by
\citet[7.1.13]{abramowitz1964handbook}. 
Invoking 
\cref{efficient-zero-tail} with $\lam=1/\sqrt{2}$ therefore yields 
\begin{talign}
\P(&S_n > \sig u) %
    -
\Phi^c(v) \\
    &\leq
\frac{R}{\sig \sqrt{n}+R\,v}
\big[(a_v(\lam v) 
    -
(1-\lam)v\Phi^c(v))(1-Q_{n}(\lam \sigma v))
    +
a_v(v)\,Q_{n}(\lam \sigma v)\big]\\
    &\leq
\frac{R\,(2+v)\Phi(v)}{\sig \sqrt{n}+R\,v}
    \big(
e^{-\frac{(1-\lam^2) v^2}{2}}
    +
e^{-\frac{2(\lam \sig v-\Delta_n)_+^2}{R^2}}
    \big) 
    =
\frac{R\,(2+v)\Phi(v)}{\sig \sqrt{n}+R\,v}
    \big(
e^{-\frac{v^2}{4}}
    +
e^{-\frac{(\sig v-\sqrt{2}\Delta_n)_+^2}{R^2}}
    \big) \\
    &\leq
\frac{2R\,(2+v)\Phi(v)}{\sig \sqrt{n}+R\,v}
e^{-\frac{(\sig v-\sqrt{2}\Delta_n)_+^2}{R^2}} 
    \leq
\frac{2R\,(2+u)}{\sig \sqrt{n}}
e^{-\frac{\sig^2}{R^2}(u-\frac{3}{2}\delta_n)_+^2}.
\end{talign}
The advertised result now follows from the relation
\begin{talign}
\Phi^c(v)
    =
\Phi^c(u)
    +
\int_v^u \varphi(x) dx
    \leq
\Phi^c(u)
    +
(u-v) \varphi(v)
    =
\Phi^c(u)
    +
\delta_n\varphi(u-\delta_n).
\end{talign}
\end{proof}

While the bound in \cref{efficient-zero-tail} is more complex than that in \cref{subgaussian}, it is straightforward to compute\footnote{\label{github}See \url{https://github.com/lmackey/gauss_conc} for our open-source Python implementation.} and significantly tighter in practice. 
In \cref{empi,sec:numerical}, we will use our efficient concentration inequalities to develop efficient empirical Berry-Esseen bounds and efficient confidence intervals for Monte Carlo integration. %
\subsection{\pcref{efficient-zero-tail}}\label{proof-efficient-zero-tail}
The proof of \cref{efficient-zero-tail} relies on four auxiliary results.
The first, proved in \cref{proof-efficient-zero-tail-unidentical}, 
provides a tail bound for a sum, $T_n$, of independent random variables in terms of a sum, $\Tmid_n$, of interpolated zero-biased variables.
\begin{theorem}[Unidentical zero-bias tail bound]\label{efficient-zero-tail-unidentical}
Suppose independent $(V_i)_{i\geq 1}$ satisfy 
$\E[V_i] = 0,$
$\Var(V_i) = \sig_i^2,$ 
and $\sup \support{V_i} - \inf \support{V_i} \leq R$ for all $i\in\naturals$.
For each $n\in\naturals$, 
define the scaled sum 
$T_n = \frac{1}{\sqrt{n}}\sum_{i=1}^n V_i$ 
with
variance parameter $\bar\sig_n^2=\frac{1}{n}\sum_{i=1}^n \sig_i^2$ 
and 
auxiliary variable
\begin{talign}\label{Smidn-def}
\Tmid_n \defeq T_n + \frac{1}{\sqrt{n}}(\Vmid_{I_n} - V_{I_n})
    \ \stext{for}\ 
\Vmid_i \defeq \Vz_i + U(V_i-\Vz_i)
   \ \stext{and}\ 
\P(I_n = i) = \frac{1}{n}\frac{\sig_i^2}{\bsig^2}\,\indic{1\le i\le n},
\end{talign}
where each $\Vz_i$ has the 
zero-bias distribution of $V_i$, and 
$U\sim \Unif([0,1])$, 
$(I_n)_{n\geq 1}$,
and $(\Vz_i)_{i\geq 1}$ are mutually independent and independent of $(V_i)_{i\geq 1}$.
Then, for all $u\geq 0$ and $\lam\in[0,1]$,
\begin{talign}
\P(T_n > \bsig u + \frac{R}{\sqrt{n}}) 
    &\leq
\frac{\bsig \sqrt{n}}{\bsig \sqrt{n}+R\,u}\Phi^c(u) 
  +
\, 
\frac{R}{\bsig \sqrt{n}+R\,u}
(h_u(u) - h_u(\lam u))\,  \P(\Tmid_n > \lam \bsig u)
\\&+\frac{R}{\bsig \sqrt{n}+R\,u}
h_u(\lam u).
\end{talign}
\end{theorem}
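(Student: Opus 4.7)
The plan is to apply a zero-bias Stein's-method argument, decompose the resulting error on the two regions separated by $\lambda\bsig u$, and sharpen the bound using Mills-type inequalities near the jump of the Stein solution.

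First I would introduce the auxiliary zero-bias variable $\Tz_n\defeq T_n - (V_{I_n}-\Vz_{I_n})/\sqrt n$ and verify, via the Goldstein--Reinert construction for sums of independent variables, that $\Tz_n$ has the zero-bias distribution of $T_n$; this gives the identity $\bsig^2\,\E[f'(\Tz_n)] = \E[T_n f(T_n)]$ for every absolutely continuous $f$ with $\E|T_n f(T_n)|<\infty$. A direct check on the construction of $\Vmid_i$ shows that $\Tmid_n = \Tz_n + U(T_n - \Tz_n)$, so by Fubini, for any weak antiderivative pair $F'=G$,
\begin{talign*}
    \E[F(T_n)-F(\Tz_n)] = \E\Big[\int_{\Tz_n}^{T_n}G(s)\,ds\Big] = \E[(T_n - \Tz_n)\,G(\Tmid_n)].
\end{talign*}

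Second, I would introduce the Stein solution $g$ for $\bsig^2 g'(x) - xg(x) = \indic{x>\bsig u} - \Phi^c(u)$, whose piecewise-explicit form involves $\sqrt{2\pi}\,e^{x^2/(2\bsig^2)}\Phi(x/\bsig)\Phi^c(u)$ on $x\leq\bsig u$ and its dual on $x>\bsig u$. Choosing the antiderivative $F(x)=xg(x)$ so that $G(x)=(xg(x))' = g(x)+xg'(x)$, applying the Stein identity at $T_n$, and using the zero-bias identity to rewrite $\E[T_n g(T_n)]$ yields the exact equality
\begin{talign*}
    \P(\Tz_n > \bsig u) - \Phi^c(u) = \E[(T_n - \Tz_n)\,G(\Tmid_n)].
\end{talign*}
Since $|T_n - \Tz_n| \leq R/\sqrt n$, the coupling inequality $\P(T_n > \bsig u + R/\sqrt n) \leq \P(\Tz_n > \bsig u)$ then produces the skeleton $\P(T_n > \bsig u + R/\sqrt n) \leq \Phi^c(u) + (R/\sqrt n)\,\E|G(\Tmid_n)|$.

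Third, I would compute $G$ explicitly to recognize $h_u$ as its absolute-value envelope: for $x\leq \bsig u$ one has $|G(x)| = h_u(x/\bsig)/\bsig$, while for $x>\bsig u$ the function $|G|$ jumps down at $\bsig u$ and decays monotonically to zero, so $|G|\leq h_u(u)/\bsig$ uniformly. Since $h_u$ is non-decreasing, splitting the expectation on $\{\Tmid_n \leq \lambda\bsig u\}$ versus its complement and dominating each piece by its right endpoint gives
\begin{talign*}
    \E|G(\Tmid_n)| \leq \frac{1}{\bsig}\big[h_u(\lambda u) + (h_u(u) - h_u(\lambda u))\,\P(\Tmid_n > \lambda\bsig u)\big],
\end{talign*}
which when combined with the skeleton yields the leading part of the claimed bound, with $\bsig\sqrt n$ in place of the announced denominator $\bsig\sqrt n + R\diff[u]$.

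Finally, to upgrade the denominator, I would sharpen the envelope on $\{\Tmid_n>\bsig u\}$ using the two-term Mills inequality $\sqrt{2\pi}\,e^{u^2/2}\Phi^c(u) \geq \frac{2}{u+\sqrt{u^2+8/\pi}} - \frac{8u}{\pi(u+\sqrt{u^2+8/\pi})^2}$, which pushes $|G|$ strictly below $h_u(u)/\bsig$ on this region and produces a savings of order $\diff[u]/\bsig \cdot \P(\Tmid_n > \bsig u)$. Lower-bounding $\P(\Tmid_n > \bsig u) \geq \P(T_n > \bsig u + R/\sqrt n)$ via the bounded coupling then converts this savings into a self-referential term $-(R\diff[u]/(\bsig\sqrt n))\,\P(T_n > \bsig u + R/\sqrt n)$ on the right-hand side, whose rearrangement yields the denominator $\bsig\sqrt n + R\diff[u]$. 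I expect this last step to be the main obstacle: the precise algebraic extraction of $\diff[u]$ as a two-term Mills correction requires a careful accounting of the jump of $|G|$ at $\bsig u$, and the transfer from $\Tmid_n$ back to $T_n$ must preserve the efficiency of the bound. The preceding steps are essentially standard Stein-method bookkeeping once the right test function and interpolation identity are in place.
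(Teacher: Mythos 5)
Your proposal reconstructs the paper's proof essentially step-for-step: the Chen–Goldstein–Shao zero-bias construction for $T_n$ with $\sigma_i^2$-weighted index $I_n$, the Stein equation and zero-bias identity plus the interpolation at $\Tmid_n$ to write the deviation as $\E[(\Tz_n - T_n)\,G(\Tmid_n)]$, the piecewise envelope on $G$ in terms of $h_u$, the Mills-ratio sharpening on $\{w>u\}$ giving the $\diff[u]$ savings, and the coupling inequality $\P(\Tmid_n>\bsig u)\ge\P(T_n>\bsig u+R/\sqrt n)$ that turns the savings into a self-referential term and, after rearrangement, the denominator $\bsig\sqrt n + R\,\diff[u]$. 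The one slip is the phrase ``since $h_u$ is non-decreasing'': $h_u$ is not monotone on all of $\reals$ (it jumps down at $w=u$ and decays thereafter), so the envelope must be justified by the case analysis you already sketch — $h_u\le h_u(\lam u)$ on $(-\infty,\lam u]$ by local monotonicity, and $h_u\le h_u(u)$ (indeed $\le h_u(u)-\diff[u]$) on $(u,\infty)$ via the Mills bound — which is exactly the paper's Lemma on the growth of $h_u$.
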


The second result, also proved in \cref{proof-efficient-zero-tail-unidentical}, 
provides an alternative tail bound when the summands underlying $T_n$ are \iid
\begin{theorem}[Identical zero-bias tail bound]\label{efficient-zero-tail-identical}
Instantiate the notation and assumptions of \cref{efficient-zero-tail-unidentical}. 
If $(V_i)_{i=1}^{n+1}$ are \iid with $\sig\defeq\sig_1$ and $V_1\geq -R'$ almost surely, then
\begin{talign}
&\P(T_n > \sig u\frac{\sqrt{n+1}}{\sqrt{n}} +\frac{R'}{\sqrt{n}}) \\
    &\leq 
\frac{\sig \sqrt{n+1}}{\sig \sqrt{n+1}+R\,u}\Phi^c(u) 
    +
\, 
\frac{R}{\sig \sqrt{n+1}+R\,u}
(h_u(u) - h_u(\lam u)) 
\P(\Tmid_{n+1} > \lam \sig u\frac{\sqrt{n}}{\sqrt{n+1}})
\\&+\frac{R}{\sig \sqrt{n+1}+R\,u}
h_u(\lam u).
\end{talign}
\end{theorem}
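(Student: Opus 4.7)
My plan is to re-run the proof of \cref{efficient-zero-tail-unidentical} for the augmented iid sum $\tilde T_{n+1} \defeq \frac{1}{\sqrt{n+1}}\sum_{i=1}^{n+1} V_i$, sharpening the single one-sided coupling step that produces the $R/\sqrt{n+1}$ LHS shift by exploiting the additional bound $V_1 \ge -R'$. Since the summands are iid, $\tilde T_{n+1}$ has variance parameter $\bar\sigma_{n+1}^2 = \sigma^2$, and exchangeability lets us choose the replaced index in the zero-biased companion $\Tmid_{n+1}$ to be $I_{n+1}=n+1$ without altering its distribution.

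With this choice the algebraic identities
\[
  \tilde T_{n+1} \;=\; \sqrt{\tfrac{n}{n+1}}\,T_n + \tfrac{V_{n+1}}{\sqrt{n+1}}
  \qquad\text{and}\qquad
  \Tmid_{n+1} \;=\; \sqrt{\tfrac{n}{n+1}}\,T_n + \tfrac{\Vmid_{n+1}}{\sqrt{n+1}}
\]
hold, and the inclusions $V_{n+1}, V_{n+1}^{\star} \in [-R', R-R']$ force the interpolation $\Vmid_{n+1} = V_{n+1}^{\star} + U(V_{n+1} - V_{n+1}^{\star})$ to satisfy $\Vmid_{n+1} \ge -R'$. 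In the proof of \cref{efficient-zero-tail-unidentical}, the $R/\sqrt{n+1}$ LHS shift arises from a one-sided coupling step controlled by the generic range bound $|\Vmid_{n+1} - V_{n+1}| \le R$. I would replace that step with the sharper one-sided inequality $-\Vmid_{n+1}/\sqrt{n+1} \le R'/\sqrt{n+1}$, which reduces the LHS shift to $R'/\sqrt{n+1}$ while leaving $h_u$, $\delta_u$, and the denominator $\sigma\sqrt{n+1}+R\,\delta_u$ intact, since these originate from two-sided smoothness estimates on the Stein solution and do not benefit from a one-sided refinement. Because $\Tmid_{n+1}$ enters the argument through the $\sqrt{n/(n+1)}\,T_n$ component rather than through $\tilde T_{n+1}$ directly, the same reduction rescales the argument of $\P(\Tmid_{n+1}>\cdot)$ from $\lambda\sigma u$ to $\lambda\sigma u\sqrt{n}/\sqrt{n+1}$.

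The final step is to translate the resulting bound from $\sqrt{n/(n+1)}\,T_n$ back to $T_n$ by multiplying through by $\sqrt{(n+1)/n}$, producing the stated LHS threshold $\sigma u\sqrt{n+1}/\sqrt{n}+R'/\sqrt{n}$ and preserving the rescaled $\Tmid_{n+1}$ threshold $\lambda\sigma u\sqrt{n}/\sqrt{n+1}$ inside $\P(\Tmid_{n+1}>\cdot)$. The main obstacle will be pinpointing, in the proof of \cref{efficient-zero-tail-unidentical}, the precise step that produces the one-sided LHS shift and verifying that the $R\to R'$ sharpening applied there touches only the LHS shift and the $\Tmid_{n+1}$ threshold; the denominator $\sigma\sqrt{n+1}+R\,\delta_u$ and the functions $h_u$, $\delta_u$ must be shown to continue depending on the full range $R$ rather than on $R'$, since they encode two-sided smoothness bounds insensitive to the one-sided refinement.
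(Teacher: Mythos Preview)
Your plan is essentially the paper's own argument: the paper also couples $T_n$ directly to $\Tz_{n+1}$ and $\Tmid_{n+1}$ via the distributional identities $\sqrt{n}\,T_n + \Vz_{n+1} \eqdist \sqrt{n+1}\,\Tz_{n+1}$ and $\sqrt{n}\,T_n + \Vmid_{n+1} \eqdist \sqrt{n+1}\,\Tmid_{n+1}$ together with the one-sided lower bounds $\Vz_{n+1},\Vmid_{n+1}\geq -R'$, and then invokes the \emph{first} conclusion of \cref{zero-tail} with $S=T_{n+1}/\sigma$, $\Sz=\Tz_{n+1}/\sigma$, $\delta=R/(\sigma\sqrt{n+1})$, using the companion bound $\P(T_n>\text{threshold})\leq\P(\Tmid_{n+1}>\sigma u)$ to absorb the $-\diff[u]\,\P(\Smid>u)$ term into the denominator. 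One small correction: your explanation of the factor $\sqrt{n}/\sqrt{n+1}$ on the $\Tmid_{n+1}$ threshold is off---the argument actually delivers $\P(\Tmid_{n+1}>\lambda\sigma u)$, and the stated threshold $\lambda\sigma u\sqrt{n}/\sqrt{n+1}$ is merely a harmless weakening, not a by-product of the rescaling you describe.
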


The third result %
bounds the moments of interpolated zero-biased variables.
\begin{lemma}[Properties of $\Vmid_i$]\label{Vmid-properties}
Instantiate the notation and assumptions of \cref{efficient-zero-tail-unidentical}.
For any $i\in\naturals$,
if $|V_i| \leq R'$ almost surely, then 
$\sup \support{\Vmid_i} - \inf \support{\Vmid_i} \leq R,$
\begin{talign}
&|\E[\Vmid_i]| \leq \frac{R'}{4},\quad
\E[{\Vmid_i}^2] \leq \frac{\sig_i^2}{3} + \frac{{R'}^2}{9},\quad
\Var(\Vmid_i) \geq \frac{55\sig_i^2}{144},\\
&|\Vmid_i| \leq R',\qtext{and}
|\Vmid_i - \E[\Vmid_i]| \leq R_i \defeq \min(\half R + \half \sqrt{R^2 - \frac{220}{144}\sig_i^2}, \frac{5}{4}R')
\qtext{almost surely.}
\end{talign} 
\end{lemma}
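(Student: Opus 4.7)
The plan is to exploit the defining identity $\sig_i^2\E[f'(\Vz_i)] = \E[V_i f(V_i)]$ of the zero-bias distribution with the test functions $f(v) = v^2/2$ and $f(v) = v^3/3$, which produce $\E[\Vz_i] = \E[V_i^3]/(2\sig_i^2)$ and $\E[{\Vz_i}^2] = \E[V_i^4]/(3\sig_i^2)$. Combined with the representation $\Vmid_i = (1-U)\Vz_i + U V_i$ where $U\sim \Unif([0,1])$ is independent of $(\Vz_i, V_i)$, this reduces every required bound to a moment computation in $\sig_i^2$, $R$, and $R'$. The easy cases come first: since $p^\star(x) = \E[V_i \indic{V_i > x}]/\sig_i^2$ vanishes both for $x \geq \sup\support{V_i}$ and for $x < \inf\support{V_i}$ (the latter using $\E[V_i] = 0$), $\Vz_i$ shares the support of $V_i$, and the convex combination $\Vmid_i$ inherits that support, yielding $\sup\support{\Vmid_i} - \inf\support{\Vmid_i} \leq R$ and $|\Vmid_i| \leq R'$. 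Next, $\E[\Vmid_i] = \half \E[\Vz_i]$ together with $|\E[V_i^3]| \leq R'\sig_i^2$ gives $|\E[\Vmid_i]| \leq R'/4$; independence of $U$ from $(\Vz_i, V_i)$ and $\E[U^2] = \E[(1-U)^2] = 1/3$ give $\E[\Vmid_i^2] = (\E[{\Vz_i}^2] + \sig_i^2)/3$, and $\E[V_i^4] \leq R'^2 \sig_i^2$ then delivers $\E[\Vmid_i^2] \leq \sig_i^2/3 + R'^2/9$.

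The main obstacle is the variance lower bound $\Var(\Vmid_i) \geq 55\sig_i^2/144$. Writing $m = \E[\Vz_i]$ and $\tau^2 = \Var(\Vz_i)$, the same moment identities give $\Var(\Vmid_i) = (\tau^2 + \sig_i^2)/3 + m^2/12$. To lower-bound $\tau^2$, I combine Jensen's inequality $\E[V_i^4] \geq \sig_i^4$ with Cauchy-Schwarz $(\E[V_i^3])^2 \leq \sig_i^2 \E[V_i^4]$ to obtain $\E[{\Vz_i}^2] \geq \max(\sig_i^2/3, 4m^2/3)$, hence $\tau^2 \geq \max(\sig_i^2/3 - m^2, m^2/3)$. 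A short case split finishes the job: when $m^2 \leq \sig_i^2/4$, the first option yields $\Var(\Vmid_i) \geq 4\sig_i^2/9 - m^2/4 \geq 4\sig_i^2/9 - \sig_i^2/16 = 55\sig_i^2/144$; when $m^2 > \sig_i^2/4$, the second yields $\Var(\Vmid_i) \geq \sig_i^2/3 + 7m^2/36 > 55\sig_i^2/144$.

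Finally, for the deviation bound on $|\Vmid_i - \E[\Vmid_i]|$, I apply the argument of \cref{summand-deviation-bound} to the shift $\tilde W \defeq \Vmid_i - \inf\support{\Vmid_i}$, which is $[0,R]$-valued with $\Var(\tilde W) = \Var(\Vmid_i)$, obtaining $|\Vmid_i - \E[\Vmid_i]| \leq \half R + \half\sqrt{R^2 - 4\Var(\Vmid_i)}$. Monotonicity of the right-hand side in $-\Var(\Vmid_i)$, combined with the $55/144$ lower bound just established, recovers the first term in $R_i$; the second term, $5R'/4$, follows from the triangle inequality $|\Vmid_i - \E[\Vmid_i]| \leq |\Vmid_i| + |\E[\Vmid_i]| \leq R' + R'/4$.
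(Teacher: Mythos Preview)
Your proof is correct and follows essentially the same route as the paper: the same zero-bias moment identities $\E[\Vz_i]=\E[V_i^3]/(2\sig_i^2)$ and $\E[{\Vz_i}^2]=\E[V_i^4]/(3\sig_i^2)$, the same support argument, the same invocation of \cref{summand-deviation-bound} for the deviation bound, and the same pair of inequalities (Cauchy--Schwarz on $\E[V_i^3]^2$ and Jensen on $\E[V_i^4]$) for the variance lower bound. The one organizational difference is in the variance step: you parametrize by $m=\E[\Vz_i]$ and split into the cases $m^2\le\sig_i^2/4$ and $m^2>\sig_i^2/4$, whereas the paper keeps $\E[V_i^4]$ as the free variable and writes
\[
\Var(\Vmid_i)\;\geq\;\tfrac{1}{3}\sig_i^2+\Big(\tfrac{1}{9}-\tfrac{1}{16}\Big)\tfrac{\E[V_i^4]}{\sig_i^2}\;=\;\tfrac{1}{3}\sig_i^2+\tfrac{7}{144}\,\tfrac{\E[V_i^4]}{\sig_i^2}\;\geq\;\tfrac{55}{144}\sig_i^2,
\]
the first inequality being Cauchy--Schwarz applied to $\E[\Vmid_i]^2\le \E[V_i^4]/(16\sig_i^2)$ and the second Jensen. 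This avoids the case split entirely while using exactly the same ingredients; your version is a touch longer but equally valid.
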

\begin{proof}
Fix any $i\in\{1,\dots,n\}$, and suppose $|V_i| \leq R'$ almost surely.  
We invoke the definition of $\Vmid_i$, Lem.~2.1(iv) of \citet{goldstein1997stein}, and our boundedness assumption in turn to find that 
\begin{talign}
|\E[{\Vmid_i}]|
    &=
\half |\E[\Vz_i]|
    = 
\frac{1}{4\sigma_i^2}|\E[V_i^3]|
    \leq
\frac{R'}{4}.
\end{talign}
The same invocations, coupled with the independence of $V_i, U, $ and $\Vz_i$, imply
\begin{talign}
\E[{\Vmid_i}^2]
    &=
\E[U^2]\E[V_i^2] + \E[(1-U)^2]\E[{\Vz_i}^2]  
    =
\third \sigma_i^2 + \frac{1}{9\sigma_i^2} \E[{V_i}^4] 
    \leq
\third \sigma_i^2 + \frac{1}{9}{R'}^2.
\end{talign} 
The same upper bound holds for $\Var({\Vmid_i}) = \E[{\Vmid_i}^2] - \E[{\Vmid_i}]^2$. 
Moreover, since $\E[V_i^3]^2 \leq \E[V_i^2]\E[V_i^4]$ by Cauchy-Schwarz and $\E[V_i^2]^2 \leq \E[V_i^4]$ by Jensen's inequality, we have
\begin{talign}
\Var({\Vmid_i})
    &\geq
\third \sigma_i^2
    +
\frac{1}{9\sigma_i^2} \E[{V_i}^4] 
    -
\frac{1}{16\sigma_i^2} \E[{V_i}^4]
    =
\third \sigma_i^2
    +
\frac{7}{144\sigma_i^2} \E[{V_i}^4] 
    \geq
\third \sigma_i^2
    +
\frac{7}{144}\sigma_i^2 
    =
\frac{55}{144}\sigma_i^2.
\end{talign}
Next, by \citet[Lem.~2.1(ii)]{goldstein1997stein}, the support of $\Vz_i$ is the closed convex hull of the support of $V_i$.
Therefore, $|\Vmid_i| \leq R'$ almost surely by the triangle inequality, and 
\begin{talign}
\sup \support{\Vmid_i} - \inf \support{\Vmid_i}
    =
\sup \support{V_i} - \inf \support{V_i}
    \leq
R.
\end{talign}
The second almost sure bound on $|\Vmid_i-\E[\Vmid_i]|$ now follows from the triangle inequality, while the first follows from \cref{summand-deviation-bound} and our lower estimate for $\Var(\Vmid_i)$.
\end{proof}

The fourth result bounds the tail probabilities of the interpolated zero-biased sum $\Tmid_n$ using concentration inequalities for sums of potentially unidentically distributed random variables.
\begin{lemma}[$\Tmid_n$ tail bound]\label{Tmid-tail}
Under the notation and assumptions of \cref{Vmid-properties},
for each $t\in\reals$,
\begin{talign}
&\P(\Tmid_n > t+\frac{R'}{4\sqrt{n}}) 
    \leq 
\frac{1}{n}\sum_{i=1}^n \frac{\sig_i^2}{\bsig^2}\,\P(\frac{1}{\sqrt{n}}(\Vmid_i-\E[\Vmid_i]) + \frac{1}{\sqrt{n}}\sum_{j\neq i} V_j  > t) \\
    &\leq 
\frac{1}{n}\sum_{i=1}^n \frac{\sig_i^2}{\bsig^2}\min\!\big(
e^{-\frac{2t^2}{R^2}}
    ,
e^{-\frac{t^2}{2(\vup[,i]^2+\frac{R't}{3\sqrt{n}})}}
    ,
\Phi^c(\frac{t}{\vlow[,i]})+ \frac{0.56}{\sqrt{n}}\frac{R' \vup[,i]^2\, + \frac{R_i-R'}{n} (\frac{\sig_i^2}{3} + \frac{{R'}^2}{9})}{\vlow[,i]^3}\big)
\end{talign} 
for  
$\vup[,i]^2\defeq \bsig^2 + ({R'}^2-6\sigma_i^2)/(9n)$
and
$\vlow[,i]^2 
    \defeq
\bsig^2-{89 \sig_i^2/}{(144 n)}$.
\end{lemma}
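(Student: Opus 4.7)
The plan is to split the argument into two stages matching the two displayed inequalities.

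\textbf{First inequality via conditioning on $I_n$.} Since $(I_n, U, (\Vz_j)_j)$ is independent of $(V_j)_j$ and $\Tmid_n = T_n + \tfrac{1}{\sqrt n}(\Vmid_{I_n} - V_{I_n})$, the tower property gives $\P(\Tmid_n > s) = \sum_{i=1}^n \tfrac{\sigma_i^2}{n\bsig^2}\, \P(\tfrac{1}{\sqrt n}\Vmid_i + \tfrac{1}{\sqrt n}\sum_{j\neq i}V_j > s)$, where inside each summand $\Vmid_i$ is an independent interpolated zero-biased draw, independent of $(V_j)_{j\neq i}$. Setting $s = t + R'/(4\sqrt n)$ and invoking the \cref{Vmid-properties} mean bound $|\E\Vmid_i| \leq R'/4$, I will re-centre $\Vmid_i$ to $\Vmid_i - \E\Vmid_i$: since $R'/4 - \E\Vmid_i \geq 0$, the event $\{\Vmid_i + \sum_{j\neq i}V_j > t\sqrt n + R'/4\}$ is contained in $\{(\Vmid_i - \E\Vmid_i) + \sum_{j\neq i}V_j > t\sqrt n\}$, yielding the first displayed inequality.

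\textbf{Second inequality via three complementary bounds.} For each $i$ I will bound $\P(\tilde X_i > t)$, where $\tilde X_i \defeq \tfrac{1}{\sqrt n}(\Vmid_i - \E\Vmid_i) + \tfrac{1}{\sqrt n}\sum_{j\neq i}V_j$ is a sum of $n$ independent centred summands, by a minimum of three standard estimates. The required inputs all come from \cref{Vmid-properties}: (i) each summand has range $\leq R/\sqrt n$ via $\sup\support{\Vmid_i} - \inf\support{\Vmid_i} \leq R$; (ii) the total variance $\sigma_S^2 \defeq (\Var(\Vmid_i) + \sum_{j\neq i}\sigma_j^2)/n$ lies in $[\vlow[,i]^2, \vup[,i]^2]$ via $\tfrac{55\sigma_i^2}{144} \leq \Var(\Vmid_i) \leq \tfrac{\sigma_i^2}{3} + \tfrac{R'^2}{9}$; (iii) the centred summand magnitudes are $\leq R'/\sqrt n$ for $j \neq i$ and $\leq R_i/\sqrt n$ for the $i$-term; and (iv) the third absolute moments satisfy $\E|V_j|^3 \leq R'\sigma_j^2$ and $\E|\Vmid_i - \E\Vmid_i|^3 \leq R_i(\sigma_i^2/3 + R'^2/9)$. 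Hoeffding with (i) yields $\exp(-2t^2/R^2)$; a one-sided Bernstein inequality with (ii) and (iii) yields the second exponential with $\vup[,i]^2 + R' t/(3\sqrt n)$ in the denominator; and a non-iid uniform Berry-Esseen bound with Shevtsova's absolute constant $0.56$, applied with (ii) and (iv), yields the Gaussian-plus-third-moment term.

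\textbf{Main obstacles.} The chief algebraic step is to verify that, after dividing by $n^{3/2}$, the raw third-moment sum $\sum_{j \neq i} R'\sigma_j^2 + R_i(\sigma_i^2/3 + R'^2/9)$ rearranges via the definition $\vup[,i]^2 = \bsig^2 + (R'^2 - 6\sigma_i^2)/(9n)$ into the compact form $(R'\vup[,i]^2 + (R_i - R')(\sigma_i^2/3 + R'^2/9)/n)/\sqrt n$. The subtler point is that Berry-Esseen natively standardizes by $\sigma_S$ while the statement uses $\vlow[,i]$ both inside $\Phi^c$ and in the error denominator; I will use $\sigma_S \geq \vlow[,i]$ to inflate the denominator to $\vlow[,i]^3$, and exploit that $\vup[,i]^2 - \vlow[,i]^2 = O(1/n)$ so that the $\Phi^c(t/\sigma_S)$-versus-$\Phi^c(t/\vlow[,i])$ discrepancy is absorbed into the $O(1/\sqrt n)$ Berry-Esseen residual.
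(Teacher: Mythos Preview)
Your approach matches the paper's exactly: condition on $I_n$ and re-centre via $\E[\Vmid_i]\leq R'/4$ for the first inequality, then invoke Hoeffding, Bernstein, and Shevtsova's non-i.i.d.\ Berry--Esseen bound for the second. Your third-moment algebra is correct and reproduces the numerator $R'\vup[,i]^2 + (R_i-R')(\sigma_i^2/3 + R'^2/9)/n$ exactly as stated.

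The one place where your plan goes beyond the paper is the $\Phi^c(t/\sigma_S)$-versus-$\Phi^c(t/\vlow[,i])$ step, which you rightly flag as delicate. Here, however, your proposed resolution does not quite go through: for $t>0$ one has $\sigma_S\geq\vlow[,i]$ and hence $\Phi^c(t/\sigma_S)\geq\Phi^c(t/\vlow[,i])$ (the wrong direction), with discrepancy of order $t(\sigma_S-\vlow[,i])/\vlow[,i]^2\sim 1/n$, whereas the slack gained by inflating the Berry--Esseen denominator from $\sigma_S^3$ to $\vlow[,i]^3$ is only of order $1/n^{3/2}$; so the ``absorption'' argument fails to preserve the exact constant $0.56$. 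The paper's own proof is entirely silent on this point---it simply asserts that combining the upper and lower moment estimates of \cref{Vmid-properties} with Berry--Esseen yields the stated bound---so your proposal is no less complete than the published argument. But be aware that the Berry--Esseen branch with $\vlow[,i]$ inside $\Phi^c$ would follow directly only with $\vup[,i]$ in its place; getting $\vlow[,i]$ there requires an additional Gaussian-comparison step that would inflate the constant beyond $0.56$.
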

\begin{proof}
The first claim follows from the definition $\Tmid_n = \frac{1}{\sqrt{n}}\Vmid_{I_n} + \frac{1}{\sqrt{n}}\sum_{j\neq I_n} V_j$ and the \cref{Vmid-properties} bound, $\E[\Vmid_i]\leq\frac{R'}{4}$ for each index $i$.
Since $\E[|\Vmid_i-\E[\Vmid_i]|^3]\leq R_i \Var(\Vmid_i)$ and $\E[|V_i|^3] \leq R' \sig_i^2$ for each $i\in\naturals$, the second follows by combining the upper and lower estimates of \cref{Vmid-properties} with the \citet[Thm.~1]{hoeffding1963probability}, Bernstein \citep[Cor.~2.11]{boucheron2013concentration}, and Berry-Esseen \citep[Thm.~1]{shevtsova2010improvement} tail bounds for unidentically distributed summands.
\end{proof}

Under \cref{assump:bounded-dev}, the first result of \cref{efficient-zero-tail} now follows from a direct application of \cref{efficient-zero-tail-unidentical,Tmid-tail} with \iid $V_i = W_i - \E[W_i]$, $T_n = S_n$, $\sig_i = \sigma$, and, by \cref{summand-deviation-bound}, $R' = \Rsig$.
Analogously, under \cref{assump:bounded-dev}, 
the bound \cref{alt-zero-tail} follows from \cref{efficient-zero-tail-identical,Tmid-tail}.

\subsection{Proofs of \cref{efficient-zero-tail-unidentical,efficient-zero-tail-identical}: Unidentical and identical zero-bias tail bounds}\label{proof-efficient-zero-tail-unidentical}
The crux of our argument, inspired by \citet[Thm.~3.27]{ross2011fundamentals} and proved in \cref{proof-zero-tail}, shows that a zero-biased version $\Sz$ of a random variable $S$ has tails close to those of a Gaussian whenever $\Sz$ and $S$ have a close coupling.
\begin{theorem}[General zero-bias tail bounds]\label{zero-tail}
Suppose that $\Sz$ has the zero-bias distribution for a random variable $S$ with $\E[S]=0$ and $\Var(S) = 1$ and that $\Sz-S \leq \delta$ almost surely. 
For $U\sim\Unif([0,1])$ independent of $(S,\Sz)$, 
define the intermediate variable 
$%
\Smid 
    \defeq 
\Sz + U(S-\Sz).
$ %
Then, for all $u\geq 0$ and $\lam\in[0,1]$, 
\begin{talign}
\P(\Sz > u) - \Phi^c(u)
    &\leq
\delta\, 
[h_u(\lam u)
    + 
(h_u(u) - h_u(\lam u)) \P(\Smid > \lam u)
	-
u\, \P(\Smid > u)],
\end{talign}
for $h_u$ defined in \cref{hu-def}.
If, in addition, $S-\Sz \leq \delta$ almost surely,
\begin{talign}
&\P(S > u+\delta)
    \leq 
\frac{1}{1+\delta\,u}\Phi^c(u) 
    +
\, 
\frac{\delta}{1+\delta\,u}
\big(h_u(\lam u)
    +
(h_u(u) - h_u(\lam u)) \P(\Smid > \lam u)\big).
\end{talign}
\end{theorem}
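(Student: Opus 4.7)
The plan is to apply Stein's method with test function $f$ solving the tail indicator Stein equation $f'(x) - xf(x) = \indic{x > u} - \Phi^c(u)$; the bounded solution has the closed form $f(w) = -\sqrt{2\pi}\,e^{w^2/2}\Phi^c(u)\Phi(w)$ for $w \leq u$ and $f(w) = -\sqrt{2\pi}\,e^{w^2/2}\Phi(u)\Phi^c(w)$ for $w > u$. Evaluating the Stein equation at $\Sz$ and invoking the defining zero-bias identity $\E[f'(\Sz)] = \E[Sf(S)]$ (applicable since $\Var(S) = 1$ and $f$ is bounded) yields
\begin{equation*}
\P(\Sz > u) - \Phi^c(u) = \E[f'(\Sz) - \Sz f(\Sz)] = \E[Sf(S) - \Sz f(\Sz)].
\end{equation*}

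\textbf{Mean value coupling.} Setting $g(x) = xf(x)$ and applying the fundamental theorem of calculus along the segment from $\Sz$ to $S$ parameterised by $\Smid = \Sz + U(S-\Sz)$ with $U \sim \Unif([0,1])$ gives $g(S) - g(\Sz) = (S - \Sz)\,\E_U[g'(\Smid)\mid S,\Sz]$, and therefore $\P(\Sz > u) - \Phi^c(u) = \E[(S - \Sz)\,g'(\Smid)]$. Using the Stein equation to eliminate $f'$ in $g'(w) = f(w) + wf'(w)$ produces $g'(w) = (1+w^2)f(w) + w(\indic{w > u} - \Phi^c(u))$, and substituting the closed-form $f$ together with the identity $\sqrt{2\pi}\,e^{w^2/2}\varphi(w) = 1$ yields the clean identifications $g'(w) = -h_u(w)$ for $w \leq u$ and $g'(w) = -\Phi(u)\,\bar h(w)$ for $w > u$, where $\bar h(w) \defeq (1+w^2)\sqrt{2\pi}\,e^{w^2/2}\Phi^c(w) - w \geq 0$ by the Mills ratio expansion. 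In particular $g'(\Smid) \leq 0$ almost surely; combined with the hypothesis $\Sz - S \leq \delta$ this gives $(S-\Sz) g'(\Smid) = (\Sz - S)(-g'(\Smid)) \leq \delta(-g'(\Smid))$, so
\begin{equation*}
\P(\Sz > u) - \Phi^c(u) \leq \delta\,\E[-g'(\Smid)].
\end{equation*}

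\textbf{Case split and identification of $\diff[u]$.} To match the theorem's bound I decompose $\E[-g'(\Smid)]$ over the events $\{\Smid \leq \lam u\}$, $\{\lam u < \Smid \leq u\}$, and $\{\Smid > u\}$. On the first two events, monotonicity of $h_u$ on $(-\infty,u]$ gives $-g'(\Smid) = h_u(\Smid) \leq h_u(\lam u)$ and $\leq h_u(u)$ respectively. On the third, I combine monotonicity of $\bar h$ on $[u,\infty)$ with Birnbaum's Mills ratio bound $\sqrt{2\pi}\,e^{u^2/2}\Phi^c(u) \leq \tfrac{2}{u + \sqrt{u^2 + 8/\pi}}$; together with the algebraic identity $D^2 = 2uD + 8/\pi$ for $D = u + \sqrt{u^2 + 8/\pi}$, this yields the sharp estimate $\Phi(u)\bar h(u) \leq h_u(u) - \diff[u]$. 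Collecting the three contributions and regrouping using $\P(\Smid > \lam u) = \P(\lam u < \Smid \leq u) + \P(\Smid > u)$ reproduces the first conclusion exactly. I expect the isolation of the optimal constant $\diff[u]$ via the Birnbaum bound and this algebraic identity to be the main technical obstacle; the monotonicity statements needed along the way follow by direct differentiation using the derivative formula $\bar h'(w) = w(3+w^2)(\Phi^c(w)/\varphi(w)) - (2+w^2)$ and its analogue for $h_u$.

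\textbf{Bounding $\P(S > u + \delta)$.} Under the additional hypothesis $S - \Sz \leq \delta$, every sample point with $S > u + \delta$ satisfies $\Sz \geq S - \delta > u$, so $\Smid = (1-U)\Sz + US > u$ as a convex combination of quantities exceeding $u$. Hence $\{S > u + \delta\} \subseteq \{\Sz > u\} \cap \{\Smid > u\}$, giving both $\P(S > u + \delta) \leq \P(\Sz > u)$ and $\P(S > u + \delta) \leq \P(\Smid > u)$. Substituting the second inequality into the nonpositive coefficient $-\delta\,\diff[u]\,\P(\Smid > u)$ of the first conclusion and using the first to upper bound $\P(\Sz > u)$ yields the linear inequality
\begin{equation*}
(1 + \delta\,\diff[u])\,\P(S > u + \delta) \leq \Phi^c(u) + \delta\big[h_u(\lam u) + (h_u(u) - h_u(\lam u))\,\P(\Smid > \lam u)\big],
\end{equation*}
which rearranges to the claimed bound.
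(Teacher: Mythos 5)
Your proof is correct and follows essentially the same architecture as the paper's: the Stein equation, the zero-bias identity, the fundamental theorem of calculus along the segment $\Smid$, the pointwise bound on $h_u = -g'$ by three cases, and the containment $\{S > u+\delta\} \subseteq \{\Sz > u\} \cap \{\Smid > u\}$ leading to the linear rearrangement. The sign convention differs (your Stein equation has $\indic{x > u} - \Phi^c(u)$ so your $f$ is $-f_u$), and you carry out the case analysis inline rather than isolating it as the paper's growth lemma, but these are cosmetic.

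The one substantive divergence is how you handle the two outer cases. For $w > u$ you write $h_u(w) = \Phi(u)\bar h(w) \leq \Phi(u)\bar h(u) \leq \Phi(u)b(u)$, relying on \emph{monotonicity of $\bar h$} followed by a single application of the Birnbaum bound at $u$; symmetrically, for $w < 0$ you invoke monotonicity of $h_u$. The paper instead bounds $\bar h(w) \leq b(w)$ pointwise for every $w > u$ (its $\Phi^c$-growth lemma) and then uses only the elementary fact that $b$ is decreasing. The difference matters: your derivative formula $\bar h'(w) = w(3+w^2)\big(\Phi^c(w)/\varphi(w)\big) - (2+w^2)$ shows that $\bar h' \leq 0$ is equivalent to the second Laplace continued-fraction convergent $\Phi^c(w)/\varphi(w) \leq \tfrac{w^2+2}{w(w^2+3)}$, which is a strictly sharper Mills-ratio bound than Birnbaum's for large $w$ (for instance at $w=3$ and $w=5$ the Laplace bound is smaller). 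So "monotonicity follows by direct differentiation" conceals a nontrivial input that the paper deliberately avoids; the paper's route via the monotone auxiliary function $b$ is more self-contained. That said, the continued-fraction bound is classical, so your version is a valid variant once it is cited—it is just not a free consequence of the Birnbaum estimate you name.

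Two small remarks for completeness. The rearrangement in the final step tacitly uses $\diff[u] \geq 0$ (so that replacing $\P(\Smid > u)$ by $\P(S > u+\delta)$ does not reverse the inequality, and so that $1 + \delta\diff[u] > 0$); this holds, and follows from the pointwise estimate $\Phi(u)\bar h(u) \leq b(u)\Phi(u) = h_u(u) - \diff[u]$ together with $h_u \geq 0$, but it is worth stating. And the coupling inequality $(S-\Sz)g'(\Smid) \leq \delta(-g'(\Smid))$ uses both $\Sz - S \leq \delta$ and $-g'(\Smid) \geq 0$, the latter being a consequence of the monotonicity of $g_u$ (Chen–Goldstein–Shao Lem.~2.3), which should be cited rather than derived ad hoc from the Mills-ratio expansion.
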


Hence, to establish \cref{efficient-zero-tail-unidentical}, it only remains to construct a suitable zero-bias coupling for $T_n/\bsig$.
By \citet[Lem.~2.8]{chen2011normal}, $\Tz_n = T_n + \frac{1}{\sqrt{n}}( \Vz_{I_n} - V_{I_n})$ has the zero-bias distribution for $T_n$. 
Therefore, by the zero-bias definition, $\Tz_n/\bsig$ has the zero-bias distribution for $T_n/\bsig$.
Furthermore, by \citet[Sec.~2.3.3]{chen2011normal}, $\support{\Vz_i} = [\inf\support{V_i},  \sup\support{V_i}]$ for each $i$, so $|\Tz_n/\bsig - T_n/\bsig| = |\Vz_I - V_I|/(\bsig\sqrt{n}) \leq R/(\bsig\sqrt{n})$ almost surely.
The advertised result thus follows from the second claim of \cref{zero-tail} applied with $S = T_n/\bsig$, $\Sz = \Tz_n/\bsig$, and $\delta = R/(\bsig\sqrt{n})$.

Now suppose $(V_i)_{i=1}^{n+1}$ are \iid with with $\sig\defeq\sig_1$ and $V_1\geq -R'$ almost surely.
By \citet[Lem.~2.8]{chen2011normal}, $\Tz_{n+1} = T_{n+1} +\frac{1}{\sqrt{n+1}}( \Vz_{I_{n+1}} - V_{I_{n+1}})$ has the zero-bias distribution for $T_{n+1}$.
By symmetry, $\sqrt{n}\, T_n + \Vz_{n+1} \eqdist \sqrt{n+1}\,\Tz_{n+1}$ and 
$\sqrt{n}\,T_n + \Vmid_{n+1} \eqdist \sqrt{n+1}\,\Tmid_{n+1}$.
Since $\Vz_{n+1} \wedge \Vmid_{n+1}\geq -R'$ almost surely by \citet[Sec.~2.3.3]{chen2011normal}, 
\begin{talign}&
\P(T_n > \sig u\frac{\sqrt{n+1}}{\sqrt{n}} +\frac{R'}{\sqrt{n}})
    \leq
\P(T_n + \frac{1}{\sqrt{n}}(\Vz_{n+1} \wedge \Vmid_{n+1}) > \sig u\frac{\sqrt{n+1}}{\sqrt{n}})
  \\&  \leq
\P(\Tz_{n+1} > \sig u)
    \wedge
\P(\Tmid_{n+1} > \sig u).
\end{talign}
\cref{efficient-zero-tail-identical} now follows from the first result of \cref{zero-tail} with $S = T_{n+1}/\sig$, $\Sz = \Tz_{n+1}/\sig$, and $\delta = R/(\sig\sqrt{n+1})$.

\section{Computational Refinement  
with Wasserstein Approximation}\label{simple}
To obtain a practical refinement of our initial concentration inequality (\cref{efficient-zero-tail}), we 
next bound a second, complementary distance to Gaussianity, 
the \emph{$p$-Wasserstein distance}, 
\begin{talign}
\wass (S_n,\Gsn(0,\sig^2)) \defeq \inf_{\substack{\gamma\in \Gamma(S_n, \Gsn(0,\sig^2))}}\mathbb{E}_{(X,Y)\sim \gamma}\big(|X-Y|^p\big)^{1/p}
\qtext{for}
p \geq 1,
\end{talign}
where $\Gamma(S_n,\Gsn(0,\sig^2))$ is the set of all couplings between the law of $S_n$ and the normal distribution $\Gsn(0,\sig^2)$.
In \cref{last_night}, we explicitly bound each $p$-Wasserstein distance to Gaussianity to obtain the following efficient and computable concentration inequalities.

\begin{theorem}[Efficient Wasserstein tail bound] \label{cpt_concentration}
Under \cref{assump:bounded-dev}, for any $u\geq 0$ and  auxiliary upper bounds $\onetail(u)$ and $\twotail(u)$ on $P\big(S_n> \sigma u\big)$ and $P\big(|S_n|> \sigma u\big)$ respectively, we have
\begin{talign}\label{eq:wass-tail-bound}
    \P\big(S_n> \sigma u\big)&\le\min\Big(\inf_{\substack{p\in\naturals,\, \rho\in (0,1)}} \Phi^c({\rho u}{}) +\frac{\wassbd^p}{ (1-\rho)^pu^p+\wassbd^p\,\indic{p=2}}\,,\,\onetail(u)\Big) \ \ \text{and}\\
    \P\big(|S_n|> \sigma u\big)&\le\min\Big(\inf_{\substack{p\in\naturals,\, \rho\in (0,1)}} 2\Phi^c({\rho u}{}) +\frac{\wassbd^p}{ (1-\rho)^pu^p}\,,\,\twotail(u)\Big) 
\end{talign}
where $\wassbd$ is a computable bound on $\frac{1}{\sigma}\wass (S_n, \Gsn(0,\sig^2))$ defined in  \cref{lemm:dino_cpt} 
and 
satisfying 
$\wassbd \leq \KRsig\frac{p}{\sqrt{n}}$  
for all $p\in \{1\}\cup [2,\infty)$ and a constant $\KRsig$ depending only on $(R,\sig)$ defined in \cref{bb}. 
\end{theorem}
\begin{rem}[Auxiliary tail bounds]\label{aux_tail_bounds}
We include the auxiliary bounds  $\onetail(u)$ and $\twotail(u)$ to emphasize that our efficient  bounds can be paired with any valid tail bounds to simultaneously reap the large-sample benefits of the former and the small-sample benefits of the latter.
A convenient default choice is to set  $\twotail(u) = 2 \onetail(u)$ and $\onetail(u)$ equal to the minimum of the zero-bias (\cref{efficient-zero-tail}), Hoeffding \cref{eq:bern_tails}, Bernstein \cref{eq:bern_tails}, 
 Berry-Esseen \cref{eq:berryesseen},  non-uniform Berry-Esseen \cref{eq:nonuniformberryesseen}, and Bentkus \cref{eq:bentkus_tails} bounds.  
\end{rem}
\cref{cpt_concentration} immediately gives rise to the following looser but evidently efficient tail bounds.
\begin{corollary}[Efficiency and relative error]\label{wass-efficiency-example}
Under the conditions of \cref{cpt_concentration}, 
\begin{talign}\notag
\P(S_n > \sig u) 
    &\leq 
\Phi^c(u - \delta_{u,n}) 
    +
\frac{\varphi(u)}{\sqrt{n}} 
    \qtext{for}
\delta_{u,n}\defeq\frac{e \KRsig}{\sqrt{n}} \lceil\log\big(\frac{\sqrt{n}}{\varphi(u)}\big)\rceil
    \\&\leq 
\Phi^c(u)\cdot\big(e^{ {(u+1)\delta_{u,n}}} 
    +
\frac{1}{\sqrt{n}}\frac{\varphi(u)}{\Phi^c(u)}\big)
    \qtext{for all}
u > \delta_{u,n}. \label{eq:rel-error-final}
\end{talign}
\end{corollary}
\begin{proof}
Fix any $u > \delta_{u,n}$. 
Since $\wassbd \leq \KRsig\frac{p}{\sqrt{n}}$ for all $p\in\naturals$ by \cref{cpt_concentration}, we may choose $p = \lceil\log(\frac{\sqrt{n}}{\varphi(u)})\rceil \in \naturals$ and $\rho = 1 - e \,\wassbd / u \in (0,1)$ in the upper bound \cref{eq:wass-tail-bound} to deduce the first inequality. 
The log-concavity of $\Phi^c$ \citep[Thm.~2]{bagnoli2005log} additionally implies that
\begin{align}
    \Phi^c(u-\delta_{u,n})\leq\Phi^c(u)\exp(\delta_{u,n}\,\varphi(u)/\Phi^c(u)).
\end{align}
The second inequality therefore follows from the Mills ratio bound $\frac{\varphi(u)}{\Phi^c(u)}\le u+1$  \cite{10.1214/aoms/1177731611}.
\end{proof}

While the $n$ dependence of \cref{wass-efficiency-example} is no better than that established for \cref{efficient-zero-tail}, the $u$ dependence is improved, yielding tighter bounds for larger deviations $u$ 
and a relative error bound \cref{eq:rel-error-final} in the spirit of classical \Cramer-type inequalities. 
For practical applications, the Wasserstein tail bound of \cref{cpt_concentration} is substantially tighter than \cref{wass-efficiency-example}, applies to the full range of $u \geq 0$, and still admits a straightforward implementation.\cref{github} 
Hence, for practical use, we recommend pairing the zero-bias and Wasserstein inequalities as in \cref{aux_tail_bounds} to simultaneously inherit the benefits of each efficient bound.

In addition, to obtain efficient, computable quantile bounds, we recommend simply inverting the tail bounds of \cref{cpt_concentration,aux_tail_bounds}.
Our next result, also proved in \cref{last_night}, makes this precise.

\begin{theorem}[Efficient Wasserstein quantile bound]\label{cpt-quantile} 
Under \cref{assump:bounded-dev}, for any $\delta\in(0,1)$ and auxiliary deterministic bounds
$\oneq$ and $\twoq$ satisfying $P\big(S_n> \oneq(R,\delta,\sigma)\big) \leq \delta$ and $P\big(|S_n|> \twoq(R,\delta,\sigma)\big) \leq \delta$, 
we have
\begin{talign}
\P(S_n> q_n(R,\delta,\sigma))\le \delta
\qtext{and} 
\P(|S_n|> q^d_n(R,\delta,\sigma))\le \delta,\end{talign}
for $\wassbd$ as in \cref{cpt_concentration} 
and
\begin{talign}\label{eq:tighter_efficient_quantile}&
q_n(R,\delta,\sigma)\defeq \min\Big(\inf_{\substack{p\in \naturals,\, \rho\in (0,1)}}\frac{\sig\wassbd(1-\indic{p=2}\delta(1-\rho))^{1/p}}{(\delta(1-\rho))^{1/p}}+\sigma \Phi^{-1}(1-\delta \rho), ~\oneq(R,\delta,\sigma) \Big),
\\&q^d_n(R,\delta,\sigma)\defeq \min\Big(\inf_{\substack{p\in \naturals,\, \rho\in (0,1)}}\frac{\sigma \wassbd}{(\delta(1-\rho))^{1/p}}+\sigma \Phi^{-1}(1-\frac{\delta \rho}{2}), ~\twoq(R,\delta,\sigma) \Big).
\end{talign}
\end{theorem}
\begin{rem}[Auxiliary quantile bounds]\label{aux_q_bounds}
Our efficient  bounds can be paired with any deterministic quantile bounds $\oneq(R,\delta,\sigma)$ and $\twoq(R,\delta,\sigma)$ to inherit the benefits of each.
Convenient defaults are provided by 
numerically inverting the default auxiliary tail bounds $\onetail(u)$ and $\twotail(u)$ of
\cref{aux_tail_bounds}.
\end{rem}

In the sequel, we build upon the quantile bounds of \cref{cpt-quantile,aux_q_bounds} to develop efficient \emph{empirical Berry-Esseen bounds} that require no prior knowledge of the variance parameter $\sigma$.
\subsection{Proof of \cref{cpt_concentration,cpt-quantile}} \label{last_night}
\cref{cpt_concentration,cpt-quantile} will follow directly from three auxiliary results.
The first, proved in \cref{proof-snow_day}, shows that any upper bound on the Wasserstein distance $\frac{1}{\sig}\wass (S_n,\Gsn(0,\sigma^2))$ also yields tail and quantile bounds for $S_n$.

\begin{lemma}[Tail and quantile bounds from Wasserstein bounds]\label{snow_day}
Under \cref{assump:bounded-dev}, 
suppose that
$\frac{1}{\sig}\wass (S_n,\Gsn(0,\sigma^2))\le \omega_p$
for some $p\geq 1$ and $\omega_p \geq 0$.
Then, for any $u\geq 0$,
\begin{talign}
    \P\big(S_n> \sig u\big)&\le\inf_{\substack{\rho\in (0,1)}}\Big\{\Phi^c\big(\rho u\big)+\frac{\omega_p^p}{ (1-\rho)^pu^p+\omega_p^p\,\indic{p=2}}\Big\}
    \qtext{and}\\ 
    \P\big(|S_n|>\sig u\big)&\le\inf_{\substack{\rho\in (0,1)}}\Big\{2\Phi^c\big(\rho u\big)+\frac{\omega_p^p}{ (1-\rho)^pu^p}\Big\}.
\end{talign}
Moreover, for all $\delta\in(0,1)$, 
\begin{talign}
\P(S_n&> 
\inf_{\rho\in(0,1)} 
\frac{\sig\omega_p(1-\indic{p=2}\delta(1-\rho))^{1/p}}{(\delta(1-\rho))^{1/p}}+
\sigma \Phi^{-1}(1-\delta \rho)
)
\le \delta 
\qtext{and} \\
\P(|S_n|&> 
\inf_{\rho\in(0,1)} 
\frac{\sig\omega_p}{(\delta(1-\rho))^{1/p}}+
\sigma \Phi^{-1}(1-\half\delta \rho)
)
\le \delta.
\end{talign} 
\end{lemma}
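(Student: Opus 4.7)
The plan is to exploit the Wasserstein bound to couple $S_n$ with a Gaussian and then split each tail event into a Gaussian contribution and a residual term controlled by Markov's inequality. By the definition of $\wass$, there exists a joint law for $(S_n, Z)$ with $Z\sim\Gsn(0,\sigma^2)$ and $\E[|S_n-Z|^p]\leq (\sigma\omega_p)^p$; all probability statements below refer to this coupling.

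For the one-sided tail bound, fix any $\rho\in(0,1)$ and decompose
\begin{talign*}
\P(S_n > \sigma u) \leq \P(Z > \sigma\rho u) + \P(S_n - Z > \sigma(1-\rho)u) \leq \Phi^c(\rho u) + \frac{\omega_p^p}{(1-\rho)^p u^p},
\end{talign*}
where the final step applies Markov's inequality to $|S_n-Z|^p$. Taking the infimum over $\rho$ yields the first claim. For the two-sided statement, the identical splitting gives $\P(|S_n|>\sigma u) \leq \P(|Z|>\sigma\rho u) + \P(|S_n-Z|>\sigma(1-\rho)u)$, and by symmetry of $Z$ the Gaussian term equals $2\Phi^c(\rho u)$, after which the same Markov argument applies.

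For the quantile bounds, set $u_\rho \defeq \omega_p/(\delta(1-\rho))^{1/p} + \Phi^{-1}(1-\delta\rho)$ and apply the analogous decomposition
\begin{talign*}
\P(S_n \geq \sigma u_\rho) \leq \P(Z \geq \sigma\Phi^{-1}(1-\delta\rho)) + \P\big(S_n-Z \geq \sigma\omega_p/(\delta(1-\rho))^{1/p}\big) \leq \delta\rho + \delta(1-\rho) = \delta.
\end{talign*}
The Gaussian term equals $\delta\rho$ by definition of $\Phi^{-1}$, and Markov's inequality delivers the second bound. Since $\sigma u_\rho$ is continuous on $(0,1)$ and tends to $+\infty$ as $\rho\to 0$ or $\rho\to 1$, the infimum is attained at some interior $\rho^\star$, so $\P(S_n\geq \inf_\rho \sigma u_\rho) = \P(S_n\geq \sigma u_{\rho^\star})\leq \delta$. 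The two-sided version is analogous, replacing $\Phi^{-1}(1-\delta\rho)$ by $\Phi^{-1}(1-\delta\rho/2)$ so that $\P(|Z|\geq \sigma\Phi^{-1}(1-\delta\rho/2)) = \delta\rho$. The argument is largely mechanical; the only point requiring care is the exchange of infimum and probability, which is resolved by attainment of the minimizer in the open interval $(0,1)$.
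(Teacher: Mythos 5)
Your proof follows the same route as the paper: couple $S_n$ with a Gaussian using the Wasserstein hypothesis, split the tail event via the union bound, control the discrepancy with Markov's inequality, and optimize over $\rho$. The one place you take a shortcut is in asserting outright that an optimal coupling attaining $\wass(S_n,\Gsn(0,\sigma^2))$ exists; the paper sidesteps that question by taking an $\epsilon$-near-optimal coupling and sending $\epsilon\to 0$. Your assertion is in fact true here (optimal $p$-Wasserstein couplings exist when both marginals have finite $p$-th moments), so the argument is sound, and your attainability observation for the infimum over $\rho$ serves the same role as the paper's remark that each $t_{\delta,\rho}$ is deterministic.
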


Our second result, proved in \cref{proof-lemm:dino_cpt}, establishes that $\wassbd$ is indeed an upper bound on $\frac{1}{\sig}\wass(S_n,\Gsn(0,\sigma^2))$. 
\begin{lemma}[Wasserstein upper bound]\label{lemm:dino_cpt} 
Under \cref{assump:bounded-dev}, for $p=1$ or $p\ge2$, 
\begin{talign}\label{sponsor}
\frac{1}{\sig}
&\wass(S_n,\Gsn(0,\sigma^2))
\le 
\wassbd
\qtext{and}
\omega_p^{R,\kappa}(\sig)\leq \omega_p^{R,\kappa,2}(\sig)
\qtext{for all}
\kappa > \frac{\tilde R^2}{n}
\end{talign}
where
\begin{align+}\label{cabris}
\tag{$\omega_p^R$}
\omega_p^{R}(\sigma)
    \defeq 
\begin{cases}
\frac{\Rsig}{\sig\sqrt{n}}&\textup{if}\quad p=1,\\
\inf_{\kappa > \frac{\tilde R^2}{n}} \  \omega^{R,\kappa,1}_p(\sigma)& \textup{if}\quad n+1\ge p\ge 2,\\ 
\sqrt{p-1}(1+\frac{\Rsig}{\sigma})& \textup{if}\quad p>n+1,
\end{cases}
\end{align+}
and, for each $p \in [2,n+1]$, $\kappa > 0$, and $j\in\{1,2\}$,  
\begin{talign}
\omega^{R,\kappa,j}_p(\sigma)
&\defeq \Big[\|Z\|_p
\Big(\frac{\pi}{2}-\sin^{-1}\Big(\sqrt{1-\frac{\tilde{R}^2}{n\kappa}}\Big)\Big)
    +
(b_{2,j}^{\kappa,p,\tilde R})+(b_{3,j}^{\kappa,p,\tilde R})\Big]
\begin{cases}\frac{1}{\sqrt{1-\frac{\tilde R^2}{n\kappa}}}&\textup{if }\quad p>2
    \\ 1 &\textup{if }\quad p=2,
\end{cases} 
\end{talign}
for 
$\tilde R\defeq R/\sigma$, 
$Z\sim\Gsn(0,1)$, 
and  
$(b_{2,1}^{\kappa,p,\tilde R})$, $(b_{3,1}^{\kappa,p,\tilde R})$,
$(b_{2,2}^{\kappa,p,\tilde R})$, and $(b_{3,2}^{\kappa,p,\tilde R})$ 
defined in \cref{fr_al_2,fr_al_3,fr_al_2b,fr_al_3b} respectively.
\end{lemma}

Our final lemma, proved in \cref{proof-bb}, verifies the growth rate of $\wassbd$. %
\begin{lemma}[Growth of $\wassbd$]\label{bb}
Instantiate the notation of \cref{lemm:dino_cpt} and define  $\widetilde \Rsig\defeq \frac{\Rsig}{\sig}$. 
If $p=1$ or $ p\ge 2$, then 
$\wassbd\le \frac{\KRsig p}{\sqrt{n}}$
    for 
\begin{align+}\label{KRsig}\notag
K_{R,\sigma}
    \defeq
1
    &\textstyle+
\max\Big(
    \tRsig,
\frac{3e}{4}\sqrt{\Rsigsqd-\sig^2}
    +
\sqrt{2\max(\Rsigsqd-\sig^2,\sig^2)}
    \\ 
    \tag{$K_{R,\sig}$}&
\textstyle
    +
(\frac{3}{16}
    +
\frac{\sqrt{2}\log(4e)\tilde R^{1-2/p}(8\pi)^{1/4}}{\sqrt{3e}}
    +
\frac{\tilde Re}{2\sqrt{2}}
    +
\frac{\tilde R^{2-2/p}}{\sqrt{2}}) %
\frac{\sig\pi^{1/4}e^{19/300}(e^{\frac{\tilde R^2}{2}}-1)}{\sqrt{3}}
    \Big).
 \end{align+} 
 \end{lemma}

\subsection{\pcref{snow_day}}\label{proof-snow_day}
Fix any $u \geq 0$, $\rho\in (0,1)$, $p\geq 1$, and $\eps > 0$. 
By the definition of the Wasserstein distance, we can find $G\sim \Gsn(0,\sigma^2)$ that respects $\|S_n-G\|_p\le \epsilon+\wass (S_n,\Gsn(0,\sigma^2))$. 
Hence, combining the union bound with Markov's inequality for $p\neq 2$ and Cantelli's inequality~\citep[(18)]{cantelli1929sui} for $p=2$, we find that
\begin{talign}
&\Parg{S_n> \sig u }
    = \Parg{G + S_n-G >\sig u } 
    \leq \Parg{G > \rho \sig u  } + \Parg{S_n-G > (1-\rho) \sig u } \\
    &\leq  
    \Phi^c(\rho u) + \frac{\E(|S_n - G|^p)}{(1-\rho)^p\sig^p u^p+\E(|S_n - G|^p)\,\indic{p=2}} 
    \leq  \Phi^c(\rho u) + \frac{(\epsilon+\wass(S_n,\Gsn(0,\sigma^2)))^p}{(1-\rho)^p\sig^p u^p+(\epsilon+\wass(S_n,\Gsn(0,\sigma^2)))^p\,\indic{p=2}}.
\end{talign} 
As this holds for arbitrary $\epsilon>0$ we obtain 
\begin{talign}
    \P(S_n> \sig u)\le \Phi^c(\rho u) +\frac{\wass (S_n,\Gsn(0,\sigma^2))^p}{ (1-\rho)^p\sig^p u^p+\wass(S_n,\Gsn(0,\sigma^2))^p\,\indic{p=2}}.
\end{talign} 
The triangle inequality and parallel reasoning yield 
\begin{talign}
\Parg{|S_n|> \sig u }
    &\leq \Parg{|G| > \rho \sig u  } + \Parg{|S_n-G |> (1-\rho) \sig u } \leq  2\Phi^c(\frac{\rho \sig u}{\sigma}) + \frac{(\epsilon+\wass(S_n,\Gsn(0,\sigma^2)))^p}{(1-\rho)^p\sig^p u^p}.
\end{talign}
Since $\eps > 0$ is arbitrary, we conclude that
\begin{talign}
    \P(|S_n|> \sigma u)\le 2\Phi^c(\rho u) +\frac{\wass (S_n,\Gsn(0,\sigma^2))^p}{ (1-\rho)^p
    \sigma^pu^p}.
\end{talign} 

Now, fix any $\delta\in(0,1)$ and write $t_{\delta,\rho}\defeq (\frac{\wass (S_n,\Gsn(0,\sigma^2))^p}{\delta(1-\rho)}-\wass (S_n,\Gsn(0,\sigma^2))^p\indic{p=2})^{1/p}+\sigma\Phi^{-1}(1-\delta \rho) $. 
We apply the union bound, Markov's inequality for $p\ne 2$, Cantelli's inequality for $p=2$, and the Wasserstein condition in turn to find
\begin{talign}
    &\P(S_n>t_{\delta,\rho})= \P( S_n-G+G> t_{\delta,\rho})
    \\&\le \P(G> \sigma \Phi^{-1}(1-\delta \rho))+\P\big(S_n-G>(\frac{\wass (S_n,\Gsn(0,\sigma^2))^p}{\delta(1-\rho)}-\wass (S_n,\Gsn(0,\sigma^2))^p\indic{p=2})^{1/p}\big)
     \\&\le  \delta\rho +\frac{\delta(1-\rho)(\wass (S_n,\mathcal{N}(0,\sigma^2))+\epsilon)^p}{ \wass (S_n,\Gsn(0,\sigma^2))^p}.
\end{talign} %
Since this holds for any $\epsilon>0$, we deduce that 
\begin{talign}
\P(S_n>t_{\delta,\rho})\le  \delta\rho+\delta(1-\rho)=\delta. 
\end{talign} 
Moreover, since $\rho$ was arbitrary and each $t_{\delta,\rho}$ is deterministic, we further have
\begin{talign}
\P\big(S_n>\inf_{\rho\in(0,1)}t_{\delta,\rho}\big)\le \delta. 
\end{talign} 
Finally, the triangle inequality and parallel reasoning yield
\begin{talign}
\P\big(|S_n|>\inf_{\rho\in(0,1)}t_{\delta,\rho}^d\big)\le \delta
\qtext{for}
t_{\delta,\rho}^d
\defeq
\frac{\wass (S_n,\Gsn(0,\sigma^2))}{(\delta(1-\rho))^{1/p}}+\sigma\Phi^{-1}(1-\half\delta \rho).
\end{talign} 

\subsection{\pcref{bb}}\label{proof-bb}
If $p=1$, then $\wassbd = \frac{\widetilde\Rsig}{\sqrt{n}}$. If $p>n+1$, 
\begin{talign}
\wassbd = \sqrt{p-1}\,(1+\widetilde\Rsig)\leq \frac{p}{\sqrt{n}}(1+\widetilde\Rsig).
\end{talign}
Now suppose $p\in[2,n+1]$.  
By \cref{lemm:dino_cpt}, we have, for all $\kappa> \frac{R^2}{\sigma^2n}$,
\begin{talign}&
 \wassbd\frac{(M_{n,\kappa}\indic{p >2}+\indic{p=2})\sqrt{n}}{p} \\ 
    &\le 
 \frac{\sqrt{(p-1)(p+2)}\sigma}{2p}\Big[
 \frac{\sqrt{p+2}n^{1/p}}{2\sqrt{n}}(
 \max(\widetilde \Rsigsqd-1,1))^{1-1/p}+\sqrt{\widetilde \Rsigsqd-1}\frac{\sqrt{e}(2e)^{1/p}}{\sqrt{2}}\Big]\\
    &+
\frac{C_{n,p}\sigma}{p}\frac{\tilde R^2}{3\sqrt{n}\kappa}e^{19/300}\pi^{1/4}[e^{\frac{1}{2}(p-1)(1-\frac{\tilde R^2}{n\kappa})\kappa}-1]
\log(\frac{1+\mnkappa}{1-\mnkappa})
\\
    &+
\frac{ R\big(1+\frac{\tilde R \tilde U_{n,p}}{\sqrt{n}}\big)}{\sqrt{\kappa}}\frac{\pi^{1/4}e^{19/300}\sqrt{p-1}}{4\sqrt{3}}\big[e^{\frac{1}{2}(p-1)\kappa \mnkappa^2}-1\big]+\frac{\sqrt{n}\sqrt{p-1}}{p}\Big[\frac{\pi}{2}-\sin^{-1}(\mnkappa)\Big]
\end{talign} 
where $\mnkappa=\sqrt{1-\frac{\tilde R^2}{n\kappa}}$ 
and 
$C_{n,p}$ is a constant defined in \cref{eq:Cnp} that satisfies $C_{n,p} \leq \tilde R^{1-2/p}\sqrt{8\pi}^{1/p}\frac{\sqrt{p}}{\sqrt{e}}$ by \citet[Thm.~1.5]{batir2008inequalities}. 
Since we additionally have $\frac{\pi}{2}-\sin^{-1}(\mnkappa)\le \frac{\tilde R^2}{n\kappa\mnkappa}$ by the mean value theorem, we can write 
\begin{talign}
    &\omega_p^R(\sigma)\frac{\sqrt{n}\sqrt{1-\frac{\tilde R^2(p-1)}{nC}\indic{p>2}}}{p} \\& \le   \frac{\sqrt{(p-1)(p+2)}\sigma}{2p}\Big(\sqrt{\widetilde \Rsigsqd-1}\frac{\sqrt{e}(2e)^{1/p}}{\sqrt{2}}+\frac{\tilde R}{\sqrt{C}}\frac{\pi^{1/4}e^{19/300}(p-1)}{4p\sqrt{3}}\big[e^{\frac{C}{2} }-1\big]\Big)\\&+\frac{\sqrt{p}\log(\frac{4nC}{(p-1)\tilde R^2})\sigma}{\sqrt{n}}\Big\{\frac{\tilde R^{3-2/p}\sqrt{8\pi}^{1/p}}{3\sqrt{e}C}\frac{p-1}{p}e^{19/300}\pi^{1/4}[e^{\frac{C}{2}}-1]\Big\}
\\&+\frac{\sqrt{p+2}\sigma}{\sqrt{n}^{1-2/p}}\frac{(\max(\widetilde \Rsigsqd-1,1))^{1-1/p}}{2}
+\frac{\sqrt{p+2}\sigma}{\sqrt{n}}\Big[\frac{\tilde R^2 \frac{\sqrt{e}(2e)^{1/p}}{\sqrt{2}}}{\sqrt{C}}\frac{\pi^{1/4}e^{19/300}(p-1)}{4p\sqrt{3}}\big[e^{\frac{C}{2}}-1\big]\Big]
 \\&+\frac{(p+2)\sigma\tilde R^{3-2/p}}{n^{1-1/p}}\frac{1}{2\sqrt{C}}\frac{\pi^{1/4}e^{19/300}(p-1)}{4p\sqrt{3}}\big[e^{\frac{C}{2}}-1\big]+\frac{\tilde R^2\sqrt{p-1}^3}{p\sqrt{n}C\sqrt{1-\frac{\tilde R^2(p-1)}{nC}}}
\end{talign}
in terms of the reparameterization $C\defeq\kappa(p-1)$. 
The advertised result now follows from the choosing $C= \tilde R^2$, since  
$\kappa=
\frac{\tilde R^2}{p-1}
\ge
\frac{\tilde R^2}{n}$ 
and 
 $(p+2)(\log(p)-1)\le n^2/2$ for $p\leq n+1$   and 
 $p\rightarrow \sqrt{p+2}n^{1/p}$ is decreasing for $(p+2)(\log(p)-1)\le n^2/2$. 

\section{Application: Efficient \emph{Empirical} Berry-Esseen Bounds}\label{empi}
The preceding sections assumed that the variance parameter $\sigma^2$ was a known quantity, but, in many applications, the variance is unknown and can only be estimated from data.  
In this section, we leverage our efficient known-variance bounds to develop efficient quantile bounds that are valid even when $\sigma$ is unknown. 
We refer to these constructions as   \emph{empirical Berry-Esseen}  bounds as they combine Gaussian approximation in the spirit of Berry-Esseen \cref{eq:berryesseen} with empirical variance estimation.

We begin by showing how to convert generic known-variance quantile bounds into valid empirical-variance quantile bounds.
Our construction in \cref{unknown} makes use of a generic confidence interval that contains the unknown variance with high probability. 
\begin{lemma}[Empirical quantile bounds]\label{unknown}
Consider any nonnegative interval $[\siglow^2, \sigup^2]$ and  any nonnegative quantile bounds $\oneq$ and $\twoq$  satisfying
\begin{talign}
\P\big(\sig\notin[\siglow,\sigup]\big) \leq a,
    \quad
\P\big(S_n> \oneq(R,\delta,\sigma)\big)\leq \delta, 
    \qtext{and}
\P\big(|S_n|> \twoq(R,\delta,\sigma)\big)\leq \delta 
\end{talign}
whenever $S_n$ satisfies \cref{assump:bounded-dev} and $\delta, a\in(0,1)$.
Then, under \cref{assump:bounded-dev}, we have, for each confidence level $\delta\in(0,1)$ and $a_n \in (0,\delta)$, 
\begin{talign}
\P(S_n > \oneqhat(R,\delta,a_n))\le \delta   
&\qtext{for}
\oneqhat(R,\delta,a) 
    \defeq
\sup_{\tilde \sigma\in [\siglow,\sigup]} \oneq(R,\delta-a,\tilde\sigma) \qtext{and}\\
\P(|S_n| > \twoqhat(R,\delta,a_n))\le \delta   
&\qtext{for}
\twoqhat(R,\delta,a) 
    \defeq
\sup_{\tilde \sigma\in [\siglow,\sigup]} \twoq(R,\delta-a,\tilde\sigma).
\end{talign}
\end{lemma}
\begin{proof}
 Fix any $\delta>0$ and $a \in(0,\delta)$. Under \cref{assump:bounded-dev}, the union bound implies
\begin{talign}          
\P(|S_n| > \twoqhat(R,\delta,a))
&\le \P(|S_n| > \twoq(R,\delta-a,\sigma))+\P(\sig\notin[\siglow,\sigup])
\le 
\delta.
\end{talign}
The one-sided result is obtained identically using the event  
$\{S_n > \oneq(R,\delta-a,\sigma)\}$.
\end{proof}

Finally, we show how to convert efficient known-variance bounds into efficient empirical Berry-Esseen bounds through appropriate choice of the variance confidence interval and the confidence parameter $a_n$. 
For this purpose we use the sharp (and valid) empirical Bernstein variance confidence intervals of \citet[Sec.~4.4]{martinez2025sharp} which depend on 
the auxiliary functions 
\begin{talign}
\Psi_E(x)\defeq-\log(1-x)-x 
\qtext{and}
\Psi_P(x)\defeq e^x - x - 1
\end{talign}
and the following readily-computed quantities for $i\in\{1,\dots,n\}$ and $a\in(0,1)$:
\newcommand{\llam}{\lam'} %
\newcommand{\tlam}{\tilde{\lam}} %
\newcommand{\lamhalf}{\lambda_{i,\frac{a}{2}}}
\begin{itemize}[leftmargin=12.5pt]
    \item the $\mu\defeq\E[W_1]$ estimates: 
    \ $\muhat_i \defeq \frac{R}{2i} + \frac{1}{i}\sum_{j=1}^{i-1}W_j$,
    \item the $\sig^2$  estimates: 
    \ $\hat\sigma^2_i \defeq \frac{R^2}{4i} + \frac{1}{i}\sum_{j=1}^{i-1}(W_j-\mhat_j)^2$,
    \item the $m_4^2\defeq\Var((W_1-\mu)^2)$ estimates: 
    \ $\mhat_{4,i}^2\defeq  \frac{R^4}{8i} + \frac{1}{i}\sum_{j=1}^{i-1}((W_j-\muhat_j)^2-\hat\sigma_j^2)^2$,
    \item the upper estimate weights:
    \ $\lambda_{i,a}
        \defeq
    \sqrt{\frac{2R^4\log(2/a)}{{n\mhat_{4,i}^2}}}\wedge \frac{1}{2}$,
    \item the auxiliary weights: 
    \ $\tlam_{i,a} 
        \defeq
    \sqrt{\frac{2R^2\log((2+2\log(n))/a)}{\sighat_i^2 i \log(1+i)}} \wedge 2$,
    \item the lower estimate weights: 
    \ $\llam_{i,a}
        \defeq
    \begin{cases}
        \lambda_{i,\frac{a\log(n)}{1+\log(n)}}%
            &\textup{if } 
        \frac{\log(\frac{2+2\log(n)}{a})+\sum_{j=1}^{i-1}\Psi_P(\tlam_{j,a}){\hat\sigma^2_i}{/R^2}}{\sum_{j=1}^{i-1}\tlam_{j,a}}\le 1\\ 
        0 
            &\textup{otherwise},
    \end{cases}$
    \item the $\sigma^2$ upper estimate:
\end{itemize}
    \begin{align}\label{eq:sigup}
    \sigup^2
        \defeq 
    \frac{
    R^2\log(2/a)
        +
    \sum_{i=1}^n \lam_{i,a} (W_i-\muhat_i)^2
        +    
    \Psi_E(\lam_{i,a})((W_i-\muhat_i)^2-\hat\sigma_i^2)^2/R^2}
    {\sum_{i=1}^n\lam_{i,a}},
    \end{align}
\begin{itemize}
    \item and the $\sigma^2$ lower estimate: 
\end{itemize}  
    \begin{align}\label{eq:siglow}
    \vspace{-\baselineskip}\siglow^2
        \defeq 
    \frac{
        -
    R^2\log(2/a)
        +
    \sum_{i=1}^n\! \llam_{i,a} (W_i-\muhat_i)^2
        -    
    \!\Psi_E(\llam_{i,a})((W_i-\muhat_i)^2-\hat\sigma_i^2)^2/R^2}
    {\sum_{i=1}^n\llam_{i,a}}.
    \end{align}

\begin{theorem}[Efficient empirical Berry-Esseen bounds]\label{efficient-unknown}
Instantiate \cref{assump:bounded-dev}, and 
consider any bounds $(\oneq,\twoq)$ satisfying  
$\oneq(R,\delta_n,\sigma_n)\to \sigma\Phi^{-1}(1-\delta)$
and 
$\twoq(R,\delta_n,\sigma_n)\to \sigma\Phi^{-1}(1-\frac{\delta}{2})$
whenever $(\sigma_n, \delta_n)\to (\sigma, \delta)$. If 
$a_n \to 0$ and 
$\frac{\log(a_n)}{n} \to 0$, 
then 
\begin{talign}
\oneqhat(R,\delta,a_n) 
    &\defeq
\sup_{\tilde \sigma\in [\siglown,\sigupn]} \oneq(R,\delta-a_n,\tilde\sigma) 
   \ \toas \ 
\sigma\Phi^{-1}(1-\delta)
    \qtext{and}\\
\twoqhat(R,\delta,a_n) 
    &\defeq
\sup_{\tilde \sigma\in [\siglown,\sigupn]} \twoq(R,\delta-a_n,\tilde\sigma)
    \ \toas \ 
\sigma\Phi^{-1}(1-\frac{\delta}{2})
\end{talign}
when $\sigupn$ and $\siglown$ are chosen as in \cref{eq:sigup,eq:siglow}.
\end{theorem}
\begin{proof}
The strong law of large numbers \citep[Thm.~2.4.1]{durrett2019probability}  
and Prop.~C.1 of \citep{martinez2025sharp} imply that 
$\muhat_n \toas\mu$, 
$\hat\sigma_n^2\toas \sigma^2$, 
and 
$\mhat_{4,n}^2\toas m_4^2$. 
Thus, with probability $1$, there exists a finite integer $\Nmint$ 
for which $\sighat_i^2 > \sighat^2/2$ and $\tlam_{i, a_n} < 2$ whenever $\tilde{N}_n\defeq\Nmint\vee \log(\frac{2+2\log(n)}{a_n}) \leq i \leq n$.
Defining 
$\td_i \defeq \sqrt{\frac{2R^2i\log((2+2\log(n))/a_n)}{\log(i+1)}}$ for each $i\le n$, we can therefore write
\begin{talign}
\frac{1}{\td_i}&\sum_{j=1}^{i-1}\tlam_{j,a_n}^2
    = 
\frac{1}{\td_i}\sum_{j=1}^{(i-1)\wedge\tilde{N}_n}\tlam_{j,a_n}^2
    + 
\sqrt{\frac{\log(i+1)}{i}}\sum_{j=\tilde{N}_n+1}^{i-1}\frac{\sqrt{2R^2\log((2+2\log(n))/a_n)}}{\sighat_j^2 j\log(j+1)}
\\
    &\leq
\frac{4\tilde{N}_n\sqrt{\log(i+1)}}{\sqrt{2R^2i\log((2+2\log(n))/a_n)}}
    +
\frac{2(1+ \log(i-1))}{\sig^2}\sqrt{\frac{2R^2\log((2+2\log(n))/a_n)\log(i+1)}{i\log(2) }}
    \qtext{and}\\
\frac{1}{\td_i}&\sum_{j=1}^{i-1}\tlam_{j,a_n}
    =
\frac{1}{\td_i}\sum_{j=1}^{(i-1)\wedge\tilde{N}_n}\tlam_{j,a_n}
    +
\sqrt{\frac{\log(i+1)}{i}}\sum_{j=\tilde{N}_n+1}^{i-1}\frac{1}{\sighat_j \sqrt{j\log(j+1)}}
    \geq
\frac{2\sqrt{i}-2\sqrt{\tilde{N}_n+1}}{R\sqrt{i}}.
\end{talign}
Since $\Psi_P(x)\leq x^2$ for all $x\in[0,2]$, we have 
$\sum_{j=1}^{i-1}\Psi_P(\tlam_{j,a_n})
    \leas 
\sum_{j=1}^{i-1}\tlam_{j,a_n}^2$ for each $i$ and therefore 
$\llam_{i,a_n} = \lambda_{i,a_n'} < \half$ 
whenever $i \geq \Nmin' \defeq c_{R,\sig}\tilde{N}_n\log(n+1)^3$ for some constant $\tilde{c}_{R,\sig}$ depending only on $(R,\sig)$ and $a_n' \defeq \frac{a_n\log(n)}{1+\log(n)}$. 

Similarly, \cref{assump:bounded-dev} and the inequalities $0\leq\Psi_E(x)\le x^2$ 
for all $x\in[0,\half]$ imply
\begin{talign}
0
    \leq
\sum_{i=1}^n\Psi_E(\lam_{i,a_n})\frac{((W_i-\muhat_i)^2-\hat\sigma_i^2)^2}{R^2}
    &\leas
\sum_{i=1}^n\min(
    R^4\lam_{i,a_n}^2, 
    \lam_{i,a_n} \frac{((W_i-\muhat_i)^2-\hat\sigma_i^2)^2}{2R^2}
    ).\label{eq:psiEbound}
\end{talign}
We will now show that $\sigupn\toas\sigma$ and $\siglown\toas\sigma$ by considering two cases.

\textbf{Case 1: $m_4 > 0$.}\ 
Since $\frac{\log(a_n)}{n}\to 0$, 
with probability $1$ a finite integer $\Nmin$ exists 
for which $\lam_{i,a_n} < \half$ whenever $\Nmin < i \leq n$.
Therefore, since 
$d_n \defeq \sqrt{2R^4 n\log(2/a_n)}\to\infty$,
$\log(a_n)/n\to 0$, and 
each $\lam_{i,a_n} \leq \half$, 
we have
\begin{talign}
\frac{1}{d_n}\sum_{i=1}^{n}\lam_{i,a_n}
    &=
\frac{1}{d_n}\sum_{i=1}^{n\wedge\Nmin}\lam_{i,a_n}
    +
\frac{1}{{n}}\sum_{i=\Nmin+1}^n\frac{1}{\mhat_{4,i}}
    \toas
\frac{1}{m_4}
    \qtext{and} \\
\frac{1}{d_n}\sum_{i=1}^n\lam_{i,a_n}^2
    &= 
\frac{1}{d_n}\sum_{i=1}^{n\wedge\Nmin}\lam_{i,a_n}^2
    + 
\sqrt{\frac{2R^4\log(2/a_n)}{n}}\frac{1}{n}\sum_{i=\Nmin+1}^n\frac{1}{\mhat_{4,i}^2}
    \toas 
0.
\end{talign}
The almost sure boundedness, $(W_i-\muhat_i)^2\in [0,R]$ for all $i$, further implies that 
\begin{talign}
&\frac{1}{d_n}\sum_{i=1}^n \lam_{i,a_n}(W_i-\muhat_i)^2 \\
    &=
\frac{1}{d_n}\sum_{i=1}^{n\wedge\Nmin}\lam_{i,a_n} (W_i-\muhat_i)^2
    +
\frac{1}{{n}}\sum_{i=\Nmin+1}^n[(\frac{1}{\mhat_{4,i}}-\frac{1}{m_4})(W_i-\muhat_i)^2
    +
\frac{(W_i-\muhat_i)^2}{m_4}]
    \toas
\frac{\sig^2}{m_4}.
\end{talign} 
These results, together with the $\Psi_E$ estimates~\cref{eq:psiEbound} and the assumption $\frac{\log(a_n)}{n}\to 0$, ensure 
\begin{talign}
|\sigupn^2-\sig^2|
    \leas
|\frac{\frac{1}{d_n}\sum_{i=1}^n \lam_{i,a_n}(W_i-\muhat_i)^2}{\frac{1}{d_n}\sum_{i=1}^n\lam_{i,a_n}}
    -
\sig^2|
    +
\frac{\frac{1}{d_n}R^2\log(2/a_n)+R^4\frac{1}{d_n}\sum_{i=1}^n\lam_{i,a_n}^2}
    {\frac{1}{d_n}\sum_{i=1}^n\lam_{i,a_n}}
 \toas 0.  
\end{talign}
Identical reasoning with $(\Nmin',a_n')$ substituted for $(\Nmin, a_n)$ ensures that  $\siglown\toas \sig$.

\textbf{Case 2: $m_4 = 0$.} \ 
Since $(W_1-\mu)^2 \eqas 0$, we have 
$\hat\sigma^2_n - \sig^2 \eqas \frac{1}{n}(\frac{R^2}{4}-\sig^2)\to 0$,
\begin{talign}
|(W_n-\muhat_n)^2 - \sig^2|
    \eqas  
|(\mu-\muhat_n)^2 + 2(W_n-\mu)(\mu-\muhat_n)|         \leas 
(\mu-\muhat_n)^2 + 2R|\mu-\muhat_j|
    \toas
0,
\end{talign}
and therefore $((W_n-\muhat_n)^2-\sighat_n^2)^2 \toas 0$. 
Moreover, since $\mhat_{4,i}^2\leas R^4$ for each $i$, we have 
\begin{talign}
\sum_{i=1}^n\lam_{i,a_n}
    \geas
\sum_{i=1}^n
    (\sqrt{{16\log(2/a_n)/}{(R^4n)}} \,\wedge \half)
    = 
\sqrt{16n\log(2/a_n)/R^4}\wedge \frac{n}{2}
    \to 
\infty.
\end{talign} 
Therefore, our assumption $\frac{\log(a_n)}{n}\to 0$, the $\Psi_E$ estimates \cref{eq:psiEbound}, and the Silverman-Toeplitz theorem \citep[Thm.~2]{hardy1956divergent} imply that
\begin{talign}
|\sigupn^2-\sig^2|
    \leq
|\frac{\sum_{i=1}^n \lam_{i,a_n}(W_i-\muhat_i)^2}{\sum_{i=1}^n\lam_{i,a_n}}
    -\sig^2|
    +
\frac{R^2\log(2/a_n)+\sum_{i=1}^n\lam_{i,a_n}{((W_i-\muhat_i)^2-\hat\sigma_i^2)^2}{/(2R^2)}}
    {\sum_{i=1}^n\lam_{i,a_n}}
 \toas 0.  
\end{talign}
Parallel reasoning implies that $\siglown^2\toas\sig^2$ as
\begin{talign}
\sum_{i=1}^n\lam_{i,a_n}'
    \ge
\sum_{i=\Nmin'}^n
    (\sqrt{\frac{16\log(2/a_n')}{R^4n}} \,\wedge \half)
    = 
\sqrt{\frac{16(n-\Nmin')_+^2\log(2/a_n')}{R^4n}}\wedge \frac{n-\Nmin'}{2}
    \to 
\infty.
\end{talign} 

Since $\sigupn$ and $\siglown$ both converge almost surely to $\sigma$, any
\begin{align}
\sigmidone \in \argmax_{\tilde\sigma\in[\siglown,\sigupn]} \oneq(R,\delta-a_n,\tilde\sigma)
\qtext{and}
\sigmid \in \argmax_{\tilde\sigma\in[\siglown,\sigupn]} \twoq(R,\delta-a_n,\tilde\sigma).
\end{align}
must converge almost surely to $\sigma$ as well. 
Finally, since $a_n\to 0$, 
\begin{talign}
\empq &= \twoq(R,\delta-a_n,\sigmid) \toas -\sigma\Phi^{-1}(\frac{\delta}{2})    
    \qtext{and} 
\oneqhat(R,\delta,a_n) 
\toas -\sigma\Phi^{-1}(\delta).
\end{talign}

\end{proof}
\begin{rem}[Default settings]\label{default-efficient-unknown}
As convenient default settings, one can choose $a_n = \frac{\delta}{\sqrt{n}}\Phi^{-1}(1-\delta)$ for a one-sided quantile bound and $a_n = \frac{\delta}{\sqrt{n}}\Phi^{-1}(1-\frac{\delta}{2})$ for a two-sided quantile bound and take  $(\oneq,\twoq)$ to be the efficient quantile bounds $(q_n,q^d_n)$ of \cref{cpt-quantile} with the default auxiliary bounds of \cref{aux_q_bounds}.
\end{rem}
\section{Numerical Evaluation}\label{sec:numerical}
We now turn to a numerical evaluation of our efficient bounds.  Python code implementing our bounds and reproducing all plots can be found at 
\begin{talign}
\textup{\href{https://github.com/lmackey/gauss\_conc/}{https://github.com/lmackey/gauss\_conc/}.}
\end{talign}

\subsection{Efficient quantile bounds}\label{sec:numerical2}
\begin{figure}[htb!]
\includegraphics[width=\textwidth]{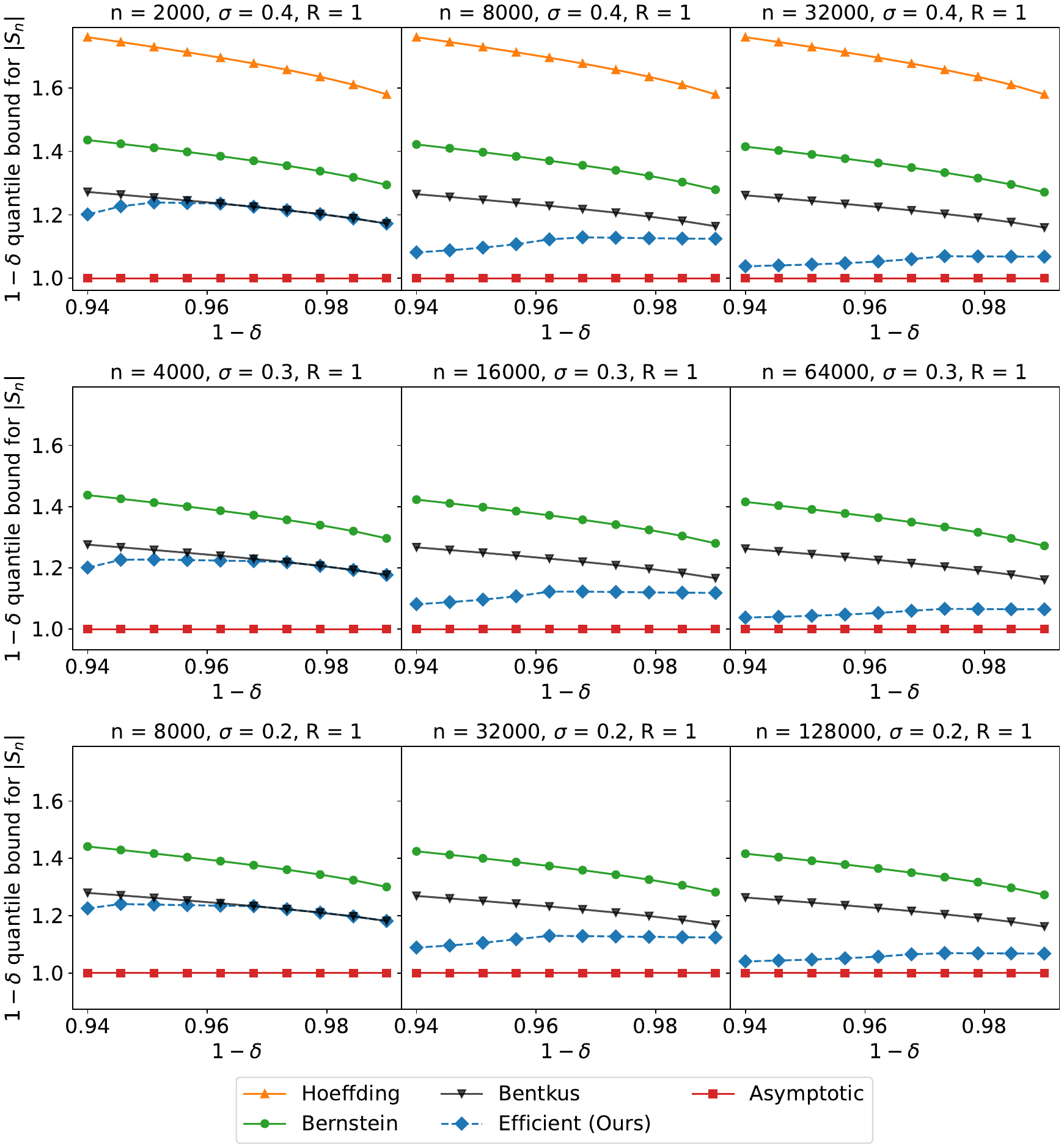}
\caption{\textbf{Quantile bounds for $\mbi{|S_n|}$} relative to the asymptotic width $\sigma\Phi^{-1}(1-\frac{\delta}{2})$ with fixed boundedness parameter $R=1$ and varying sample size $n$ and  variance $\sigma^2$. 
Unlike the inefficient Hoeffding, Bernstein, and Bentkus bounds, 
the efficient quantile bound of \cref{cpt-quantile,aux_q_bounds}  converges to the optimal asymptotic width as $n$ increases. 
\label{fig:quantile}}
\end{figure}
\begin{figure}[htb!]
\includegraphics[width=\textwidth]{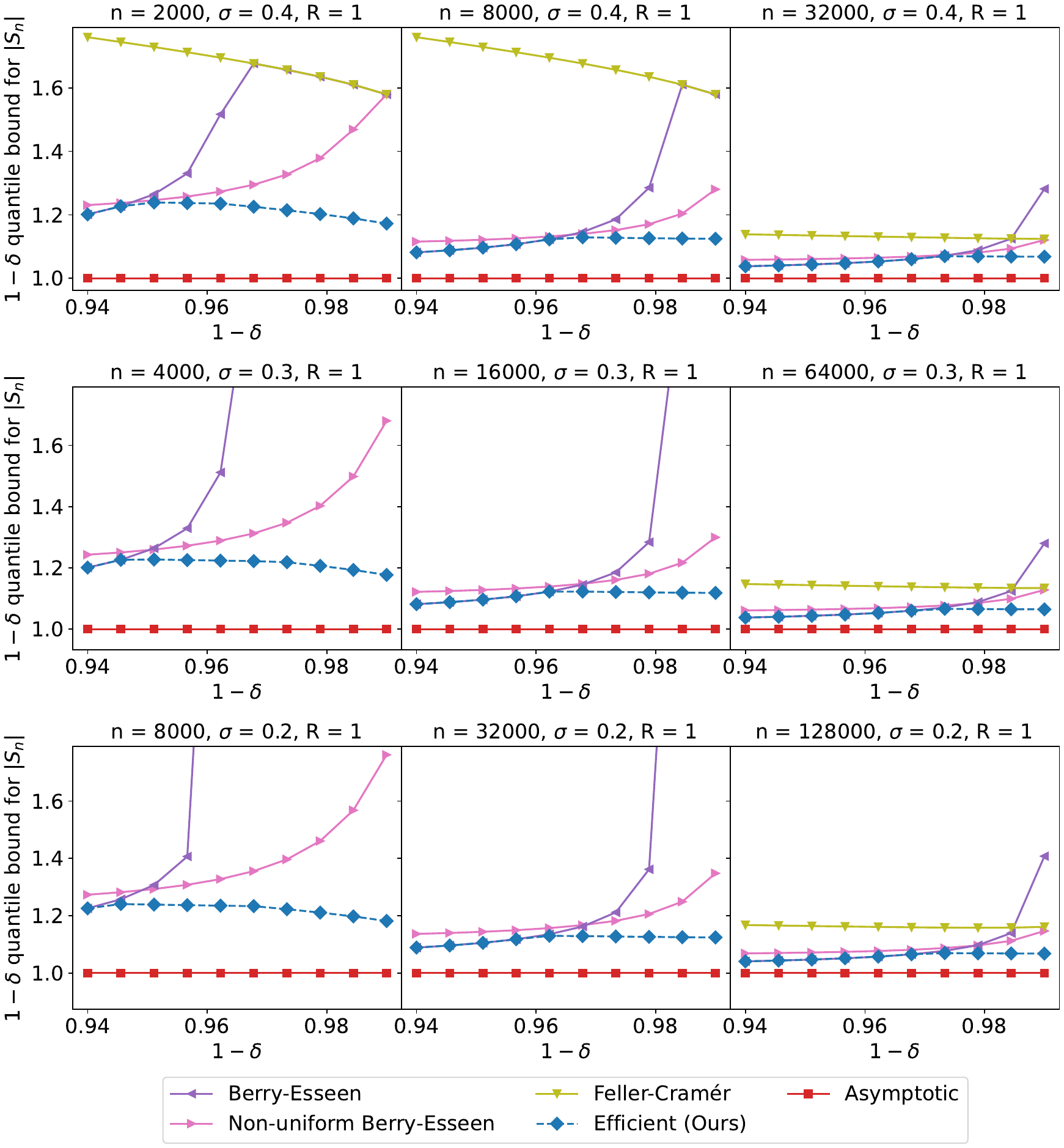}
\caption{\textbf{Efficient quantile bounds for $\mbi{|S_n|}$} relative to the asymptotic width $\sigma\Phi^{-1}(1-\frac{\delta}{2})$ with fixed boundedness parameter $R=1$ and varying sample size $n$ and  variance $\sigma^2$. 
The efficient quantile bound of \cref{cpt-quantile,aux_q_bounds} provides tighter estimates than the Berry-Esseen, non-uniform Berry-Esseen, and Feller-\Cramer bounds for higher confidence levels $1-\delta$.
\label{fig:quantile-efficient}}
\end{figure}

\cref{fig:quantile} compares the efficient two-sided quantile bound of \cref{cpt-quantile,aux_q_bounds} with the Bernstein quantile bound 
$P(|S_n| > \frac{\Rsig}{3\sqrt{n}}\log(2/\delta)+\sigma \sqrt{ 2 \log(2/\delta )})\le \delta$ \citep[Thm.~2.10]{boucheron2013concentration}
and the two-sided quantile bounds obtained by inverting the Hoeffding \cref{eq:bern_tails} and
Bentkus \cref{eq:bentkus_tails} tail bounds. 
As anticipated, the efficient bound converges to the optimal asymptotic width as $n$ increases, while the inefficient Bernstein, Hoeffding, and Bentkus bounds remain bounded away from the optimum for all $n$. 

\cref{fig:quantile-efficient} compares the efficient quantile bound of \cref{cpt-quantile,aux_q_bounds} to the two-sided quantile bounds obtained by inverting the 
Feller-\Cramer \cref{eq:feller-cramer}, 
Berry-Esseen (BE) \cref{eq:berryesseen}, and non-uniform BE \cref{eq:nonuniformberryesseen} tail bounds with 
$C_{R,\sig} = \min(.3328  ( \frac{\Rsig}{\sig} + .429 ),.33554 ( \frac{\Rsig}{\sig} + .415 ))$ \citep{shevtsova2011absolute}
and
$\tilde C_{R,\sig} = \min(17.36 \frac{\Rsig}{\sig} , 15.70\frac{\Rsig}{\sig} + 0.646) $ \citep[p.~54]{shevtsova2017absolute}.\footnote{The BE, non-uniform BE, and Feller-\Cramer bounds can be infinite, but we constrain them to be no larger than the always-valid Hoeffding bound.} 
The new efficient bound provides tighter estimates for higher confidence levels $1-\delta$, due to the improved underlying tail decay. %

\subsection{Application: Efficient empirical Berry-Esseen bounds}
\begin{figure}[htb!]
\includegraphics[width=\textwidth]{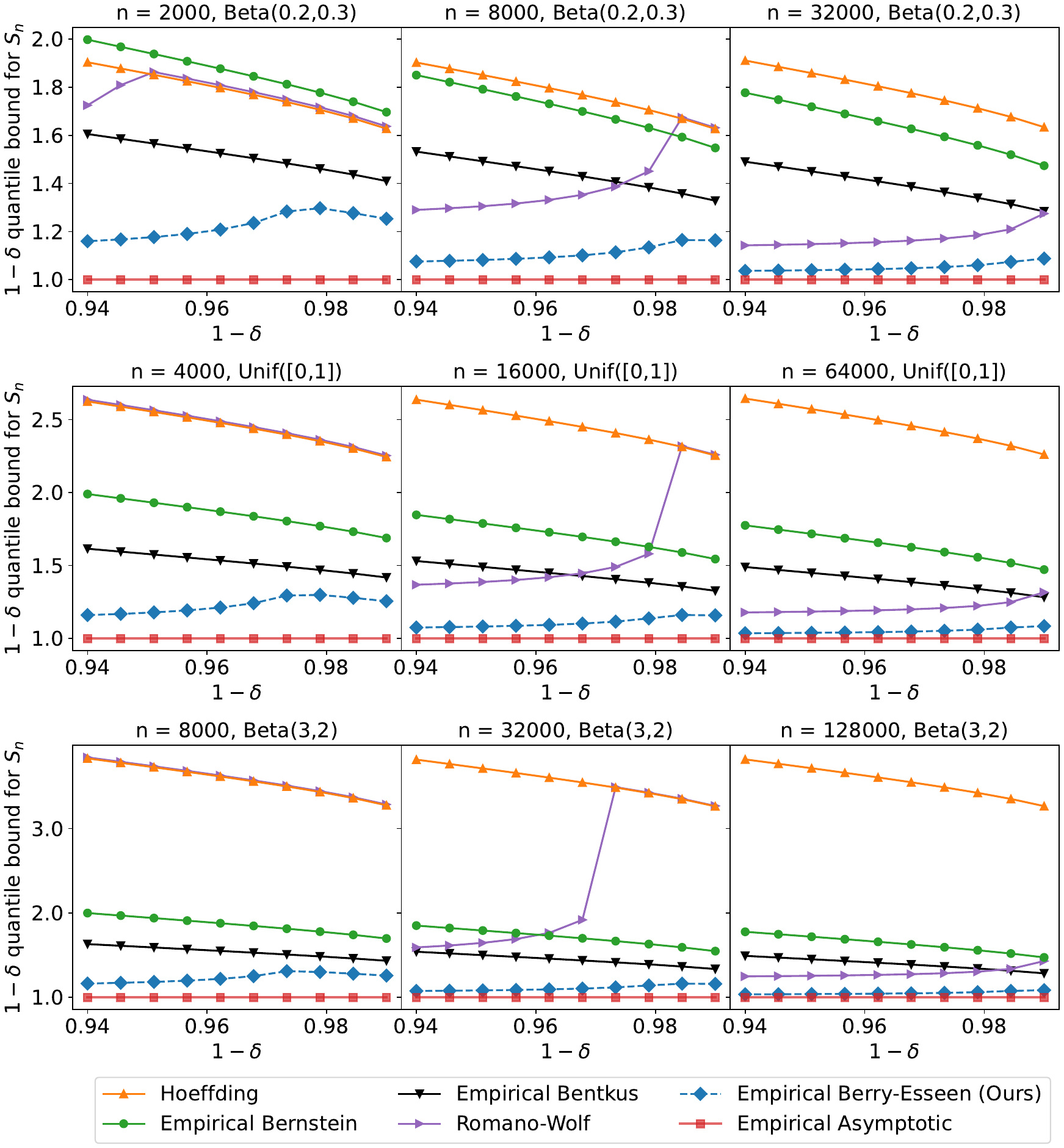}
\caption{\tbf{Empirical quantile bounds for $\mbi{S_n}$} relative to $\hat{\sigma}\Phi^{-1}(1-\delta)$ with fixed boundedness parameter $R=1$ and varying sample size $n$ and empirical variance $\hat{\sigma}^2$. 
The efficient empirical Berry-Esseen (EBE) bound of \cref{efficient-unknown,default-efficient-unknown} converges to the optimal asymptotic width as $n$ increases, while the empirical Bernstein, empirical Bentkus, and Hoeffding bounds remain bounded away from the optimum for all $n$. In addition, the efficient EBE bound provides tighter estimates than the efficient $I_{n,3}$ interval of \citet{romano2000finite}.\label{fig:emp_quantile}}
\end{figure}
\cref{fig:emp_quantile} compares the efficient empirical Berry-Esseen (EBE) bound of \cref{efficient-unknown,default-efficient-unknown} with the Hoeffding quantile bound,
$\P(S_n > R\sqrt{\log(1/\delta)/2}) \leq \delta$; 
the 
empirical Bernstein quantile bound \citep[Thm.~4]{maurer2009empirical} commonly deployed in reinforcement learning \citep{mnih2008empirical,audibert2009exploration}, 
\begin{talign}
\P\Big(S_n > \hat{\sigma} \sqrt{\log(2/\delta) \frac{n}{n-1}} + \frac{7}{3} R\log(2/\delta) \frac{\sqrt{n}}{n-1}\Big) \leq \delta
    \qtext{for}
\hat\sigma^2 
    \defeq 
\frac{1}{n}\sum_{i\le n}(W_i-\wbar)^2;
\end{talign}
an empirical Bentkus quantile bound based on \citet[(33)]{kuchibhotla2021near};
and the efficient $I_{n,3}$ quantile bound of \citet{romano2000finite},\footnote{The Romano-Wolf bound can be infinite, but we constrain it to be no larger than the Hoeffding $1-(\delta-\beta_n)$ quantile bound.} 
with its free parameter $\beta_n$ set to match our default value, $a_n = \frac{\delta}{\sqrt{n}}\Phi^{-1}(1-\delta)$.

As advertised, the efficient EBE bound converges to the optimal asymptotic width as $n$ increases, while the empirical Bernstein, empirical Bentkus, and Hoeffding bounds remain bounded away from the optimum for all $n$. 
Moreover, the efficient EBE bound provides a consistently tighter estimate than the Romano-Wolf bound.
\begin{figure}[htb!]
\includegraphics[width=\textwidth]{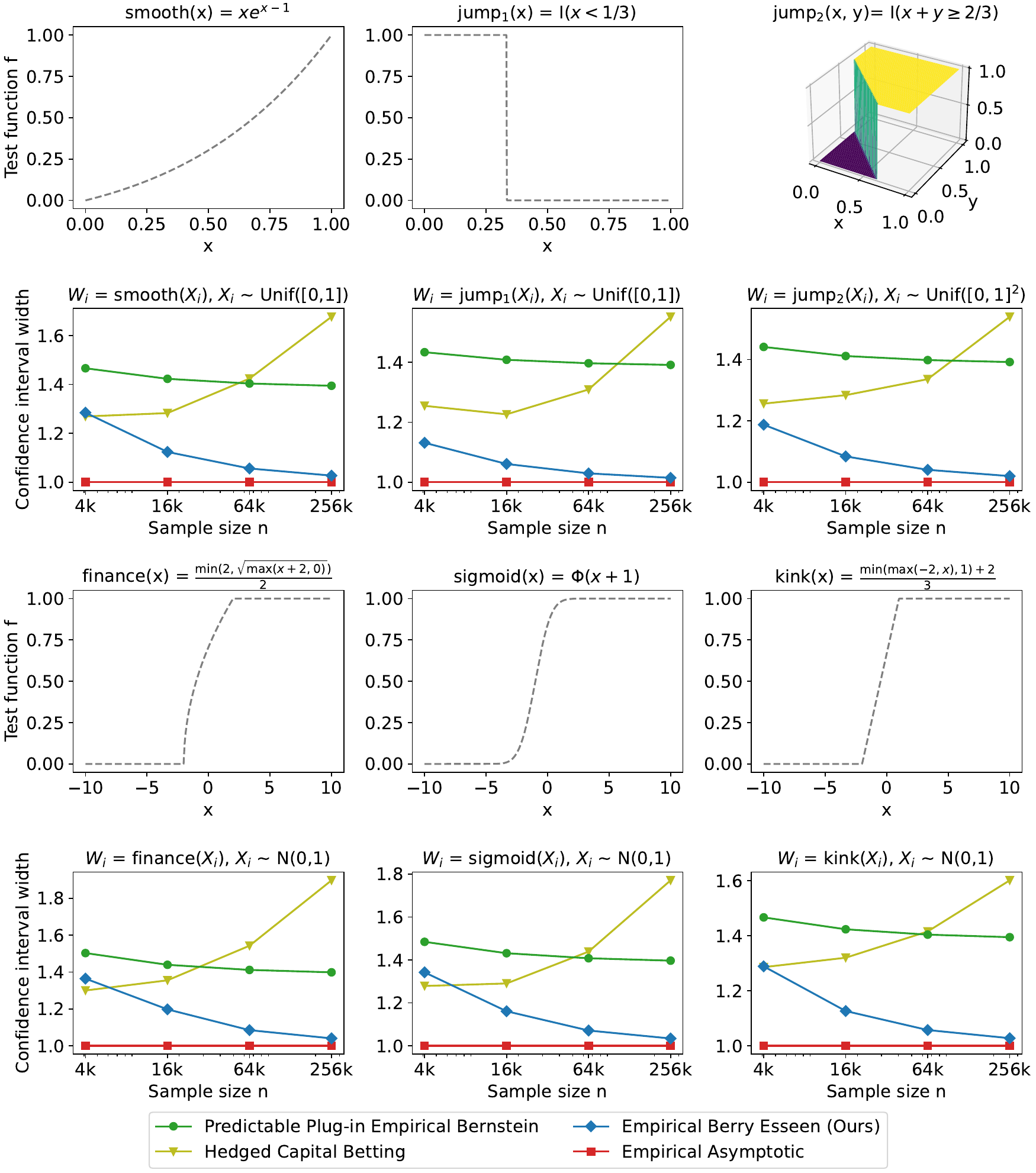}
\caption{\tbf{Widths of Monte Carlo confidence intervals for $\mbi{\E(f(X_1))}$} relative to the empirical asymptotic width ${\hat{\sigma}\Phi^{-1}(1-\frac{\delta}{2})}\frac{2}{\sqrt{n}}$ with  $\hat{\sigma}^2 = \frac{1}{n}\sum_{i=1}^n (W_i - \bar{W}_n)^2$ and confidence level $1-\delta=0.95$. 
We display relative widths averaged across $10$ independent replicates of the experiment. 
The efficient empirical Berry-Esseen intervals of \cref{efficient-unknown,default-efficient-unknown} converge to the optimal asymptotic size as $n$ increases, while the predictable plug-in empirical Bernstein and hedged capital betting bounds remain bounded away from the optimum for all $n$.\label{fig:ci}}
\end{figure}
\subsection{Application: Monte Carlo Confidence Intervals for Numerical Integration}\label{sec:mc}
A common use of concentration inequalities is in numerical integration, to provide a confidence interval for the expectation $\E(f(X_1))$ of a function $f$ using an \iid sample $(f(X_i))_{i=1}^n$. 
Recently, \citet{jain2025empirical} employed the state-of-the-art predictable plug-in empirical Bernstein (PrPl-EB) and hedged capital betting (HCB) intervals of \citet{waudby2024estimating} for this purpose, as they provide some of the narrowest confidence intervals known for bounded observations. 
However, the PrPl-EB intervals are also provably inefficient, as they asymptotically match the suboptimal width of Bernstein's inequality rather than the optimal width of the CLT~\citep[Sec.~E.3]{waudby2024estimating}.  
\cref{fig:ci} compares the PrPl-EB and HCB intervals with the efficient empirical Berry-Esseen (EBE) interval of \cref{efficient-unknown,default-efficient-unknown} 
for each of six benchmark test functions studied by \citet{jain2025empirical}.
In each case, we find that the PrPl-EB and HCB intervals remain bounded away from the optimal asymptotic width, while the EBE interval converges to the optimum, resulting in the tightest bounds for larger sample sizes.

\section{Discussion and Related Work}\label{sec:discussion}
In this work, we have derived new computable tail and quantile bounds for the scaled deviations $S_n = \sqrt{n}(\wbar - \E(W_1))$ with asymptotically optimal size, finite-sample validity, and sub-Gaussian decay. 
These bounds enable the construction of efficient confidence intervals with correct coverage for any sample size.
Our concentration inequalities arise from new computable bounds on a non-uniform Kolmogorov distance  and the $p$-Wasserstein distances to a Gaussian.

The notion of efficient confidence intervals for the mean was introduced by \citet{romano2000finite}. In their Thm.~2.1, \citeauthor{romano2000finite}  showed that efficient---that is, asymptotically minimal-length when scaled by $\sqrt{n}$---confidence intervals must converge to the width of the asymptotic Gaussian intervals implied by the CLT \cref{eq:gaussian_tails}.
Moreover, Sec.~3 of \citeauthor{romano2000finite} surveys a number of procedures for constructing confidence intervals that are either finite-sample invalid (including the bootstrap \citep{efron1979bootstrap} and methods based on Edgeworth expansions \citep{hall2013bootstrap}) or inefficient (including the methods of \citet{anderson1969confidence} and \citet{gasko1991new}).
\citeauthor{romano2000finite} conclude by developing an efficient valid confidence interval for the mean of variables supported on $[0,1]$ but report that it is ``unfortunately, much too wide for a reasonable sample.''
Our new efficient bounds are developed in an entirely different manner, and, as we  demonstrate in \cref{sec:numerical}, improve upon the Romano-Wolf interval and the most commonly used empirical concentration inequalities. 

Our zero-bias coupling arguments generalize the uniform, $u$-independent bounds of \citet{chen2011normal} and \citet{ross2011fundamentals} to derive tighter non-uniform bounds with sub-Gaussian decay.
Our Wasserstein-bounding arguments build on the pioneering work of \citet{bonis2020stein} who derived Wasserstein convergence rates for the CLT with inexplicit constants. Our new arguments lead to tighter estimates of the distance to Gaussianity and explicit, practical constants.

Our results also suggest simple strategies for making any concentration inequality or confidence region efficient. For efficient concentration, one can simply take the minimum of any existing tail bound and  \cref{efficient-zero-tail} or \cref{cpt_concentration} to simultaneously reap the small-sample benefits of the former and the large-sample benefits of the latter.
For efficient confidence, one can divide the total confidence budget between an existing region and the efficient region of \cref{efficient-unknown} and then intersect the two regions.
The result will remain efficient if the budget allocated to the auxiliary region vanishes as $n$ grows.
These strategies are particularly relevant given the recent renewed interest in deriving tighter concentration inequalities for bounded random variables \citep[see, e.g.,][]{jun2019parameter,waudby2024estimating,orabona2023tight}.

In \cref{sec:mc}, we demonstrated the usefulness of our bounds by constructing tighter confidence intervals for Monte Carlo integration. A second natural application is to the construction of \emph{risk-controlling prediction sets}, that is, sets of predicted outcomes that are guaranteed to have high expected utility with high probability according to a given quality measure \citep{bates2021distribution}. \citet{bates2021distribution} reduce the problem of risk control to constructing confidence intervals for the unknown expected utility and employ the concentration inequalities of \citet{hoeffding1963probability}, \citet{bentkus2004hoeffding}, \citet{maurer2009empirical}, and \citet{waudby2024estimating} to form their prediction sets. A tighter confidence interval based on efficient concentration would yield more informative and hence more actionable prediction sets.

Finally, while our work has focused on concentration of the sample mean assuming boundedness, non-zero variance, and a sampling distribution independent of the sample size, we conjecture that our analyses can be adapted to 
(1) derive efficient, computable concentration inequalities for more general classes of asymptotically normal statistics or for self-normalized statistics, in the spirit of \citet{jing2003self-normalized} and \citet{pinelis2007exact}; 
(2) take advantage of other favorable properties of the $W_1$ distribution like symmetry or a vanishing third moment; 
and (3) account for the non-Gaussian limits that arise when the distribution underlying $(W_i)_{i=1}^n$ is allowed to vary with the sample size $n$.
We leave these important challenges for future work.

\section*{Acknowledgments}  
We thank Diego Martinez-Taboada and Aaditya Ramdas for providing their empirical Bernstein variance bound code and Aadit Jain, Fred J. Hickernell, Art B. Owen, and Aleksei G. Sorokin for sharing their Monte Carlo confidence interval code.
\appendix
\numberwithin{lemma}{section}
\numberwithin{proposition}{section}
\numberwithin{theorem}{section}
\numberwithin{figure}{section}
\numberwithin{table}{section}
\numberwithin{corollary}{section}
\numberwithin{assumption}{section}

\section*{Table of Contents}
\etocsettocstyle{}{}
    \etocdepthtag.toc{mtappendix}
    \etocsettagdepth{mtchapter}{none}
    \etocsettagdepth{mtappendix}{section}
    \etocsettagdepth{mtappendix}{subsection}
    {\tableofcontents}
\begin{appendix}
\section{\pcref{zero-tail}}\label{proof-zero-tail}
Fix any $u\in\reals$.
Our proof structure, based on Stein's method, mimics that of \citet[Thm.~3.27]{ross2011fundamentals} but employs $u$-dependent bounds in place of the $u$-independent bounds invoked by \citeauthor{ross2011fundamentals}.

\paragraph*{Solving the Stein equation}
First we define the \emph{Stein equation} \citep{stein1986approximate}
\begin{talign}
f_u'(w) - w f_u(w) = \indic{w \leq u} - \Phi(u).
\end{talign}
By \citet[Lem.~2.2]{chen2011normal}, the absolutely continuous function
\begin{talign}\label{indic-stein-solution}
f_u(w) = \sqrt{2\pi} \exp(\frac{w^2}{2}) 
\Phi(w \wedge u) \Phi^c(w \vee u)
\end{talign}
solves the Stein equation, and we can therefore write
\begin{talign}
\P(\Sz \leq u) - \Phi(u) 
    &= 
\E[f_u'(\Sz) - \Sz f_u(\Sz)]
    =\E[Sf_u(S) - \Sz f_u(\Sz)]\ ,
\end{talign}
where the final equality uses the definition of the zero-biased distribution.
Negating both sides, we obtain
\begin{talign}
\P(\Sz > u) - \Phi^c(u) 
    &= 
\E[\Sz f_u(\Sz) - Sf_u(S)].
\end{talign}

\paragraph*{Bounding the Stein solution}
Now let $g_u(w) = w f_u(w)$.
By \citet[Eq.~(2.81)]{chen2011normal}, the function 
\begin{talign}
\label{hu-def-full}
h_u(w)
    &\defeq 
\begin{cases}
\sqrt{2\pi} \Phi^c(u) ( (1+w^2) \exp(w^2/2) \Phi(w) + w / \sqrt{2\pi} )
    &
\text{if } w \leq u \\
\sqrt{2\pi} \Phi(u) ( (1+w^2) \exp(w^2/2) \Phi^c(w) - w / \sqrt{2\pi} )
    & 
\text{if } w > u
\end{cases} \\
    &=
w (\Phi^c(u) - \indic{w > u}) + (1+w^2) f_u(w)
\\
    &=
w(\indic{w \leq u} \Phi^c(u) 
- \indic{w > u} \Phi(u))
+ (1+w^2) f_u(w)
\end{talign}
matches the derivative of $g_u(w)$ whenever $w\neq u$.
Since the absolute continuity of $f_u$ implies that $g_u$ is absolutely continuous on compact intervals, the fundamental theorem of calculus yields 
\begin{talign}
\E[g_u(\Sz) - g_u(S)] 
    = 
\E[\int_{0}^1 h_u(\Sz+x(S-\Sz)) (\Sz-S)dx]
    =
\E[h_u(\Smid)(\Sz-S)].
\end{talign} 

Our next result, proved in \cref{proof-hu-growth} provides a suitable $u$-dependent bound on $h_u$.

\begin{lemma}[Growth of $h_u$]\label{hu-growth}
For any $u \geq 0, \lam \in [0,1],$ and $w\in\reals$,
\begin{talign}
0 
    &\leq 
h_u(w) 
    \leq 
h_u(\lam u)
    +
(h_u(u) - h_u(\lam u)) \indic{w > \lam u}
	-
u \indic{w > u}.
\end{talign}
for $h_u$ defined in \cref{hu-def-full}.
\end{lemma}

\paragraph*{$\Sz$ tail bound}
Fix any $\lam \in [0,1]$. 
\cref{hu-growth}, our almost sure assumption $\Sz - S \leq \delta$, and the definition of $\Smid$ together imply that
\begin{talign}
&\P(\Sz > u) - \Phi^c(u) 
    \leq
\delta\, \E[h_u(\Smid)] \\
    &\leq
\delta\, 
[h_u(\lam u)
    + 
(h_u(u) - h_u(\lam u)) \P(\Smid > \lam u)
	-
 u\, \P(\Smid > u)].
\label{Sz-tail-bound}
\end{talign}

\paragraph*{From $S$ tails to $\Sz$ tails}
When $S - \Sz \leq \delta$ almost surely, we additionally have
\begin{talign}\label{S-tails-to-Sz-tails}
S - \Smid
    &\leq 
(S - \Sz) (1-U)
    \leq
\delta
    \qtext{and therefore} \\
\P(S > u + \delta) 
    &= 
\P(\Sz\wedge\Smid > u + \delta + \Sz\wedge\Smid-S) \\
    &\leq 
\P(\Sz\wedge\Smid > u)
    \leq
\P(\Sz > u)\wedge  \P(\Smid > u).
\end{talign}
Combining this inequality with our $\Sz$ tail bound \cref{Sz-tail-bound} yields
\begin{talign}&
\P(S > u + \delta)
    \leq
\Phi^c(u)
    +
\delta\, 
[h_u(\lam u)
    + 
(h_u(u) - h_u(\lam u)) \P(\Smid > \lam u)
	-
 u\, \P(S > u + \delta)].
\end{talign}
Rearranging the terms of this expression yields the final advertised result.

\subsection{Proof of \cref{hu-growth}: Growth of $h_u$}\label{proof-hu-growth}
Our proof relies on the following $\Phi^c$ property.

\begin{lemma}[Growth of $\Phi^c$]\label{phic-growth}
The following function is decreasing for $w \geq 0$:
\begin{talign}
b(w) 
    \defeq
\sqrt{2\pi} (1+w^2) \exp(w^2/2) \Phi^c(w) - w.
\end{talign}
\end{lemma}
\begin{proof}
Fix any $w\geq 0$. 
The derivative of $b$ takes the form
\begin{talign}
b'(w)
    =
\Phi^c(w)(3w+w^3)/\varphi(w) - (2+w^2).
\end{talign}
Since the Mills ratio $\frac{\Phi^c(w)}{\varphi(w)} < \frac{2+w^2}{3w+w^3}$ \citep[Thm.~2.1]{lee1992laplace}, $b'(w)$ is negative and $b$ is decreasing. 
\end{proof}

Fix any $u\geq 0, \lam \in [0,1],$ and $w \in \reals$. 
We divide our proof into cases based on $w$.

\paragraph*{Lower bound, $w\neq u$:}
Since $g_u$ is differentiable for $w\neq u$ and increasing by \citet[Lem.~2.3]{chen2011normal}, $h_u(w) = g_u'(w) \geq 0$.

\paragraph*{Lower bound, $w = u$:}
We have $h_u(u) = \lim_{v\uparrow u} h_u(v) \geq 0$.

\paragraph*{Upper bound, $w > u$:}
Since $b$ is decreasing (\cref{phic-growth}), the definition of $f_u$ \cref{indic-stein-solution} implies that
\begin{talign}
(1+w^2) f_u(w) \indic{w > u}
    &=
\sqrt{2\pi} (1+w^2) \exp(w^2/2) \Phi^c(w) \Phi(u) \indic{w > u} \\
    &\leq
(b(u) + w) \Phi(u) \indic{w > u}.
\end{talign}
Hence, 
\begin{talign}
h_u(w) \indic{w > u}
    &=
(1+w^2) f_u(w) \indic{w > u} - w\Phi(u) \indic{w > u}
    \leq
b(u) \Phi(u) \indic{w > u}.
\end{talign}

\paragraph*{Upper bound, $\lam u \geq w \geq 0$:}
Since $g_u$ is increasing \citep[Lem.~2.3]{chen2011normal}, $f_u$ is increasing for $w < u$, and $\lam u f_u(\lam u) + \Phi^c(u)$ is nonnegative, we have
\begin{talign}
h_u(w) \indic{\lam u \geq w \geq 0}
    &=
(f_u(w) + w (w f_u(w) + \Phi^c(u) ) ) \indic{\lam u \geq w \geq 0} \\
    &\leq 
[f_u(\lam u) + w (\lam u f_u(\lam u) + \Phi^c(u))] \indic{\lam u \geq w \geq 0}\\
    &\leq 
[f_u(\lam u) + \lam u (\lam u f_u(\lam u) + \Phi^c(u))] \indic{\lam u \geq w \geq 0}\\
    &= 
h_u(\lam u) \indic{\lam u \geq w \geq 0}
    =
h_u(\lam u) (\indic{w \geq 0} - \indic{w > \lam u}).
\end{talign}

\paragraph*{Upper bound, $w < 0$:}
Since $w < 0 \leq u$, we use the definition of $f_u$ \cref{indic-stein-solution}, the fact that $b$ is decreasing (\cref{phic-growth}), and the nonnegativity of $\Phi^c$ to derive
\begin{talign}
(1+w^2) f_u(w) \indic{w < 0}
    &=
\sqrt{2\pi} \exp(w^2/2) \Phi^c(|w|) \Phi^c(u) \indic{w < 0} \\
    &=
(b(|w|) + |w|) \Phi^c(u) \indic{w < 0} \\
    &\leq
(b(0) + |w|) \Phi^c(u) \indic{w < 0}.
\end{talign}
Since $\lam u \geq 0$ by assumption, our prior derivation implies $h_u(\lam u) \geq h_u(0)$ and hence,
\begin{talign}
h_u(w) \indic{w < 0} 
    &= 
( (b(0) + |w|) - |w| ) \Phi^c(u) \indic{w < 0} \\
    &=
b(0) \Phi^c(u) \indic{w < 0} 
    =
\sqrt{\frac{\pi}{2}} \Phi^c(u) \indic{w < 0} \\
    &=
h_u(0) \indic{w < 0}
    \leq
h_u(\lam u) \indic{w < 0}.
\end{talign}

\paragraph*{Upper bound, $u \geq w > \lam u$:}
Since $u,w, f_u(u),$ and $\Phi^c(u)$ are nonnegative,  $g_u$ is increasing \citep[Lem.~2.3]{chen2011normal}, 
and $u \geq w$, we have 
\begin{talign}
h_u(w) \indic{u \geq w > \lam u} 
    &=
(f_u(w) + w (w f_u(w) + \Phi^c(u) ) ) \indic{u \geq w > \lam u} \\
    &\leq 
(f_u(u) + w (u f_u(u) + \Phi^c(u) ) ) \indic{u \geq w > \lam u} \\
    &\leq
(f_u(u) + u (u f_u(u) + \Phi^c(u) ) ) \indic{u \geq w > \lam u} \\
    &= 
h_u(u) \indic{u \geq w > \lam u} 
    =
h_u(u) (\indic{w > \lam u}  - \indic{w > u}).
\end{talign}

\paragraph*{Complete upper bound:}
Taken together, our upper bounds yield
\begin{talign}
h_u(w)
    &\leq 
b(u) \Phi(u) \indic{w > u}
	+ 
h_u(u) (\indic{w > \lam u}  - \indic{w > u})
	\\&\qquad+ 
h_u(\lam u) (\indic{w \geq 0} - \indic{w > \lam u})
	+ 
h_u(\lam u) \indic{w < 0} \\
	&\leq 
(b(u) \Phi(u) - h_u(u)) \indic{w > u}
	+ 
(h_u(u) - h_u(\lam u)) \indic{w > \lam u}
	+ 
h_u(\lam u) \\
	&= 
(h_u(u) - h_u(\lam u)) \indic{w > \lam u}
	+ 
h_u(\lam u)
    -
u\indic{w > u}.
\end{talign}

\section{\pcref{lemm:dino_cpt}}
\label{proof-lemm:dino_cpt}
We begin by defining some convenient shorthand notation. 
For each $p\geq 1$ and $q\in[0,1]$, we define $\norm{\Bin(n,q)}_p\defeq\norm{V}_p$ for $V\sim\Bin(n,q)$ and 
make use of the constants
\begin{talign}
\widetilde \Rsig\defeq \Rsig/\sig = \frac{1}{2}\tilde R+\frac{1}{2}\sqrt{\tilde R^2-4},
\end{talign}
\begin{talign}
A_p\defeq \frac{\sqrt{e}\sqrt{p+2}(2e)^{1/p}}{\sqrt{2}},\qquad  A^*_{n,p}\defeq \frac{({p+2})n^{1/p}}{2\sqrt{n}},\qquad \tilde A_{n,p}\defeq A^*_{n,p}\tilde R^{-2/p},
\end{talign}
\begin{talign}
U_{n,p}\defeq A_p+\tilde R \tilde A_{n,p},\qquad   \tilde U_{n,p}\defeq\sqrt{2} A_p+2^{1/p}\tilde R \tilde A_{n,p}, \qtext{and}
\end{talign}
\begin{talign}\label{eq:Cnp}
   C_{n,p}\defeq 
\min\begin{cases}\tilde A_{n,p}\tilde R 2^{1/p}+\sqrt{2}A_{p}\\{\sqrt{2}}\Big(\frac{2\Gamma(\frac{p+1}{2})}{\sqrt{\pi}}\Big)^{1/p}\begin{cases}\tilde R^{1-2/p}\text{\quad if }p<4\\\tilde R^{1-2/p} \wedge \frac{\tilde R}{\sqrt{n}}\sqrt{\|\Bin(n,\frac{2}{\tilde R^2})\|_{p/2}}\quad \text{if } p \ge 4.\end{cases} 
\end{cases}\!\!\!\!\!\!\!\!
\end{talign}

We will focus principally on establishing the bound $\frac{1}{\sig}\wass(S_n,\Gsn(0,\sigma^2))
\le 
\wassbd$
and derive the secondary bound 
$\omega^{R,\kappa,1}_p(\sig)\leq \omega_p^{R,\kappa,2}(\sig)$ through a series of asides demarcated by a vertical bar on the left-hand side. 
For the case of $p=1$, we invoke \citet[Cor.~4.2]{chen2011normal} and \cref{summand-deviation-bound} in turn to find that
\begin{talign}
\frac{1}{\sig}\mathcal{W}_1(S_n,\Gsn(0,\sigma^2))\le \frac{\E[|W_1-\E(W_1)|^3]}{\sig^3\sqrt{n}} \le \frac{\Rsig}{\sig \sqrt{n}} =  \wassbd.
\end{talign}
When $p> n+1$ the triangle inequality and \cref{dino_veg,mal_aux_ventre} imply that
\begin{talign}
\sigma^{-1}\mathcal{W}_p(S_n,\Gsn(0,\sigma^2))\le \|Z\|_p+\|S_n/\sigma\|_p
\le \sqrt{p-1}+\sqrt{p-1}\Rsig/\sigma.
\end{talign}

Now fix any $p \in [2,n+1]$, define $\tilde S_n\defeq S_n/\sig$, and let $Z\sim\Gsn(0,1)$ be independent of $(W_i)_{i\ge1}$.
For ease of notation, 
we will write $\wass(\tilde S_n,Z)$ in place of $\wass(\tilde S_n,\Gsn(0,1))$ and 
use $(X_i)_{i\geq 1}$ to represent the centered and rescaled random variables 
\begin{talign}
X_i\defeq\frac{W_i-\E[W_i]}{\sigma\sqrt{n}},
\end{talign} 
which satisfy $\tilde S_n = \sum_{i\le n}X_i$, $\Var(X_1)=\frac{1}{n}$, and  $\|X_1\|_{\infty}\le {\widetilde\Rsig}{/\sqrt{n}}$ by \cref{summand-deviation-bound}. 
Hence, for all $k\ge 0$, the moments of $X_1^k$ can be upper bounded as
\begin{talign}\label{moment_eq_t}&\|X_1^k\|_p=\big(\E[|X_1|^{kp}]\big)^{1/p}\le \big(\|X_1\|_{\infty}^{kp-2}\E[X_1^2]\big)^{1/p} \le\frac{\widetilde \Rsig^k}{\widetilde \Rsig^{2/p}\sqrt{n}^k}.
\end{talign}%

Consider a random index $I\sim \Unif(\{1,\dots,n\})$ and a sequence $(X_i')_{i\geq1}\eqdist (X_i)_{i\geq1}$ with $(I,(X_i')_{i\geq1},(X_i)_{i\geq1})$ mutually independent.
Define an exchangeable copy of $\tilde S_n$,
\begin{talign}
S'_{n}\defeq \tilde S_n+(X'_I-X_I),
\end{talign} 
and the exchangeable pair difference,
\begin{talign}
Y\defeq\tilde S_n-S'_{n}.
\end{talign} 
Let $h_k(x)\defeq e^{x^2/2} \frac{\partial^k}{\partial x}e^{-x^2/2}$ designate the $k$-th Hermite polynomial and define  $H_k\defeq h_k(Z)$.
Finally, fix any $\kappa> \frac{\tilde R^2}{n}$ and define
\begin{talign}
\mnkappa\defeq {\sqrt{1-\frac{\tilde R^2}{n\kappa}}}.%
\end{talign}
A slight modification of  \citet[Thm.~3]{bonis2020stein} shows that
\begin{talign}\label{sneeze}\wass (\tilde S_n,Z)\le &\int^{-\frac{1}{2}\log(1-\frac{\tilde R^2}{n\kappa})}_0\|e^{-t}\tilde S_n-\frac{e^{-2t}}{\sqrt{1-e^{-2t}}}Z\|_p\dt \\&+\int_{-\frac{1}{2}\log(1-\frac{R^2}{n\kappa})}^{\infty}e^{-t}\|n\E[Y\mid \tilde S_n]-
\tilde S_n\|_p\dt\\&+\int_{-\frac{1}{2}\log(1-\frac{\tilde R^2}{n\kappa})}^{\infty}\frac{e^{-2t}\|H_1\|_p}{\sqrt{1-e^{-2t}}}\|\frac{n}{2}\E[Y^2\mid \tilde S_n]-1\|_p\dt
\\&+\sum_{k\ge 3}\int_{-\frac{1}{2}\log(1-\frac{\tilde R^2}{n\kappa})}^{\infty}\frac{e^{-kt}\|H_{k-1}\|_p}{k!(\sqrt{1-e^{-2t}})^{k-1}}n\|\E[Y^k\mid \tilde S_n]\|_p\dt
\\
\defeq &\ (a_0)+(a_1)+(a_2)+(a_3).
\end{talign}

\subsection*{Bounding $(a_0)$}
We first bound $(a_0).$ %
To this end, fix $\epsilon>0$ and select $G\sim \mathcal{N}(0,1)$ independent from $Z$ such that $\|\tilde S_n-G\|_p\le \mathcal{W}_p(\tilde S_n,Z)+\epsilon.$ 
By the triangle inequality, 
\begin{talign}
   (a_0)
   &\le\int^{-\frac{1}{2}\log(1-\frac{\tilde R^2}{n\kappa})}_{0}e^{-t}\|\tilde S_n-G\|_p+\|e^{-t}G-\frac{e^{-2t}}{\sqrt{1-e^{-2t}}}Z\|_p \dt
    \\&\overset{(a)}{=} (1-\mnkappa)\|\tilde S_n-G\|_p+\|Z\|_p\int^{-\frac{1}{2}\log(1-\frac{\tilde R^2}{n\kappa})}_{0}\frac{e^{-t}}{\sqrt{1-e^{-2t}}}\dt  \\&\le (1-\mnkappa)\big(\mathcal{W}_p(\tilde S_n,Z)+\epsilon\big) +\|Z\|_p\big(\frac{\pi}{2}-\sin^{-1}(\mnkappa)\big)
\end{talign} where (a) follows as $e^{-t}G-\frac{e^{-2t}}{\sqrt{1-e^{-2t}}}Z\overset{d}{=} \frac{e^{-t}}{\sqrt{1-e^{-2t}}}Z$.
Since $\epsilon>0$ was arbitrary, we have \begin{talign}&
 (a_0)
 \le (1-\mnkappa)\mathcal{W}_p(\tilde S_n,Z)+\|Z\|_p\big(\frac{\pi}{2}-\sin^{-1}(\mnkappa)\big).
\end{talign} 
In addition, if $p=2$, by independence of $Z$ and $S_n$ we obtain that 
\begin{talign}\label{eq:02}
   (a_0)
   &\le\int^{-\frac{1}{2}\log(1-\frac{\tilde R^2}{n\kappa})}_0\sqrt{e^{-2t}\|\tilde S_n\|_2^2+\frac{e^{-4t}}{{1-e^{-2t}}}\|Z\|_2^2} \dt
    \\&\le\int^{-\frac{1}{2}\log(1-\frac{\tilde R^2}{n\kappa})}_0\frac{e^{-t}}{\sqrt{1-e^{-2t}}}\dt
    \le
 \frac{\pi}{2}-\sin^{-1}(\mnkappa).
\end{talign} 
These results together with the bound \cref{sneeze} yield the inequality
\begin{talign}
\wass (\tilde S_n,Z)\le & \Big[\|Z\|_p\Big(\frac{\pi}{2}-\sin^{-1}(\sqrt{1-\frac{\tilde R^2}{n\kappa}})\Big)+(a_1)+(a_2)+(a_3)\Big]
\begin{cases}
\frac{1}{\mnkappa} & \text{if } p > 2 \\
1 & \text{if } p = 2.
\end{cases}
\end{talign} 

\subsection*{Bounding $(a_1)$}
 We will next bound $(a_1)$ of \cref{sneeze}. 
 Note that, since $I\sim\Unif(\{1,\dots,n\})$, 
\begin{talign}\label{merur2_t}
\E[Y\mid \tilde S_n]=\frac{1}{n}\sum_{i\le n}\E[X_i-X'_i\mid \tilde S_n]=\frac{1}{n}\sum_{i\le n}X_i-\E[X_i]=\frac{1}{n}\tilde S_n,
\end{talign} 
and hence $\|n\E[Y\mid \tilde S_n]-\tilde S_n\|_p=0$. 
Therefore $(a_1)=0$.

\subsection*{Bounding $(a_2)$}
We now turn to bounding $(a_2)$ of \cref{sneeze}. 
By Jensen's inequality, 
\begin{talign}\|\frac{n}{2}\E[Y^2\mid \tilde S_n]-1\|_p
&\le \|(\frac{1}{2}\sum_{i\le n}\E[X_i^2]+X_i^2)-1\|_p
=\frac{1}{2} \|\sum_{i\le n}X_i^2-1\|_p.
\end{talign} 
Moreover, by \cref{summand-deviation-bound},  $|X_i^2-1/n|\overset{a.s.}{\le}\max(\frac{1}{n}\widetilde \Rsigsqd-\frac{1}{n},\frac{1}{n}\big)$. 
First suppose $\frac{1}{n}\ge \frac{\widetilde \Rsigsqd-1}{n}$.
Since $\E[X_i^2]=1/n$, we know $\|X_i^2-1/n\|_p$  is maximized when  $\frac{n}{\widetilde \Rsigsqd} X_i^2\sim \Ber(\frac{1}{\widetilde \Rsigsqd}).$
Hence
\begin{talign}
\|X_i^2-1/n\|_p
\le  
\frac{1}{n}\Big[\frac{ \widetilde \Rsigsqd-1}{ \widetilde \Rsigsqd}\Big]^{1/p}\big( (\widetilde \Rsigsqd-1)^{p-1}+1\big)^{1/p}.
\end{talign} 
Now suppose $ \widetilde \Rsigsqd-1\geq 1$. We instead obtain that 
\begin{talign}
\|X_i^2-1/n\|_p
\le  
(\|X_i^2-1/n\|_\infty^{p-2} \|X_i^2-1/n\|_2^2)^{1/p}
\le
\frac{1}{n}\big( \widetilde \Rsigsqd-1\big)^{1-1/p}.
\end{talign} 
Therefore using the Marcinkiewicz-Zygmund inequality (\cref{dino_veg}) and the Rosenthal inequality (\cref{dino_veg2}) we find that
 \begin{talign}\|\frac{n}{2}\E[Y^2\mid \tilde S_n]-1\|_p
\overset{}{\le}\frac{1}{2\sqrt{n}}\min\begin{cases}\sqrt{p-1}(\widetilde \Rsigsqd-1)^{1-1/p}\\(\widetilde \Rsigsqd-1)^{1-1/p} A^*_{n,p}+\sqrt{\widetilde \Rsigsqd-1}A_p.
\end{cases}%
\end{talign}
for  $ \widetilde \Rsigsqd\ge 2$ and
\begin{talign}\|\frac{n}{2}\E[Y^2\mid \tilde S_n]-1\|_p
\overset{}{\le}\frac{1}{2\sqrt{n}}\min\begin{cases}\sqrt{p-1}\Big[\frac{ \widetilde \Rsigsqd-1}{ \widetilde \Rsigsqd}\Big]^{1/p}\big( (\widetilde \Rsigsqd-1)^{p-1}+1\big)^{1/p}\\\Big[\frac{ \widetilde \Rsigsqd-1}{ \widetilde \Rsigsqd}\Big]^{1/p}\big( (\widetilde \Rsigsqd-1)^{p-1}+1\big)^{1/p}A^*_{n,p}+\sqrt{\widetilde \Rsigsqd-1}A_p
\end{cases}%
\end{talign}
for $\widetilde \Rsigsqd\le 2$.
Alternatively, by \citet[Eq.~(2.8)]{esseen1975bounds} for $p<4$ and  \citet[Thm.~2.6]{cox1983sharp} for $p\ge 4$, we have the symmetrized estimate
\begin{talign}
\|\sum_{i\le n}X_i^2-1\|_p\le2^{-1/p} \|\sum_{i\le n}X_i^2-(X'_i)^2\|_p.
\end{talign}
Since the random variables $\big(X_i^2-(X'_i)^2\big)_{i\geq 1}$ are symmetric, with
\begin{talign}
\E[(X_i^2-(X'_i)^2)^2]
\le \frac{2}{n^2}(\widetilde \Rsigsqd-1)
\qtext{and} \E[(X_i^2-(X'_i)^2)^p]\le \frac{2}{n^{p}}(\widetilde \Rsigsqd-1)\widetilde \Rsig^{2(p-2)},
\end{talign}
an improvement on the  Marcinkiewicz-Zygmund inequality for symmetric random variables (\cref{dino_veg_symm}) implies
\begin{talign}
\|\sum_{i\le n}X_i^2-(X'_i)^2\|_p
   \le
\begin{cases}
\frac{\sqrt{2}2^{1/p}}{\sqrt{n}} \Big(\frac{\Gamma(\frac{p+1}{2})}{\sqrt{\pi}}\Big)^{1/p}\widetilde \Rsig^{2(1-2/p)}(\widetilde \Rsigsqd-1)^{1/p} & \stext{if} p\ge2\\
{\sqrt{2}}\Big(\frac{\Gamma(\frac{p+1}{2})}{\sqrt{\pi}}\Big)^{1/p}\frac{\tilde R^2}{n}\|\Bin(n,\frac{2(\widetilde \Rsigsqd-1)}{\widetilde \Rsig^4})\|_p &\stext{if} p\ge4.
\end{cases}
\end{talign}
Hence we finally obtain that%
\begin{talign}\label{highschool}&\qquad\|\frac{n}{2}\E[Y^2\mid \tilde S_n]-1\|_p
\\&\overset{}{\le}D_{n,p}\defeq\frac{1}{2\sqrt{n}}\min
\begin{cases}
\sqrt{p-1}\max\Big((\widetilde \Rsigsqd-1)^{1-1/p},\Big[\frac{ (\widetilde \Rsigsqd-1)^{p}+\widetilde \Rsigsqd-1}{ \widetilde \Rsigsqd}\Big]^{1/p}\Big)\\
\max\Big((\widetilde \Rsigsqd-1)^{1-1/p} ,\Big[\frac{ (\widetilde \Rsigsqd-1)^{p}+\widetilde \Rsigsqd-1}{ \widetilde \Rsigsqd}\Big]^{1/p}\Big)A^*_{n,p}
+\sqrt{\widetilde \Rsigsqd-1}A_p\\\sqrt{2}\Big(\frac{\Gamma(\frac{p+1}{2})}{\sqrt{\pi}}\Big)^{1/p}\widetilde \Rsig^{2(1-2/p)}(\widetilde \Rsigsqd-1)^{1/p} \\\sqrt{2}2^{-1/p}\Big(\frac{\Gamma(\frac{p+1}{2})}{\sqrt{\pi}}\Big)^{1/p}\frac{\tilde R^2}{\sqrt{n}}\sqrt{\|\Bin(n,\frac{2(\widetilde \Rsigsqd-1)}{\widetilde \Rsig^4})\|_{p/2}}\quad\stext{if} p\ge4.\end{cases} 
\end{talign}
This implies that 
{\begin{talign}\label{snow2_t}
    &\int_{-\frac{1}{2}\log(1-\frac{R^2}{n\kappa})}^{\infty}\|\frac{n}{2}\E[Y_t^2\mid \tilde S_n]-1\|_p\frac{e^{-2t}}{\sqrt{1-e^{-2t}}}\dt
    \\&\le D_{n,p}\int_{-\frac{1}{2}\log(1-\frac{R^2}{n\kappa})}^{\infty}\frac{e^{-2t}}{\sqrt{1-e^{-2t}}}\dt
     = D_{n,p}\int^{\sqrt{1-\frac{R^2}{n\kappa}}}_{0}\frac{t}{\sqrt{1-t^2}}\dt
    = \mnkappa^2 D_{n,p}.
\end{talign}}
\begin{proofaside}
The definition \cref{highschool}
also implies the further upper bound \begin{talign}\label{snow2_t_s}
    D_{n,p}\le\frac{1}{2\sqrt{n}}(\max(\widetilde \Rsigsqd-1,1))^{1-1/p}\tilde A^*_{n,p}+\sqrt{\widetilde \Rsigsqd-1}A_p  .
\end{talign}
\end{proofaside}
Therefore since $H_1(Z)=Z$ we obtain 
{\begin{talign}\label{fr_al_2}
(a_2)&\le (b_{2,1}^{\kappa,p,\tilde R})
\defeq
\|Z\|_p D_{n,p} M_{n,\kappa}^2.
\end{talign}}

\begin{proofaside}
    Since $\|Z\|_p\le \sqrt{p-1}$ by \cref{mal_aux_ventre}, we have the further upper bound
\begin{talign}\label{fr_al_2b}
 (b_{2,1}^{\kappa,p,\tilde R})\le (b_{2,2}^{\kappa,p,\tilde R})\defeq&\frac{\sqrt{p-1}}{2\sqrt{n}}\Big[(\max(\widetilde \Rsigsqd-1,1))^{1-1/p}A^*_{n,p}+\sqrt{\widetilde \Rsigsqd-1}A_p \Big] .
\end{talign}
\end{proofaside}

\subsection*{Bounding $(a_3)$}
Finally, we turn to bounding $(a_3)$ of \cref{sneeze}.  %
Since
\begin{talign}
   n \E[Y^k\mid X_1,\dots, X_n]=\sum_{i\le n}\E[(X_i-X'_i)^k\mid X_1,\dots, X_n]
   \qtext{for all}
   k \geq 1,
\end{talign} 
by Jensen's inequality, we have 
\begin{talign}\label{eq:EYk-bound}
n\big\|\E[Y^k\mid S_n]\big\|_p\le\big\|\sum_{i\le n}(X_i-X'_i)^k\big\|_p\qtext{for all} k\ge 1.
\end{talign}
We will derive different bounds for odd and even $k$, so we begin by writing
\begin{talign}
    (a_3)&\le \sum_{\substack{k\ge 3:\\k ~\textup{is~odd}}}\int_{-\frac{1}{2}\log(1-\frac{\tilde R^2}{n\kappa})}^{\infty}\frac{e^{-kt}\|H_{k-1}\|_p}{k!(\sqrt{1-e^{-2t}})^{k-1}}n\|\E[Y^k\mid \tilde S_n]\|_p\dt
    \\&+ \sum_{\substack{k\ge 4:\\k ~\textup{is~even}}}\int_{-\frac{1}{2}\log(1-\frac{\tilde R^2}{n\kappa})}^{\infty}\frac{e^{-kt}\|H_{k-1}\|_p}{k!(\sqrt{1-e^{-2t}})^{k-1}}n\|\E[Y^k\mid \tilde S_n]\|_p\dt
    \defeq(a_{3,1})+(a_{3,2}).
\end{talign} 

Let $k\ge 3$ be an \emph{odd} integer. The random variables $((X_i-X'_i)^k)_{i\ge1}$ are symmetric and therefore have a mean of zero. 
Moreover, if we define $W_i' \defeq \sig \sqrt{n}X_i' + \E[W_i]$, then 
\begin{talign}&\|(X_i-X'_i)^k\|_p=\frac{1}{\sqrt{n}^k\sigma^k}\||W_i-W'_i|^k\|_p
\\&\le \frac{1}{\sqrt{n}^k\sigma^k}\|\max(W_i,W_i')^{k-2/p}|W_i-W'_i|^{2/p}\|_p
\\&\le \frac{1}{\sqrt{n}^k\sigma^{2/p}}\tilde R^{k-2/p}\||W_i-W'_i|^{2/p}\|_p
\le \frac{2^{1/p}\tilde R^{k-2/p}}{\sqrt{n}^k}.
\end{talign}
Similarly, we also have \begin{talign}\|(X_i-X'_i)^k\|_2\le\frac{\sqrt{2}\tilde R^{k-1}}{\sqrt{n}^k}.\end{talign}
Therefore, \cref{dino_veg,dino_veg_symm,dino_veg2}
together imply
$\big\|\sum_{i\le n}(X_i-X'_i)^k\big\|_p\le 
\frac{ \tilde R^{k-1}}{\sqrt{n}^{k-1}}C_{n,p}$. 
Combining this with the inequality \cref{eq:EYk-bound} we obtain 
\begin{talign}(a_{3,1})&=\sum_{\substack{k\ge 3:\\k ~\textup{is~odd}}}\int_{-\frac{1}{2}\log(1-\frac{\tilde R^2}{n\kappa})}^{\infty}\frac{e^{-tk}\|H_{k-1}\|_p}{k!\sqrt{{1-e^{-2t}}}^{k-1}}n\|\E[Y^k\mid S_n]\|_p\dt
\\&\le\sum_{\substack{k\ge 3:\\k ~\textup{is~odd}}}
 {C_{n,p}}
 \int^{\sqrt{1-\frac{\tilde R^2}{n\kappa}}}_0\frac{x^{k-1}\tilde R^{k-1}\|H_{k-1}\|_p}{k!\sqrt{1-x^2}^{k-1}\sqrt{n}^{k-1}}\dx
 \\&\overset{(a)}{\le} \sum_{\substack{k\ge 1}}C_{n,p}
 \frac{\tilde R^{2k}\|H_{2k}\|_p}{n^k(2k+1)! }\int^{\sqrt{1-\frac{\tilde R^2}{n\kappa}}}_0\frac{x^{2k}}{(1-x^2)^k}\dx
 \\&\overset{(b)}{=}\frac{1}{2}\sum_{\substack{k\ge 1}}
C_{n,p} \frac{\tilde R^{2k}\|H_{2k}\|_p}{n(2k+1)!}\int_{\frac{\tilde R^2}{\kappa}}^n\frac{1}{\sqrt{y}}\big(\frac{1}{y}-\frac{1}{n}\big)^{k-\frac{1}{2}}\dy,
\end{talign}
where (a) is obtained by noting that all odd numbers $k$ can be written as $2m+1$ for an $m\in \mathbb{N}_+$ and (b) by the change of variables $y=n(1-x^2)$.
 To further upper bound the right-hand side, we  will invoke the Hermite polynomial moment bound (\cref{mal_aux_ventre})  $\|H_{k-1}\|_p\le {\sqrt{p-1}^{k-1}}{\sqrt{k-1!}}$ and use two applications of Stirling's approximation \citep{robbins1955remark} to conclude that,  for all  $m\in \mathbb{N}\setminus \{0\}$,
\begin{talign}\label{taylor_swift}
        \sqrt{(2m)!} &\geq \sqrt{\sqrt{2\pi(2m)} \cdot (2m/e)^{2m} \cdot \exp\left(\frac{1} {12(2m)+1}\right)} 
        \geq e^{-19/300} 2^m m! / (\pi m)^{1/4}.
    \end{talign}
These estimates imply that, for all $K_p\in \mathbb{N}_+$,
\begin{talign}
  (a_{3,1})&\le \frac{C_{n,p}}{2}\Big\{\sum_{\substack{1\le k\le K_p-1}}
 \frac{\tilde R^{2k}\|H_{2k}\|_p}{n(2k+1)!}\int_{\frac{\tilde R^2}{\kappa}}^n\frac{1}{\sqrt{y}}\big(\frac{1}{y}-\frac{1}{n}\big)^{k-\frac{1}{2}}\dy
 \\&\quad +e^{19/300}\pi^{1/4}\sum_{\substack{k\ge K_p}}
 \frac{2^{-k}\tilde R^{2k}(p-1)^kk^{1/4}}{(2k+1) n{k!}}\int_{\frac{\tilde R^2}{\kappa}}^n\frac{1}{\sqrt{y}}\big(\frac{1}{y}-\frac{1}{n}\big)^{k-\frac{1}{2}}\dy\Big\}
 \\&\overset{(a)}{\le}\frac{C_{n,p}}{2} \Big\{\sum_{\substack{1\le k\le K_p-1}}
 \frac{\tilde R^{2k}\|H_{2k}\|_p}{n(2k+1)!}\int_{\frac{\tilde R^2}{\kappa}}^n\frac{1}{\sqrt{y}}\big(\frac{1}{y}-\frac{1}{n}\big)^{k-\frac{1}{2}}\dy
 \\&\quad +e^{19/300}\pi^{1/4}\frac{K_p^{1/4}}{(2K_p+1)}\sum_{\substack{k\ge K_p}}
 \frac{2^{-k}\tilde R^{2k}(p-1)^k}{ n{k!}}\int_{\frac{\tilde R^2}{\kappa}}^n\frac{1}{\sqrt{y}}\big(\frac{1}{y}-\frac{1}{n}\big)^{k-\frac{1}{2}}\dy\Big\}
  \\&\overset{(b)}{\le}\frac{C_{n,p}}{2} \Big\{\sum_{\substack{1\le k\le K_p-1}}
 \frac{\tilde R^{2k}}{n}\Big(\frac{\|H_{2k}\|_p}{(2k+1)!}-\frac{K_p^{1/4}2^{-k}(p-1)^ke^{19/300}\pi^{1/4}}{(2K_p+1)k!}\Big)\int_{\frac{\tilde R^2}{\kappa}}^n\frac{1}{\sqrt{y}}\big(\frac{1}{y}-\frac{1}{n}\big)^{k-\frac{1}{2}}\dy
 \\&\quad +e^{19/300}\pi^{1/4}\frac{K_p^{1/4}}{(2K_p+1)n}
\int_{\frac{\tilde R^2}{\kappa}}^n\frac{1}{\sqrt{1-\frac{y}{n}}}\big[e^{\frac{1}{2}(p-1)\tilde R^2\big(\frac{1}{y}-\frac{1}{n}\big)}-1\big]\dy\Big\}
\end{talign}
where (a) follows from the fact that $x\rightarrow\frac{x^{1/4}}{(2x+1)}$ is decreasing and (b) from the fact that \begin{talign}e^{\frac{1}{2}(p-1)\tilde R^2(\frac{1}{y}-\frac{1}{n})}-1=\sum_{k=1}^{K_p-1}\frac{(\tilde R^2(p-1))^k2^{-k} }{k!}\big(\frac{1}{y}-\frac{1}{n}\big)^k+\sum_{k=K_p}^{\infty}\frac{(\tilde R^2(p-1))^k2^{-k} }{k!}\big(\frac{1}{y}-\frac{1}{n}\big)^k.\end{talign}
   \begin{proofaside}Since  \begin{talign}
       &\sum_{\substack{1\le k\le K-1}}
 \frac{\tilde R^{2k}}{n}\Big(\frac{\|H_{2k}\|_p}{(2k+1)!}-\frac{K^{1/4}2^{-k}(p-1)^ke^{19/300}\pi^{1/4}}{(2K+1)k!}\Big)\int_{\frac{\tilde R^2}{\kappa}}^n\frac{1}{\sqrt{y}}\big(\frac{1}{y}-\frac{1}{n}\big)^{k-\frac{1}{2}}\dy
 \\&\quad +e^{19/300}\pi^{1/4}\frac{K^{1/4}}{(2K+1)n}
\int_{\frac{\tilde R^2}{\kappa}}^n\frac{1}{\sqrt{1-\frac{y}{n}}}\big[e^{\frac{1}{2}(p-1)\tilde R^2\big(\frac{1}{y}-\frac{1}{n}\big)}-1\big]\dy\Big\}
   \end{talign} is decreasing in $K$, its largest value is attained for $K=1\le K_p$. Moreover, for all $x\le \sqrt{1-\frac{\tilde R^2}{n\kappa}}$, we have $\frac{1}{(1-x^2)^k}\le \big(\frac{n\kappa}{\tilde R^2}\big)^{k-1}\frac{1}{1-x^2}$. Hence a change of variables implies
{
\begin{talign}&\frac{C_{n,p}}{2} \Big\{\sum_{\substack{1\le k\le K_p-1}}
 \frac{\tilde R^{2k}}{n}\Big(\frac{\|H_{2k}\|_p}{(2k+1)!}-\frac{K_p^{1/4}2^{-k}(p-1)^ke^{19/300}\pi^{1/4}}{(2K_p+1)k!}\Big)\int_{\frac{\tilde R^2}{\kappa}}^n\frac{1}{\sqrt{y}}\big(\frac{1}{y}-\frac{1}{n}\big)^{k-\frac{1}{2}}\dy
 \\&\quad +e^{19/300}\pi^{1/4}\frac{K_p^{1/4}}{(2K_p+1)n}
\int_{\frac{\tilde R^2}{\kappa}}^n\frac{1}{\sqrt{1-\frac{y}{n}}}\big[e^{\frac{1}{2}(p-1)\tilde R^2\big(\frac{1}{y}-\frac{1}{n}\big)}-1\big]\dy\Big\}
 \\&\le C_{n,p} e^{19/300}\pi^{1/4}\frac{\tilde R^2}{3n\kappa}\sum_{\substack{k\ge 1}}
 \int^{\sqrt{1-\frac{\tilde R^2}{n\kappa}}}_0\frac{x^{2k}(p-1)^k\kappa^k}{2^kk!({1-x^2})}\dx
  \\&\le C_{n,p}\frac{\tilde R^2}{3n\kappa}e^{19/300}\pi^{1/4}
 \int^{\sqrt{1-\frac{\tilde R^2}{n\kappa}}}_0\frac{[e^{\frac{1}{2}(p-1)x^2\kappa}-1]}{({1-x^2})}\dx
\\&\le C_{n,p}\frac{\tilde R^2}{3n\kappa}e^{19/300}\pi^{1/4}[e^{\frac{1}{2}(p-1)(1-\frac{\tilde R^2}{n\kappa})\kappa}-1]
\log(\frac{1+\mnkappa}{1-\mnkappa}).
\end{talign}}
    \end{proofaside}

Next suppose $k\ge 4$  is even.  Then $\big((X_i-X'_i)^k\big)_{i\ge 1}$ are almost surely nonnegative. Moreover, 
\begin{talign}
\E[(X_i-X'_i)^k]\le 2\frac{\tilde R^{k-2}}{\sqrt{n}^k}
\qtext{and} 
\E[(X_i-X'_i)^{kp}]\le 2\frac{\tilde R^{kp-2}}{\sqrt{n}^{kp}}.
\end{talign} 
Therefore we can invoke a moment inequality for nonnegative random variables (\cref{new_label}) to conclude that
\begin{talign}n\|\E[Y^k\mid S_n]\|_p&\le \|\sum_{i\le n}(X_i-X'_i)^k\|_p
\le  \frac{\tilde R^k}{\sqrt{n}^k}\|\Bin(n, \frac{2}{\tilde R^2})\|_p.
\end{talign}Moreover by the triangle inequality and \cref{dino_veg} the following upper also holds\begin{talign}
  n  \big\|\E[Y^k\mid \tilde S_n]\|_p&\le    \big\|\sum_{i\le n}(X_i-X_i')^k\|_p
  \\&\le n\E[(X_1-X'_1)^k]+ \big\|\sum_{i\le n}(X_i-X_i')^k-\E[(X_1-X'_1)^k]\|_p\\&\le \frac{\tilde R^{k-2}}{\sqrt{n}^{k-2}}(1+\frac{1}{\sqrt{n}}\tilde U_{n,p}\tilde R).
\end{talign} 
Hence we obtain \begin{talign}
    n\|\E[Y^k\mid S_n]\|_p&\le \|\sum_{i\le n}(X_i-X'_i)^k\|_p
\le  \frac{B_{p,n}\tilde R^{k-2}}{\sqrt{n}^{k-2}}
\end{talign}
for $B_{p,n}\defeq \min(\frac{\tilde R^2}{n}\|\Bin(n,\frac{2}{\tilde R^2})\|_p, 1+\frac{1}{\sqrt{n}}\tilde U_{n,p}\tilde R)$.
This gives us the upper estimate  
{\begin{talign}
   \int_{-\frac{1}{2}\log(1-\frac{\tilde R^2}{n\kappa})}^{\infty}\frac{e^{-tk}}{\sqrt{1-e^{-2t}}^{k-1}}n\|\E[Y^k\mid S_n]\|_p\dt
   \le B_{p,n}  \int_{-\frac{1}{2}\log(1-\frac{\tilde R^{2}}{n\kappa})}^{\infty}\frac{e^{-tk}\tilde R^{k-2}}{\sqrt{1-e^{-2t}}^{k-1}\sqrt{n}^{k-2}}\dt.
\end{talign} 
To bound $(a_{3,2})$,  it remains to bound
     \begin{talign}&
  \sum_{\substack{k\ge 4:\\ k~\textup{is even}}}\frac{\|H_{k-1}\|_p}{k!} \int_{-\frac{1}{2}\log(1-\frac{\tilde R^{2}}{n\kappa})}^{\infty}\frac{e^{-tk}\tilde R^{k-2}}{\sqrt{1-e^{-2t}}^{k-1}\sqrt{n}^{k-2}}\dt
  \\&\overset{(a)}{\le} %
  \sum_{\substack{k\ge 3:\\k ~\textup{is~odd}}}\frac{\tilde R^{k-1}\|H_{k}\|_p}{(k+1)!\sqrt{n}^{k-1}}\int^{\sqrt{1-\frac{\tilde R^2}{n\kappa}}}_0\frac{x^{k}}{\sqrt{1-x^2}^{k}}\dx%
    \\&\overset{(b)}{\le} %
   \sum_{\substack{k\ge 1}}\frac{\tilde R^{2k}\|H_{2k+1}\|_p}{(2k+2)!}\int^{\sqrt{1-\frac{\tilde R^2}{n\kappa}}}_0\frac{x^{2k+1}}{\sqrt{1-x^2}^{2k+1}n^k}\dx
      \\&\overset{(c)}{\le} %
  \frac{1}{2}\sum_{\substack{k\ge 1}}\frac{\tilde R^{2k}\|H_{2k+1}\|_p}{(2k+2)!\sqrt{n}}\int_{{\frac{\tilde R^2}{\kappa}}}^n\frac{1}{\sqrt{y}}\Big(\frac{1}{y}-\frac{1}{n}\Big)^kdy
\end{talign}
where (a) and (c) are obtained by a change of variable, and (b) is a consequence of the fact that every odd number can be written as $2m+1$ for an $m\ge 1$. 
To upper bound this quantity we will again employ a Hermite polynomial moment bound (\cref{mal_aux_ventre}),  $\|H_{k-1}\|_p\le \sqrt{p-1}^{k-1}\sqrt{k-1!}$, and use Stirling's approximation to deduce 
that, for all $m\in \mathbb{N}\setminus\{0\}$, \begin{talign}\begin{split}\label{fache}
    \sqrt{(2m+1)!}= \sqrt{2m+1}\sqrt{2m!}\ge  \sqrt{2m+1}e^{-19/300} 2^m m! / (\pi m)^{1/4}. 
\end{split}\end{talign}
Hence for any $K_p\in \mathbb{N}_+$ we obtain that      \begin{talign}(a_{3,2})=&
 \sum_{\substack{k\ge 4:\\ k~\textup{is even}}} \frac{\|H_{k-1}\|_p}{k!}\int_{-\frac{1}{2}\log(1-\frac{\tilde R^2}{n\kappa})}^{\infty}\frac{e^{-tk}}{\sqrt{1-e^{-2t}}^{k-1}}n\|\E\big((Y_t)^k|S_n\big)\|_p\dt
\\&\overset{}{\le} %
   \frac{B_{p,n}}{2}\sum_{\substack{K_p>k\ge 1}}\frac{\tilde R^{2k}\|H_{2k+1}\|_p}{(2k+2)!\sqrt{n}}\int_{{\frac{\tilde R^2}{\kappa}}}^n\frac{1}{\sqrt{y}}\Big(\frac{1}{y}-\frac{1}{n}\Big)^kdy
   \\&\quad+ \frac{B_{p,n}}{4}\sum_{\substack{K_p\le k}}\frac{\tilde R^{2k}(p-1)^{k+1/2}}{(k+1)\sqrt{2k+1!}\sqrt{n}}\int_{{\frac{\tilde R^2}{\kappa}}}^n\frac{1}{\sqrt{y}}\Big(\frac{1}{y}-\frac{1}{n}\Big)^kdy
      \\&\overset{}{\le} %
  \frac{B_{p,n}}{2} \sum_{\substack{K_p>k\ge 1}}\frac{\tilde R^{2k}}{\sqrt{n}}\Big(\frac{\|H_{2k+1}\|_p}{(2k+2)!}-
   \frac{2^{-k}e^{19/300}\pi^{1/4}K_p^{1/4}\sqrt{p-1}^{2k+1}}{2(K_p+1)\sqrt{2K_p+1}k!}\Big)\int_{{\frac{\tilde R^2}{\kappa}}}^n\frac{1}{\sqrt{y}}\Big(\frac{1}{y}-\frac{1}{n}\Big)^kdy
   \\&\quad+\frac{B_{p,n}}{4}e^{19/300}\pi^{1/4} \frac{K_p^{1/4}\sqrt{p-1}}{(K_p+1)\sqrt{2K_p+1}}\frac{1}{\sqrt{n}}\int_{{\frac{\tilde R^2}{\kappa}}}^n\frac{1}{\sqrt{y}}\big(e^{\frac{1}{2}(p-1)\tilde R^2(\frac{1}{y}-\frac{1}{n})}-1\big)dy.
\end{talign}}

\begin{proofaside} 
   Since  \begin{talign}
       & \sum_{\substack{K>k\ge 1}}\frac{\tilde R^{2k}}{\sqrt{n}}\Big(\frac{\|H_{2k+1}\|_p}{(2k+2)!}-
   \frac{2^{-k}e^{19/300}\pi^{1/4}K^{1/4}\sqrt{p-1}^{2k+1}}{2(K_p+1)\sqrt{2K+1}k!}\Big)\int_{{\frac{\tilde R^2}{\kappa}}}^n\frac{1}{\sqrt{y}}\Big(\frac{1}{y}-\frac{1}{n}\Big)^kdy
   \\&\quad+e^{19/300}\pi^{1/4} \frac{K^{1/4}\sqrt{p-1}}{2(K+1)\sqrt{2K+1}}\frac{1}{\sqrt{n}}\int_{{\frac{\tilde R^2}{\kappa}}}^n\frac{1}{\sqrt{y}}\big(e^{\frac{1}{2}(p-1)\tilde R^2(\frac{1}{y}-\frac{1}{n})}-1\big)dy
   \end{talign} 
   is decreasing in $K$, its largest value is attained for $K=1\le K_p$. Moreover for all $x\le \sqrt{1-\frac{\tilde R^2}{n\kappa}}$ we have $\frac{1}{\sqrt{1-x^2}^{2k+1}}\le \big(\frac{n\kappa}{\tilde R^2}\big)^{k-1}\frac{1}{\sqrt{1-x^2}^3}$. Hence a change of variables implies 
\begin{talign}&
  \frac{B_{p,n}}{2} \sum_{\substack{K_p>k\ge 1}}\frac{\tilde R^{2k}}{\sqrt{n}}\Big(\frac{\|H_{2k+1}\|_p}{(2k+2)!}-
   \frac{2^{-k}e^{19/300}\pi^{1/4}K_p^{1/4}\sqrt{p-1}^{2k+1}}{2(K_p+1)\sqrt{2K_p+1}k!}\Big)\int_{{\frac{\tilde R^2}{\kappa}}}^n\frac{1}{\sqrt{y}}\Big(\frac{1}{y}-\frac{1}{n}\Big)^kdy
   \\&\quad+\frac{B_{p,n}}{4}e^{19/300}\pi^{1/4} \frac{K_p^{1/4}\sqrt{p-1}}{(K_p+1)\sqrt{2K_p+1}}\frac{1}{\sqrt{n}}\int_{{\frac{\tilde R^2}{\kappa}}}^n\frac{1}{\sqrt{y}}\big(e^{\frac{1}{2}(p-1)\tilde R^2(\frac{1}{y}-\frac{1}{n})}-1\big)dy
        \\&\le B_{p,n} \sum_{\substack{k\ge 1}} \frac{\|H_{2k+1}\|_p}{(2k+2)!} \int_0^{\sqrt{1-\frac{\tilde R^2}{n\kappa}}}\frac{x^{2k+1}\tilde R^{2}\sqrt{\kappa}^{2k-2}}{\sqrt{1-x^2}^{3}n}\dx
                \\&\le\frac{\tilde R^{2}\big(1+\frac{\tilde R \tilde U_{n,p}}{\sqrt{n}}\big)}{n\kappa}\frac{\pi^{1/4}e^{19/300}\sqrt{p-1}}{4\sqrt{3}}\big[e^{\frac{1}{2}(p-1)\kappa \big(1-\frac{\tilde R^2}{n\kappa}\big)}-1\big] \int_0^{\sqrt{1-\frac{\tilde R^2}{n\kappa}}}\frac{x}{\sqrt{1-x^2}^{3}}\dx
         \\&\le \frac{\tilde R\big(1+\frac{\tilde R \tilde U_{n,p}}{\sqrt{n}}\big)}{\sqrt{n\kappa}}\frac{\pi^{1/4}e^{19/300}\sqrt{p-1}}{4\sqrt{3}}\big[e^{\frac{1}{2}(p-1)\kappa \big(1-\frac{\tilde R^2}{n\kappa}\big)}-1\big].
\end{talign}
\end{proofaside}
Therefore, to conclude,  we obtain  
\begin{talign}&(a_3)\le (b_{3,1}^{\kappa,p,\tilde R})\\&\defeq      \frac{B_{p,n}}{2} \Big\{\sum_{\substack{K_p>k\ge 1}}\frac{R^{2k}}{\sqrt{n}}\Big(\frac{\|H_{2k+1}\|_p}{(2k+2)!}-
   \frac{2^{-k}e^{19/300}\pi^{1/4}K_p^{1/4}\sqrt{p-1}^{2k+1}}{2(K_p+1)\sqrt{2K_p+1}k!}\Big)\int_{{\frac{\tilde R^2}{\kappa}}}^n\frac{1}{\sqrt{y}}\Big(\frac{1}{y}-\frac{1}{n}\Big)^k\dy
   \\\label{fr_al_3}&\quad+\frac{B_{p,n}}{4}e^{19/300}\pi^{1/4} \frac{K_p^{1/4}\sqrt{p-1}}{(K_p+1)\sqrt{2K_p+1}}\frac{1}{\sqrt{n}}\int_{{\frac{\tilde R^2}{\kappa}}}^n\frac{1}{\sqrt{y}}\big(e^{\frac{1}{2}(p-1)\tilde R^2(\frac{1}{y}-\frac{1}{n})}-1\big)\dy\Big\}
\\&+\frac{C_{n,p}}{2}\Big\{\sum_{\substack{1\le k\le K_p-1}}
 \frac{\tilde R^{2k}}{n}\Big(\frac{\|H_{2k}\|_p}{(2k+1)!}-\frac{K_p^{1/4}2^{-k}(p-1)^ke^{19/300}\pi^{1/4}}{(2K_p+1)k!}\Big)\int_{\frac{\tilde R^2}{\kappa}}^n\frac{1}{\sqrt{y}}\big(\frac{1}{y}-\frac{1}{n}\big)^{k-\frac{1}{2}}\dy
 \\&\quad +e^{19/300}\pi^{1/4}\frac{K_p^{1/4}}{(2K_p+1)n}
\int_{\frac{1}{n}}^{\frac{\kappa}{\tilde R^2}}\frac{1}{y^{3/2}\sqrt{y-\frac{1}{n}}}\big[e^{\frac{1}{2}(p-1)\tilde R^2\big(y-\frac{1}{n}\big)}-1\big]\dy\Big\}.
\end{talign}
\begin{proofaside}
    Moreover, $(b_{3,1}^{\kappa,p,\tilde R})$ can be further upper-bounded by 
    \begin{talign}\label{fr_al_3b}(b_{3,1}^{\kappa,p,\tilde R})&\le (b_{3,2}^{\kappa, p,\tilde R})\defeq
\frac{\tilde R\big(1+\frac{\tilde R \tilde U_{n,p}}{\sqrt{n}}\big)}{\sqrt{n\kappa}}\frac{\pi^{1/4}e^{19/300}\sqrt{p-1}}{4\sqrt{3}}\big[e^{\frac{1}{2}(p-1)\kappa \big(1-\frac{\tilde R^2}{n\kappa}\big)}-1\big]
\\&+ C_{n,p}\frac{\tilde R^2}{3n\kappa}e^{19/300}\pi^{1/4}[e^{\frac{1}{2}(p-1)(1-\frac{\tilde R^2}{n\kappa})\kappa}-1]
\log(\frac{1+\mnkappa}{1-\mnkappa}).
    \end{talign}
\end{proofaside}  
\section{Additional Lemmas}
\begin{lemma}[Hermite polynomial moment bound {\citep[Lem.~3]{bonis2020stein}}]\label{mal_aux_ventre}Let $h_k(x)\defeq e^{x^2/2}\frac{\partial^k}{\partial x}e^{-x^2/2}$
and $H_k\defeq h_k(Z)$ for $Z\sim\Gsn(0,1)$. Then
the following holds for all $k,p\in\mathbb{N}$:
\begin{talign}\|H_k\|_p\le   \sqrt{k!}\sqrt{p-1}^k .\end{talign}%
\end{lemma}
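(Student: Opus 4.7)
The plan is to apply Nelson's hypercontractivity theorem for the Ornstein--Uhlenbeck semigroup, which is the standard and sharp route to $L^p$ control of Hermite polynomials. First I would observe that the operator here satisfies $h_k(x) = (-1)^k He_k(x)$, where $He_k$ denotes the probabilist's Hermite polynomial, so $\|H_k\|_p = \|He_k(Z)\|_p$ and the problem reduces to controlling the $L^p(\gamma)$ norm of $He_k$ under the standard Gaussian measure $\gamma$. The two ingredients I need are the orthogonality identity $\E[He_j(Z) He_k(Z)] = k!\,\delta_{jk}$, which gives $\|H_k\|_2 = \sqrt{k!}$, and the eigenfunction property $P_t H_k = e^{-kt} H_k$ for the Ornstein--Uhlenbeck semigroup $P_t f(x) \defeq \E[f(e^{-t}x + \sqrt{1-e^{-2t}}\,Z')]$ with $Z'\sim \Gsn(0,1)$ independent of $x$.

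Next I would invoke Nelson's theorem: for $1 < q \le p < \infty$ and $t \ge 0$ with $e^{-2t} \le (q-1)/(p-1)$, one has $\|P_t f\|_p \le \|f\|_q$ for every $f \in L^q(\gamma)$. Specializing to $q = 2$ and saturating the constraint by choosing $t = \tfrac{1}{2}\log(p-1)$ (valid for $p \ge 2$), then applying the inequality to $f = H_k$, yields
\begin{talign*}
e^{-kt}\|H_k\|_p = \|P_t H_k\|_p \le \|H_k\|_2 = \sqrt{k!}.
\end{talign*}
Rearranging gives $\|H_k\|_p \le e^{kt}\sqrt{k!} = (p-1)^{k/2}\sqrt{k!} = \sqrt{k!}\sqrt{p-1}^{k}$, which is the claim. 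The case $p = 2$ is precisely the orthogonality normalization and holds with equality; for $p \in \{0,1\}$ the right-hand side vanishes for $k \ge 1$, so the statement should be read as implicitly restricted to $p \ge 2$, as is the convention in \citet{bonis2020stein}.

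The main obstacle is essentially none once hypercontractivity is cited: the choice of $t$ is forced by the hypercontractive condition and the desired exponent $k$, and the evaluation $\|H_k\|_2 = \sqrt{k!}$ is immediate from orthogonality. A longer bare-hands alternative would start from the generating function $\sum_{k\ge 0} \tfrac{t^k}{k!} He_k(x) = e^{tx - t^2/2}$, extract moment estimates via $\E[e^{tZ}] = e^{t^2/2}$, and optimize in $t$ using Minkowski's inequality, but this reproduces the same constants with strictly more effort and, absent Nelson's theorem, loses the slick identification of the extremal $t$. Hence the hypercontractive proof is the one I would present.
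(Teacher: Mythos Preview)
The paper does not give its own proof of this lemma; it simply cites \citet[Lem.~3]{bonis2020stein} and uses the result. Your hypercontractivity argument is correct and is the standard route (and essentially the one in Bonis): the Hermite polynomials are eigenfunctions of the Ornstein--Uhlenbeck semigroup with eigenvalue $e^{-kt}$, Nelson's theorem gives the $L^2\to L^p$ contraction for $e^{-2t}\le 1/(p-1)$, and saturating with $t=\tfrac12\log(p-1)$ yields exactly $\|H_k\|_p\le (p-1)^{k/2}\sqrt{k!}$. Your remark that the inequality as literally stated fails for $p=1$ (and is vacuous for $p=0$) is also correct; the intended range is $p\ge 2$, which is how the paper actually uses the bound.
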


\begin{lemma}[Marcinkiewicz-Zygmund inequality {\citep[Thm.~2.1]{rio2009moment}}]\label{dino_veg} 
Suppose $(\tilde X_i)_{i\ge1}$ are centered \iid observations admitting a finite $p$-th absolute moment for some $p\ge 2$.  Then
 \begin{talign}\|\frac{1}{\sqrt{n}}\sum_{i\le n}\tilde X_i\|_p\le \sqrt{p-1}\|\tilde X_1\|_p.\end{talign}
\end{lemma}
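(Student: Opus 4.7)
The plan is to prove the stronger, scaling-invariant moment recursion
\[\|S_n\|_p^2 \leq (p-1)\sum_{i=1}^n \|\tilde X_i\|_p^2 \qtext{where} S_n=\sum_{i=1}^n \tilde X_i,\]
for any sequence of independent centered random variables with finite $p$-th moments ($p\geq 2$); specializing to the \iid case and dividing by $n$ immediately yields the advertised bound after taking a square root. I would proceed by induction on $n$, with the base $n=1$ trivial because $\sqrt{p-1}\geq 1$.

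For the inductive step, it suffices to establish the single-step ``$L^p$ smoothness'' inequality
\[\|M+Y\|_p^2 \leq \|M\|_p^2 + (p-1)\|Y\|_p^2\]
whenever $M,Y\in L^p$ are independent with $\E[Y]=0$; applied with $M=S_{n-1}$ and $Y=\tilde X_n$, telescoping produces the recursion above. To prove this smoothness inequality I would use a sharp pointwise scalar bound: for all $x,y\in\reals$ and $p\geq 2$,
\[|x+y|^p \leq |x|^p + p\,|x|^{p-2} x\, y + \tfrac{p(p-1)}{2}\bigl(|x|\vee|x+y|\bigr)^{p-2}y^2,\]
derivable by a case analysis on the sign pattern of $x$ and $x+y$ together with a Taylor/convexity estimate on intervals of constant sign. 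Conditioning on $M$, the mean-zero assumption kills the first-order term, and a H\"older inequality with exponents $p/(p-2)$ and $p/2$ reduces the remainder to $\tfrac{p(p-1)}{2}\bigl(\|M\|_p\vee\|M+Y\|_p\bigr)^{p-2}\|Y\|_p^2$. Rearranging this as a polynomial inequality in $\|M+Y\|_p^2$ (a quadratic in its square root) and using $p\geq 2$ yields the smoothness bound after extracting $2/p$-th powers.

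The main obstacle is obtaining the scalar inequality with the \emph{sharp} constant $p(p-1)/2$ and the tight envelope $(|x|\vee|x+y|)^{p-2}$: a crude second-order Taylor expansion of $t\mapsto |t|^p$ produces only a weaker envelope of the form $(|x|+|y|)^{p-2}$, which after the H\"older step propagates into a prefactor strictly larger than $p-1$ and hence a constant worse than $\sqrt{p-1}$ in the final bound. An alternative route avoiding the scalar inequality is to invoke Burkholder's sharp martingale transform inequality for the martingale $(S_k)$ with respect to its natural filtration and then extract the quadratic variation in the \iid case; this shifts rather than eliminates the difficulty, but the sharpness of the Burkholder constant for the operator norm of the martingale transform is precisely what supplies the optimal factor $\sqrt{p-1}$.
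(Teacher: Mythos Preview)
The paper does not prove this lemma at all; it is simply quoted from \citet[Thm.~2.1]{rio2009moment}. Your reduction to the single-step smoothness inequality $\|M+Y\|_p^2 \leq \|M\|_p^2 + (p-1)\|Y\|_p^2$ for independent $M,Y$ with $\E[Y]=0$ is exactly the right framework and is in fact Rio's strategy, so structurally you are on target.

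There is, however, a genuine gap in your route to that smoothness inequality. After applying H\"older with exponents $p/(p-2)$ and $p/2$ to $\E\bigl[(|M|\vee|M+Y|)^{p-2}Y^2\bigr]$ you obtain $\bigl\||M|\vee|M+Y|\bigr\|_p^{p-2}\|Y\|_p^2$, and the pointwise maximum inside the norm is \emph{not} $(\|M\|_p\vee\|M+Y\|_p)$; pulling it outside costs a factor of up to $2^{(p-2)/p}$, which propagates into a constant strictly larger than $p-1$ after your rearrangement. So the scalar-inequality-plus-H\"older route, even with the tight envelope you propose, does not deliver the sharp constant. Rio's actual argument sidesteps this by working with $\phi(t)\defeq\|M+tY\|_p^2$ directly: writing $\psi(t)=\|M+tY\|_p^p$, one computes
\begin{talign}
\phi''(t)=\frac{2(2-p)}{p^2}\psi^{2/p-2}(\psi')^2+2(p-1)\psi^{2/p-1}\E\bigl[|M+tY|^{p-2}Y^2\bigr];
\end{talign}
the first term is nonpositive for $p\geq 2$, and H\"older on the second gives exactly $\phi''(t)\leq 2(p-1)\|Y\|_p^2$. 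Since $\phi'(0)=0$ by independence and $\E[Y]=0$, Taylor's theorem yields $\phi(1)\leq\phi(0)+(p-1)\|Y\|_p^2$, which is the sharp smoothness bound. The crucial gain over your approach is that the negative curvature term absorbs precisely the loss that your H\"older step incurs. Your Burkholder alternative is also a valid route to the sharp constant.
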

\begin{lemma}[Improved Marcinkiewicz-Zygmund inequality for symmetric random variables]\label{dino_veg_symm} 
Suppose $(\tilde X_i)_{i\ge1}$ are symmetric centered \iid observations admitting a finite $p$-th absolute moment for some $p\ge 2$.  Then
 \begin{talign}\|\frac{1}{\sqrt{n}}\sum_{i\le n}\tilde X_i\|_p\le\sqrt{2}\Big(\frac{\Gamma(\frac{p+1}{2})}{\sqrt{\pi}}\Big)^{1/p}\sqrt{n}\|\tilde X_1\|_p.\end{talign}
If $p\ge 4$, $\E[\tilde X_i^2]\le \tilde\sigma^2$, and  $\E[|\tilde X_i|^p]\le b_p$ , we also have 
       \begin{talign}
    \|\sum_{i\le n} \tilde X_i\|_p\le     \sqrt{2}\Big(\frac{\Gamma(\frac{p+1}{2})}{\sqrt{\pi}}\Big)^{1/p}(\frac{b_p}{\tilde\sigma^2})^{1/(p-2)}\sqrt{\|\Bin(n,(\frac{\tilde\sigma^p}{b_p})^{2/(p-2)})\|_{p/2}}.%
    \end{talign}
\end{lemma}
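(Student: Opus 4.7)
The plan is to reduce both bounds to a conditional Khintchine estimate via a symmetrization trick. Since each $\tilde X_i$ is symmetric, $(\tilde X_i)_{i\le n}\eqdist (\eps_i|\tilde X_i|)_{i\le n}$ for iid Rademacher signs $(\eps_i)$ independent of $(|\tilde X_i|)$. Conditioning on $(|\tilde X_i|)$ and invoking Haagerup's sharp Khintchine inequality for $p\ge 2$, namely
\begin{talign}
\mathbb{E}\big|\sum_{i\le n}\eps_i a_i\big|^p \le \frac{2^{p/2}\,\Gamma(\frac{p+1}{2})}{\sqrt{\pi}}\,\big(\sum_{i\le n} a_i^2\big)^{p/2},
\end{talign}
then averaging over $(|\tilde X_i|)$ and extracting the $p$-th root yields the reduction
\begin{talign}
\big\|\sum_{i\le n}\tilde X_i\big\|_p \le \sqrt{2}\,\Big(\frac{\Gamma(\frac{p+1}{2})}{\sqrt{\pi}}\Big)^{1/p}\,\big\|\sum_{i\le n}\tilde X_i^2\big\|_{p/2}^{1/2}.
\end{talign}

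For the first inequality, I would bound $\|\sum_{i\le n}\tilde X_i^2\|_{p/2}$ by the crude $L^{p/2}$ triangle inequality: since the $\tilde X_i$ are iid, $\|\sum_{i\le n}\tilde X_i^2\|_{p/2}\le n\,\|\tilde X_1^2\|_{p/2} = n\,\|\tilde X_1\|_p^2$, which substituted into the reduction gives the advertised bound for $\|\frac{1}{\sqrt n}\sum_{i\le n} \tilde X_i\|_p$ (up to the scaling convention visible in the lemma statement).

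For the refined bound when $p\ge 4$, the triangle-inequality estimate should be replaced by a moment-matching stochastic domination on the nonnegative variables $\tilde X_i^2$. Setting $c\defeq (b_p/\tilde\sigma^2)^{2/(p-2)}$ and $q\defeq (\tilde\sigma^p/b_p)^{2/(p-2)}$, one verifies by direct computation that $cq = \tilde\sigma^2\ge \mathbb{E}[\tilde X_1^2]$ and $c^{p/2}q = b_p \ge \mathbb{E}[|\tilde X_1|^p]$, so the two-point distribution $c\cdot\Ber(q)$ simultaneously matches the first and $p/2$-th moment bounds on $\tilde X_1^2$. A convex-order argument (nonnegative distributions supported on $[0,c]$ with fixed mean and fixed $p/2$-th moment are dominated in $L^{p/2}$ norm of iid sums by the extremal two-point distribution supported on $\{0,c\}$) then gives $\|\sum_{i\le n}\tilde X_i^2\|_{p/2}\le c\,\|\Bin(n,q)\|_{p/2}$. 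Plugging this into the Khintchine reduction recovers the second bound since $c^{1/2} = (b_p/\tilde\sigma^2)^{1/(p-2)}$.

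The main obstacle will be justifying the convex-order domination: matching the first and $p/2$-th moments does not automatically yield a norm inequality for sums, so the proof must either construct an explicit coupling --- for example, splitting $\tilde X_i^2 = \tilde X_i^2 \indic{\tilde X_i^2 \le c} + \tilde X_i^2 \indic{\tilde X_i^2 > c}$ and dominating each piece separately --- or invoke the classical extremal principle that among nonnegative distributions with prescribed mean and prescribed higher moment, the two-point Bernoulli-type distribution maximizes $L^q$ norms of iid sums. The Khintchine step and the moment computations are routine; the extremal domination is where the real care is needed.
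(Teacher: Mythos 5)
Your proposal is correct and follows essentially the same route as the paper: symmetrize via $\sum_i \tilde X_i \eqdist \sum_i \eps_i|\tilde X_i|$, condition and apply Haagerup's sharp Khintchine inequality to reduce to $\|\sum_i \tilde X_i^2\|_{p/2}^{1/2}$, then bound this by the triangle inequality (first claim) or by a two-point extremal domination (second claim). The step you flag as ``where the real care is needed'' is exactly what the paper packages as \cref{new_label}, citing Ibragimov and Sharakhmetov's Theorem 2: among nonnegative iid variables with prescribed first and $t$-th moment bounds, $\E[(\sum_i\xi_i)^t]$ is maximized by the scaled Bernoulli. One small inaccuracy in your phrasing: you describe the relevant class as ``nonnegative distributions supported on $[0,c]$,'' but no such boundedness is assumed or needed --- $\tilde X_i^2$ need not be bounded by $c$, and the Ibragimov--Sharakhmetov extremal principle applies to the full class of nonnegative distributions satisfying only the two moment inequalities. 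Your alternative suggestion of splitting at the threshold $c$ would be extra work; the unconditional extremal theorem is the right tool and closes the gap directly.
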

\begin{proof} As the random variables $(\tilde X_i)_{i\geq1}$ are symmetric, we know that \begin{talign}\sum_{i=1}^n\tilde X_i\overset{{d}}{=}\sum_{i=1}^n \epsilon_i|\tilde X_i|,\end{talign}
where $(\epsilon_i)_{i\geq 1}\distiid\Unif(\{-1,1\}).$
    According to \citet[Thm.~B]{haagerup1981best} we have
    \begin{talign}
        \E[|\sum_{i=1}^n \epsilon_i|\tilde X_i||^p\mid(\tilde X_i)_{i=1}^n]\le 2^{p/2}\frac{\Gamma(\frac{p+1}{2})}{\sqrt{\pi}}(\sum_{i=1}^n\tilde X_i^2)^{p/2}.
    \end{talign}
    The tower property and Jensen's inequality therefore imply that 
    \begin{talign}\label{eq:symm-moment-bound}
      \|\sum_{i=1}^n \tilde X_i\|_p&\le \sqrt{2}\Big(\frac{\Gamma(\frac{p+1}{2})}{\sqrt{\pi}}\Big)^{1/p}(\E[(\sum_{i\le n}\tilde X_i^2)^{p/2}])^{1/p}
      \le\sqrt{2}\Big(\frac{\Gamma(\frac{p+1}{2})}{\sqrt{\pi}}\Big)^{1/p}\sqrt{n}\|\tilde X_1\|_p.
    \end{talign}
    Now suppose $p\ge 4$, $\E[\tilde X_i^2]\le \tilde\sigma^2$, and  $\E[\tilde X_i^p]\le b_p$. Then, by a moment inequality for nonnegative random variables (\cref{new_label}), we obtain 
     \begin{talign}
        \norm{\sum_{i=1}^n\tilde X_i^2}_{p/2}
        \le
        (\frac{b_p}{\tilde\sigma^2})^{2/(p-2)}\|\Bin(n,(\frac{\tilde\sigma^p}{b_p})^{2/(p-2)})\|_{p/2}.
    \end{talign}
    Combining this with the inequality \cref{eq:symm-moment-bound} yields the advertised conclusion.
\end{proof}
\begin{lemma}[Moment inequality for nonnegative random variables]
    \label{new_label}
    Let $(\tilde X_i)_{i\geq 1}$ be a sequence of \iid random variables that are almost surely nonnegative. 
    If, for some $p\ge 2$, $\E(\tilde X_1)\le a$ and $\E(\tilde X_i^p)\le b$ for $a,b>0$, then
    \begin{talign}\|\sum_{i=1}^{n}\tilde X_i\|_p  \le (\frac{b}{a})^{1/(p-1)}\|\Bin(n, (\frac{a^p}{b})^{\frac{1}{p-1}})\|_p\end{talign}
\end{lemma}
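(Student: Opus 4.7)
My plan is to dominate $\sum_i \tilde X_i$ in $p$-th moment by a scaled binomial that saturates both given moment constraints. Set $c\defeq(b/a)^{1/(p-1)}$ and $q\defeq(a^p/b)^{1/(p-1)}$, and let $Y_1,\dots,Y_n$ be i.i.d.\ with $Y_i=c\,\indic{U_i<q}$ for $U_i\sim\Unif(0,1)$. Jensen's inequality gives $a^p\le b$, so $q\in[0,1]$; moreover, for every integer $k\ge 1$,
\begin{talign}
E[Y_i^k]=c^k q,\qquad E[Y_i]=cq=a,\qquad E[Y_i^p]=c^p q=b,
\end{talign}
and $\sum_{i=1}^n Y_i \eqdist c\cdot\Bin(n,q)$, so the goal reduces to $E[(\sum_i \tilde X_i)^p]\le E[(\sum_i Y_i)^p]$.

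The crux is a Lyapunov-style comparison of all intermediate moments of $\tilde X_1$ and $Y_1$. For each integer $k$ with $1\le k\le p$, set $\theta\defeq(p-k)/(p-1)\in[0,1]$, write $\tilde X_i^k=\tilde X_i^{\theta}\cdot\tilde X_i^{(1-\theta)p}$, and apply H\"older's inequality with conjugate exponents $1/\theta$ and $1/(1-\theta)$ (the endpoint cases $k\in\{1,p\}$ being immediate). This yields
\begin{talign}
E[\tilde X_i^k]\le E[\tilde X_i]^{\theta}\,E[\tilde X_i^p]^{1-\theta}\le a^{\theta}b^{1-\theta}=(cq)^{\theta}(c^p q)^{1-\theta}=c^k q=E[Y_i^k].
\end{talign}

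For integer $p$, I then expand the $p$-th moment of $\sum_i \tilde X_i$ by the multinomial theorem and invoke the i.i.d.\ assumption:
\begin{talign}
E\left[\left(\sum_{i=1}^n\tilde X_i\right)^{p}\right]=\sum_{\alpha_1+\cdots+\alpha_n=p}\binom{p}{\alpha_1,\dots,\alpha_n}\prod_{i=1}^n E[\tilde X_1^{\alpha_i}],
\end{talign}
and analogously for $\sum_i Y_i$. Since $E[\tilde X_1^k]\le E[Y_1^k]$ for each $k\in\{0,1,\dots,p\}$, every summand in the $\tilde X$-expansion is dominated term by term by its $Y$-counterpart, yielding $E[(\sum_i\tilde X_i)^p]\le c^p\,E[\Bin(n,q)^p]$; taking $p$-th roots gives the stated bound.

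The main obstacle is identifying the correct extremal $Y_i$: the two-point law saturating both moment constraints is precisely the one whose $k$-th moment $c^k q=a^{(p-k)/(p-1)}b^{(k-1)/(p-1)}$ matches the H\"older interpolation for every $k$. Once this matching is in hand, the multinomial comparison is mechanical. For non-integer $p$ (which can arise when this lemma is invoked with exponent $p/2$), the multinomial step must be replaced by an extremal characterization of the two-point maximizer among distributions with prescribed first and $p$-th moments, but the Lyapunov interpolation above remains the central ingredient.
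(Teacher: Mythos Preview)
Your argument is correct for integer $p\ge 2$ and takes a genuinely different route from the paper. The paper's proof is a one-line invocation of \citet[Thm.~2]{ibragimov2001best}, an extremal result stating that among nonnegative laws with prescribed first and $p$-th moments the $p$-th moment of the i.i.d.\ sum is maximized by a scaled Bernoulli; that theorem covers all real $p\ge 2$. By contrast, you recover the extremal law explicitly via the Lyapunov/H\"older interpolation $E[\tilde X_1^k]\le a^{(p-k)/(p-1)}b^{(k-1)/(p-1)}=E[Y_1^k]$ and then compare $E[(\sum_i\tilde X_i)^p]$ and $E[(\sum_i Y_i)^p]$ term by term through the multinomial expansion. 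This is more elementary and fully self-contained, but as you note it only yields the claim when $p$ is an integer; the lemma as stated (and its use inside \cref{dino_veg_symm} with exponent $p/2$) requires the non-integer case, which your final paragraph defers to precisely the kind of extremal characterization that \citeauthor{ibragimov2001best} supply. So your approach gives an instructive direct proof of the integer case while the paper's citation buys the full range of $p$ in one stroke.
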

\begin{proof}
Let $(V_i)_{i\ge 1}$ be an \iid sequence of Bernouilli random variables with 
\begin{talign}
\P(V_i=0)=1-(\frac{a^p}{b})^{\frac{1}{p-1}}
\qtext{and}
\P(V_i=1)=(\frac{a^p}{b})^{\frac{1}{p-1}}.
\end{talign}
    Then, by \citet[Thm.~2]{ibragimov2001best},
    \begin{talign}
    \E[(\sum_{i=1}^{n}\tilde X_i)^p]\le \E[(\sum_{i=1}^{n}(\frac{b}{a})^{1/(p-1)}V_i)^p]
   \le  (\frac{b}{a})^{p/(p-1)}\E[(\sum_{i=1}^{n}V_i)^p].
    \end{talign}
\end{proof}
\begin{lemma}[Rosenthal's inequality with explicit constants]\label{dino_veg2} \quad Let $(\tilde X_i)_{i\geq 1}$ be a sequence of centered \iid observations. If $\|\tilde X_1\|_p<\infty$ for some $p\ge 2$, then 
 \begin{talign}\|\frac{1}{\sqrt{n}}\sum_{i\le n}\tilde X_i\|_p\le (\frac{p}{2}+1)n^{1/p-1/2}\|\tilde X_1\|_p+2^{1/p}\sqrt{p/2+1}e^{\frac{1}{2}+\frac{1}{p}}\|\tilde X_1\|_2.\end{talign}
\end{lemma}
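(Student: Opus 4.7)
The plan is to establish this statement as a sharpened Rosenthal-type moment inequality for centered i.i.d.\ sums, with constants tracked explicitly. After multiplying through by $\sqrt{n}$, the claim reads
\begin{talign*}
\|\textstyle\sum_{i\le n}\tilde X_i\|_p \le (\tfrac{p}{2}+1) n^{1/p}\|\tilde X_1\|_p + 2^{1/p}\sqrt{\tfrac{p}{2}+1}\,e^{\frac12+\frac1p}\sqrt{n}\|\tilde X_1\|_2,
\end{talign*}
which is exactly the classical Rosenthal decomposition into an $\ell^p$-piece and a Gaussian piece. I would derive this in three stages: symmetrize to reduce to a Rademacher sum, apply Haagerup's sharp Khintchine constant, and then split the resulting square function into a deterministic variance term and a centered-square remainder.

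First, I would symmetrize. Introducing an independent copy $(\tilde X_i')$ and Rademacher multipliers $(\epsilon_i)$, the centering hypothesis and Jensen's inequality give $\|\sum\tilde X_i\|_p\le \|\sum(\tilde X_i-\tilde X_i')\|_p$, and the law-invariance $\sum(\tilde X_i-\tilde X_i')\eqdist\sum\epsilon_i(\tilde X_i-\tilde X_i')$ together with the triangle inequality yields $\|\sum\tilde X_i\|_p\le 2\|\sum\epsilon_i\tilde X_i\|_p$. Conditioning on $(\tilde X_i)$ and invoking Haagerup's inequality (as used in \cref{dino_veg_symm}) then gives
\begin{talign*}
\|\textstyle\sum\tilde X_i\|_p \le 2\sqrt{2}\bigl(\Gamma(\tfrac{p+1}{2})/\sqrt\pi\bigr)^{1/p}\bigl\|(\textstyle\sum\tilde X_i^2)^{1/2}\bigr\|_p.
\end{talign*}

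Second, I would split the square function. Writing $\sum\tilde X_i^2=n\mathbb{E}\tilde X_1^2+\sum(\tilde X_i^2-\mathbb{E}\tilde X_1^2)$ and using subadditivity $\sqrt{a+b}\le\sqrt a+\sqrt b$ for $a,b\ge 0$ together with $\|(\sum Y_i)^{1/2}\|_p=\|\sum Y_i\|_{p/2}^{1/2}$ for nonnegative $Y_i$, I obtain
\begin{talign*}
\bigl\|(\textstyle\sum\tilde X_i^2)^{1/2}\bigr\|_p\le \sqrt n\,\|\tilde X_1\|_2+\bigl\|\textstyle\sum(\tilde X_i^2-\mathbb{E}\tilde X_1^2)\bigr\|_{p/2}^{1/2}.
\end{talign*}
If $p\ge 4$, so that $p/2\ge 2$, the centered sum can be controlled by the Marcinkiewicz-Zygmund bound of \cref{dino_veg} applied at exponent $p/2$, followed by the triangle inequality $\|\tilde X_1^2-\mathbb{E}\tilde X_1^2\|_{p/2}\le 2\|\tilde X_1\|_p^2$. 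For $2\le p<4$ one instead uses the triangle inequality directly in $L^{p/2}$ (monotone in the exponent since $p/2\in[1,2]$). In both regimes the remainder is of order $n^{1/p}\|\tilde X_1\|_p$ with explicit constants.

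Finally, I would collect constants. Substituting the square-function bound back and simplifying $\Gamma((p+1)/2)^{1/p}$ via \citet[Thm.~1.5]{batir2008inequalities} (already invoked in \cref{proof-bb}), which gives $\Gamma(\tfrac{p+1}{2})^{1/p}\le\sqrt{2\pi}^{1/p}\sqrt{p/(2e)}$, one arrives at a bound of the required Rosenthal shape. Dividing by $\sqrt n$ yields the statement. The main obstacle, and the step where the plan is most fragile, is the constant bookkeeping in this last stage: landing exactly on the coefficients $(\tfrac{p}{2}+1)$ and $2^{1/p}\sqrt{\tfrac{p}{2}+1}\,e^{\frac12+\frac1p}$ requires a careful choice of how to trade factors between the Haagerup constant, the symmetrization factor of $2$, the subadditivity split, and the MZ constant at exponent $p/2$. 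An alternative cleaner route would be to invoke an existing explicit Rosenthal inequality (e.g.\ from Pinelis or Ibragimov-Sharakhmetov) and verify that its constants majorize those stated here after specialization to the i.i.d.\ centered case.
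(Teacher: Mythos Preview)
Your approach is genuinely different from the paper's, and it contains a real gap.

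\textbf{What the paper does.} The paper's proof is three lines: it invokes an explicit Rosenthal-type inequality of Nagaev (1978, Thm.~2), which for every $c>p/2$ gives
\begin{talign*}
\Bigl\|\frac{1}{\sqrt n}\sum_{i\le n}\tilde X_i\Bigr\|_p^p
\le c^p\,n^{1-p/2}\|\tilde X_1\|_p^p + p\,c^{p/2}e^c\,B\!\bigl(\tfrac p2,\,c-\tfrac p2\bigr)\|\tilde X_1\|_2^p,
\end{talign*}
sets $c=\tfrac p2+1$ so that $B(\tfrac p2,1)=\tfrac2p$, and then applies subadditivity of the $p$-th root. The stated constants drop out immediately.

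\textbf{The gap in your plan.} For $p\ge 4$ your claim that the centered-square remainder is ``of order $n^{1/p}\|\tilde X_1\|_p$'' is false with the tool you propose. Applying \cref{dino_veg} at exponent $p/2\ge 2$ to $\sum_i(\tilde X_i^2-\E\tilde X_1^2)$ yields
\begin{talign*}
\Bigl\|\sum_{i\le n}(\tilde X_i^2-\E\tilde X_1^2)\Bigr\|_{p/2}
\le \sqrt{n}\,\sqrt{\tfrac p2-1}\,\|\tilde X_1^2-\E\tilde X_1^2\|_{p/2},
\end{talign*}
so after the square root you obtain a factor $n^{1/4}$, not $n^{1/p}$. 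For $p>4$ this is strictly worse than the target and cannot be massaged into the stated form by constant bookkeeping. To recover the correct $n^{1/p}$ scaling from your Khintchine reduction you would need a Rosenthal-type bound on the \emph{nonnegative} sum $\sum_i\tilde X_i^2$ (giving $\|\sum\tilde X_i^2\|_{p/2}\lesssim n\|\tilde X_1\|_2^2 + n^{2/p}\|\tilde X_1\|_p^2$), which is essentially the result you are trying to prove; at that point the paper's direct citation of Nagaev is both simpler and delivers the exact constants.

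Even setting aside the $n$-power issue, the symmetrization factor of $2$ combined with Haagerup's constant will not land on the precise coefficients $(\tfrac p2+1)$ and $2^{1/p}\sqrt{\tfrac p2+1}\,e^{1/2+1/p}$; your own caveat about fragile constants is well-founded, but the $n$-scaling error is the primary obstruction.
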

\begin{proof}
    According to \citet[Thm.~2]{nagaev1978some} we have
    \begin{talign}
        \| \frac{1}{\sqrt{n}}\sum_{i\le n} \tilde X_i\|_p^p 
\le \inf_{c > \frac{p}{2}}  c^p n^{1-\frac{p}{2}} \| \tilde X_1 ||_p^p + p c^{p/2} e^c B(\frac{p}{2}, c-\frac{p}{2})\| \tilde X_1 \|_2^p,
    \end{talign} where $B(\cdot,\cdot)$ is the Beta function.
    The choice $c=\frac{p}{2}+1$ yields
    \begin{talign}
        \| \frac{1}{\sqrt{n}}\sum_{i\le n} \tilde X_i\|_p^p 
&\le (\frac{p}{2}+1)^p n^{1-\frac{p}{2}} \| \tilde X_1 ||_p^p + p (\frac{p}{2}+1)^{p/2} e^{\frac{p}{2}+1} B(\frac{p}{2}, 1)\| \tilde X_1 \|_2^p
\\&= (\frac{p}{2}+1)^p n^{1-\frac{p}{2}} || \tilde X_1 ||_p^p + 2 (\frac{p}{2}+1)^{\frac{p}{2}} e^{\frac{p}{2}+1}  || \tilde X_1 ||_2^p .
    \end{talign}
    The subadditivity of the $p$-th root now implies the result.
\end{proof}
 
\begin{lemma}[Kolmogorov-Smirnov quantile bound]\label{rm_sigma}
    Let $(X_i)_{i\geq 1}$ be a sequence of \iid random variables taking values in $[0,1]$ and, for a confidence level $\alpha>0$, let 
    $\ksqone[\alpha]$ and $\ksq[\alpha]$ respectively be the $1-\alpha$ quantiles of the one-sided and two-sided Kolmogorov-Smirnov (KS) distribution with parameter $n$. Then, 
    \begin{align}
        \P\Big(\sup_{k\in\naturals}\textstyle\frac{1}{n}\sum_{i\le n}X_i^k-\E[X_1^k]\ge \ksqone[\alpha] \Big) \vee
        \P\Big(\displaystyle\sup_{k\in\naturals}|\textstyle\frac{1}{n}\sum_{i\le n}X_i^k-\E[X_1^k]|\ge \ksq[\alpha] \Big)\le \alpha.
    \end{align}
\end{lemma}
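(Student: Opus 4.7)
The plan is to reduce the supremum over $k$ to the classical Kolmogorov--Smirnov deviation via a layer-cake representation of $X_i^k$. Since $X_i \in [0,1]$ almost surely, we can write $X_i^k = \int_0^1 k t^{k-1}\indic{X_i > t}\,dt$. Averaging and subtracting $\E[X_1^k]$ gives, by Fubini,
\begin{talign*}
\textstyle\frac{1}{n}\sum_{i\le n} X_i^k - \E[X_1^k]
= \int_0^1 k t^{k-1} \bigl(F(t) - F_n(t)\bigr)\,dt,
\end{talign*}
where $F_n(t) \defeq \frac{1}{n}\sum_{i\le n}\indic{X_i\le t}$ is the empirical CDF and $F(t)\defeq \P(X_1\le t)$ is its population counterpart.

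Because the weight $k t^{k-1}$ is nonnegative on $[0,1]$ and integrates to $1$, I would then deduce the two deterministic bounds
\begin{talign*}
\textstyle\sup_{k\in\naturals}\big(\frac{1}{n}\sum_{i\le n} X_i^k - \E[X_1^k]\big) &\le \sup_{t\in[0,1]} (F(t) - F_n(t)), \\
\textstyle\sup_{k\in\naturals}\big|\frac{1}{n}\sum_{i\le n} X_i^k - \E[X_1^k]\big| &\le \sup_{t\in[0,1]} |F(t) - F_n(t)|,
\end{talign*}
uniformly over $k\in\naturals$. These right-hand sides are precisely the one-sided and two-sided Kolmogorov--Smirnov statistics.

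To finish, I would invoke the standard fact that $\sup_t (F(t) - F_n(t))$ and $\sup_t |F(t) - F_n(t)|$ are stochastically dominated by the one- and two-sided KS distributions with parameter $n$ (with equality when $F$ is continuous; in general the distribution is obtained by composing with $F^{-1}$ so that $F(X_i)$ is uniformly distributed, up to ties that can only decrease the supremum). In particular, the $1-\alpha$ quantiles of the KS distributions, $\ksqone[\alpha]$ and $\ksq[\alpha]$, upper bound the corresponding quantiles of the left-hand side suprema. Combining with the bounds above immediately yields the advertised probability estimates.

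The only potentially subtle step is the justification of stochastic dominance by the KS distribution when $F$ is not assumed continuous; this is handled by the standard probability integral transform argument (ties only reduce the supremum of $|F_n - F|$), so no essential obstacle remains. The layer-cake identity is the key observation that makes the uniform-in-$k$ control possible.
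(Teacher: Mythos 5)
Your proof is correct and follows essentially the same route as the paper's. The layer-cake identity $X_i^k = \int_0^1 k t^{k-1}\indic{X_i > t}\,dt$ and the observation that $kt^{k-1}$ is a probability density on $[0,1]$ is precisely the integration-by-parts step the paper carries out explicitly for the one-sided case and delegates to \citet[Prop.~3.1]{romano2000finite} for the two-sided case. The one genuine (minor) difference is how the non-continuous case is handled: you invoke the standard probability-integral-transform/stochastic-dominance argument (that the KS statistic for arbitrary $F$ is stochastically dominated by the KS distribution), whereas the paper smooths the data by convolving with a small uniform noise $X_i^\epsilon = \frac{1}{1+\epsilon}(X_i + \epsilon U_i)$ and passes to the limit $\epsilon\to 0$. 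Your version is arguably cleaner because the stochastic-dominance fact is a single standard statement and avoids having to justify the $\epsilon\to 0$ limit of the perturbed moments; the paper's version is self-contained given the cited exact-distribution result of Kolmogorov/Feller for continuous $F$. Either is a valid way to close the gap, and the mathematical content is the same.
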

\begin{proof}
    We begin by proving the two-sided statement.
    Let $\mu$ be the distribution of $X_1$ and $\hat{\mu}$ the empirical distribution of $(X_i)_{i=1}^n$.
    \citet[Prop.~3.1]{romano2000finite} showed that
    \begin{talign}
    \sup_{k\in\naturals}|\frac{1}{n}\sum_{i\le n}X_i^k-\E[X_1^k]|\le \kd(\muhat,\mu)
    \defeq
    \sup_{x\in[0,1]} |\P(X_1 \leq x) - \P_{X\sim\muhat}(X \leq x)|
    \end{talign} 
    for $\kd(\muhat,\mu)$ the two-sided Kolmogorov distance between $\muhat$ and $\mu$. 
    Now let $\ks(n)$ denote the two-sided Kolmogorov-Smirnov distribution with parameter $n$.
    If $X_1$ is continuous, then $\kd(\muhat,\mu) \sim \ks(n)$ \citep[Thm.~1]{feller1948kolmogorov} and hence
    \begin{talign}\label{eq:two-sided-ksq}
        \P(\sup_{k\in\naturals}|\frac{1}{n}\sum_{i\le n}X_i^k-\E[X_1^k]|\ge q^{\rm{KS}}_{n}(\alpha) )\le \P(\kd(\muhat,\mu)\ge q^{\rm{KS}}_{n}(\alpha))\le \alpha.
    \end{talign}
    If $X_1$ is not continuous, fix any $\epsilon > 0$, and define 
    \begin{talign}X_i^{\epsilon}\defeq \frac{1}{1+\epsilon}(X_i+\epsilon U_i)
    \end{talign} 
    for $(U_i)_{i\geq 1}$  an \iid sequence of uniform random variables in $[0,1]$. 
    Since the $(X_i^{\epsilon})_{i\geq 1}$ are continuous \iid random variables on $[0,1]$, we have
    \begin{talign}
    \P(\sup_{k\in\naturals}|\frac{1}{n}\sum_{i\le n}(X_i^{\epsilon})^k-\E((X_1^{\epsilon})^k)|\ge q^{\rm{KS}}_{n}(\alpha))\le \alpha.\end{talign}
    As this holds for any arbitrary choice of $\epsilon>0$, the result \cref{eq:two-sided-ksq} holds.

    A nearly identical proof establishes the one-sided result, since, using integration by parts,
    \begin{talign}
    \frac{1}{n}\sum_{i\le n}X_i^k-\E[X_1^k]
        &= 
    \int_0^1 
        x^k d(\muhat-\mu)(x)
        =
    \int_0^1
        k x^{k-1} (\P_{X\sim\muhat}(X >x)-\P(X_1 > x))dx \\
        &\leq
        \kdone(\mu, \muhat)
    \int_0^1
        k x^{k-1} dx 
        =
    \kdone(\mu, \muhat)
    \end{talign}
    for $\kdone(\mu, \muhat) \defeq
    \sup_{x\in[0,1]} \P(X_1 \leq x) -\P_{X\sim\muhat}(X \leq x)$, the one-sided Kolmogorov distance.
\end{proof}
\end{appendix}

\bibliographystyle{imsart-nameyear} %
\bibliography{refs} 

\end{document}